\documentclass[11pt]{article}
\usepackage[margin=1in]{geometry}
\usepackage[T1]{fontenc}
\usepackage[utf8]{inputenc}
\usepackage[all=normal,bibliography=tight]{savetrees}
\usepackage{tikz}
\usetikzlibrary{decorations.pathmorphing}
\tikzset{snake it/.style={decorate, decoration=snake}}
\usepackage{listings}
\usepackage{cite}
\usepackage{mathrsfs}
\usepackage{amstext,amsfonts,amsthm,amsmath,amssymb}
\usepackage{graphicx,tikz}
\usepackage{comment}
\usepackage{url}
\usepackage{xspace}
\usepackage{todonotes}
\usepackage[shortlabels]{enumitem}
\usepackage[absolute]{textpos}

\def\cqedsymbol{\ifmmode$\lrcorner$\else{\unskip\nobreak\hfil
\penalty50\hskip1em\null\nobreak\hfil$\lrcorner$
\parfillskip=0pt\finalhyphendemerits=0\endgraf}\fi}

\newcommand{\C}{\mathcal{C}}

\newcommand{\dist}{\text{dist}}
\newcommand{\N}{\mathbb{N}}

\newtheorem{lemma}{Lemma}[section]

\newtheorem{corollary}[lemma]{Corollary}
\newtheorem{theorem}[lemma]{Theorem}

\newtheorem{conjecture}[lemma]{Conjecture}
\theoremstyle{definition}

\def\dd{\hbox{-}}

\newcommand*\samethanks[1][\value{footnote}]{\footnotemark[#1]}

\title{Graphs with polynomially many minimal separators}
\author{
Tara Abrishami$^{1}$ \ \ \ \
Maria Chudnovsky$^{1,}$\thanks{Supported by NSF Grant DMS-1763817. This material is based upon work supported by, or in part by, the U.S. Army Research Laboratory and the U. S. Army Research Office under grant number W911NF-16-1-0404.} \ \ \ \
Cemil Dibek$^{1,}$\samethanks \ \ \ \
\\
\\
St\'ephan Thomass\'e$^{2,}$\thanks{Partially supported by the LABEX MILYON (ANR-10-LABX-0070) of Universit\'e de Lyon, within the program Investissements d’Avenir (ANR-11-IDEX-0007) operated by the French National Research Agency (ANR), and by the French National Research Agency under research grant ANR DIGRAPHS ANR-19-CE48-0013-01.} \ \ \ \
Nicolas Trotignon$^{2,}$\samethanks \ \ \ \
Kristina Vu\v{s}kovi\'c$^{3,}$\thanks{Partially supported by EPSRC grant EP/N0196660/1.}\\\\
 $^1${\small Princeton University, Princeton, NJ 08544}\\
 $^2${\small Univ Lyon, EnsL, UCBL, CNRS, LIP, F-69342, LYON Cedex 07, France}\\
\small $^3$School of Computing, University of Leeds, UK
\\
\\
}
\date{}

\begin{document}\maketitle

\begin{abstract}
We show that graphs that do not contain a theta, pyramid, prism, or turtle as an induced subgraph have polynomially many minimal separators. This result is the best possible in the sense that there are graphs with exponentially many minimal separators if only three of the four induced subgraphs are excluded. As a consequence, there is a polynomial time algorithm to solve the maximum weight independent set problem for the class of (theta, pyramid, prism, turtle)-free graphs. Since every prism, theta, and turtle contains an even hole, this also implies a polynomial time algorithm to solve the maximum weight independent set problem for the class of (pyramid, even hole)-free graphs.\\

\noindent \textit{\textbf{Keywords:} Minimal separator, induced subgraph, theta, pyramid, prism, turtle.}
\end{abstract}

\section{Introduction}\label{sec:intro}
All graphs in this paper are finite and simple. Let $G = (V, E)$ be a graph. A set $C \subseteq V(G)$ is a \emph{minimal separator} of $G$ if there are two distinct connected components $L, R$ of $G \setminus C$ such that $N(L) = N(R) = C$. A class $\mathcal{G}$ of graphs is said to have the \emph{polynomial separator property} if there exists a constant $c$ such that every graph $G \in \mathcal{G}$ has at most $|V(G)|^{c}$ minimal separators. 

The polynomial separator property has proven to be a desirable property due to its connection with potential maximal cliques and the maximum weight treewidth $k$ induced subgraph problem. Given a graph $G$, a nonnegative weight function on $V(G)$, and an integer $k$, the \textsc{maximum weight treewidth $k$ induced subgraph} problem (\textsc{MWT$k$ISg}) asks for a maximum-weight induced subgraph of $G$ of treewidth less than $k$. The \textsc{maximum weight independent set} problem (\textsc{MWIS}), which asks for an independent set of $G$ with maximum weight, and the \textsc{feedback vertex set} problem (\textsc{FVS}), which asks for a minimum-size set $X \subseteq V(G)$ such that $G \setminus X$ is a forest, are special cases of \textsc{MWT$k$ISg} when $k = 1$ and $k = 2$, respectively. Recently, significant progress was made regarding the complexity of \textsc{MWIS} in various graph classes using potential maximal cliques, originally developed by Bouchitté and Todinca \cite{BouTod0, BouTod}. A milestone result with this approach was obtained in 2014 by Lokshtanov, Vatshelle, and Villanger \cite{LVV}, who designed a polynomial-time algorithm for \textsc{MWIS} in $P_5$-free graphs. Later, using the same framework, Grzesik et al. \cite{GKPP} provided a polynomial-time algorithm for \textsc{MWIS} in $P_6$-free graphs. More recently, Abrishami et al. \cite{ACPRS} extended the framework of potential maximal cliques to \textsc{MWT$k$ISg}, and gave a polynomial-time algorithm for \textsc{MWIS} in graphs with no induced cycle of length five or greater, and for \textsc{FVS} in $P_5$-free graphs.

Given an integer $k$, it is known that \textsc{MWT$k$ISg} can be solved in polynomial time for graphs that have polynomially many potential maximal cliques. Minimal separators are closely related to potential maximal cliques: it was shown in \cite{BouTod} that a graph has polynomially many potential maximal cliques if and only if it has polynomially many minimal separators. Consequently, \textsc{MWT$k$ISg} is polynomial-time solvable in any class of graphs that has the polynomial separator property. It is therefore interesting to find classes of graphs where the number of minimal separators is bounded by a polynomial. We now define four graphs of interest to us (see also Figure \ref{fig:forbidden_isgs}):
\begin{itemize}
\itemsep-0.2em
\item A \emph{theta} is a graph $G$ consisting of two nonadjacent vertices $a, b$ and three paths $P_1, P_2, P_3$, each from $a$ to $b$, and otherwise vertex-disjoint, such that for $1 \leq i < j \leq 3$, $V(P_i) \cup V(P_j)$ induces a hole in $G$. In particular, each of $P_1, P_2, P_3$ has at least two edges. We say that $G$ is a theta between $a$ and $b$.

\item A \emph{pyramid} is a graph $G$ consisting of a vertex $a$ and a triangle $\{b_1, b_2, b_3\}$, and three paths $P_1, P_2, P_3$, such that: $P_i$ is between $a$ and $b_i$ for $i = 1, 2, 3$; for $1 \leq i < j \leq 3$, $P_i, P_j$ are vertex-disjoint except for $a$ and $V(P_i) \cup V(P_j)$ induces a hole in $G$; and in particular at most one of $P_1, P_2, P_3$ has only one edge. We say that $G$ is a pyramid from $a$ to $b_1b_2b_3$.
    
\item A \emph{prism} is a graph $G$ consisting of two vertex-disjoint triangles $\{a_1, a_2, a_3\}$, $\{b_1, b_2, b_3\}$, and three paths $P_1, P_2, P_3$, pairwise vertex-disjoint, where each $P_i$ has ends $a_i, b_i$, and for $1 \leq i < j \leq 3$, $V(P_i) \cup V(P_j)$ induces a hole in $G$. In particular, each of $P_1, P_2, P_3$ has at least one edge. We say $G$ is a prism between $a_1a_2a_3$ and $b_1b_2b_3$.

\item A \emph{turtle} is a graph $G$ consisting of two vertex-disjoint paths $P_1, P_2$ and two adjacent vertices $x, y \in V(G) \setminus (V(P_1) \cup V(P_2))$ such that for $i=1,2$, $P_i$ is from $a_i$ to $b_i$, $a_1$ is adjacent to $a_2$, $b_1$ is adjacent to $b_2$, $V(P_1) \cup V(P_2)$ induces a hole in $G$, $x$ has at least three neighbors in $P_1$ and no neighbors in $P_2$, and $y$ has at least three neighbors in $P_2$ and no neighbors in $P_1$. We say that $G$ is an \emph{$xy$-turtle} where we call $x$ and $y$ the \emph{centers} of $G$.

\end{itemize}

\vspace{-0.4cm}

\begin{figure}[ht]
\begin{center}
\begin{tikzpicture}[scale=0.29]

\node[inner sep=2.5pt, fill=black, circle] at (0,0)(v1){}; 
\node[inner sep=2.5pt, fill=black, circle] at (3, 3)(v2){};
\node[inner sep=2.5pt, fill=black, circle] at (3, 0)(v3){}; 
\node[inner sep=2.5pt, fill=black, circle] at (3, -3)(v4){}; 
\node[inner sep=2.5pt, fill=black, circle] at (6, 0)(v5){};

\node[inner sep=2.5pt, fill=white, circle] at (0, -4.8)(v21){};

\draw[black, thick] (v1) -- (v2);
\draw[black, thick] (v1) -- (v3);
\draw[black, thick] (v1) -- (v4);
\draw[black, dotted, thick] (v2) -- (v5);
\draw[black, dotted, thick] (v3) -- (v5);
\draw[black, dotted, thick] (v4) -- (v5);

\end{tikzpicture}
\hspace{0.7cm}
\begin{tikzpicture}[scale=0.29]

\node[inner sep=2.5pt, fill=black, circle] at (0,0)(v1){}; 
\node[inner sep=2.5pt, fill=black, circle] at (3, 3)(v2){}; 
\node[inner sep=2.5pt, fill=black, circle] at (3, -3)(v3){}; 
\node[inner sep=2.5pt, fill=black, circle] at (6, 0)(v4){};
\node[inner sep=2.5pt, fill=black, circle] at (9, 3)(v5){};
\node[inner sep=2.5pt, fill=black, circle] at (9, -3)(v6){}; 

\node[inner sep=2.5pt, fill=white, circle] at (0, -4.8)(v21){}; 

\draw[black, thick] (v1) -- (v2);
\draw[black, thick] (v1) -- (v3);
\draw[black, dotted, thick] (v1) -- (v4);
\draw[black, dotted, thick] (v2) -- (v5);
\draw[black, dotted, thick] (v3) -- (v6);
\draw[black, thick] (v4) -- (v5);
\draw[black, thick] (v4) -- (v6);
\draw[black, thick] (v5) -- (v6);

\end{tikzpicture}
\hspace{0.7cm}
\begin{tikzpicture}[scale=0.29]

\node[inner sep=2.5pt, fill=black, circle] at (-3, 3)(v1){}; 
\node[inner sep=2.5pt, fill=black, circle] at (0, 0)(v2){}; 
\node[inner sep=2.5pt, fill=black, circle] at (-3, -3)(v3){}; 
\node[inner sep=2.5pt, fill=black, circle] at (6, 3)(v4){}; 
\node[inner sep=2.5pt, fill=black, circle] at (3, 0)(v5){}; 
\node[inner sep=2.5pt, fill=black, circle] at (6, -3)(v6){};

\node[inner sep=2.5pt, fill=white, circle] at (0, -4.8)(v21){}; 

\draw[black, thick] (v1) -- (v2);
\draw[black, thick] (v1) -- (v3);
\draw[black, thick] (v2) -- (v3);
\draw[black, dotted, thick] (v1) -- (v4);
\draw[black, dotted, thick] (v2) -- (v5);
\draw[black, dotted, thick] (v3) -- (v6);
\draw[black, thick] (v4) -- (v5);
\draw[black, thick] (v4) -- (v6);
\draw[black, thick] (v5) -- (v6);

\end{tikzpicture}
\hspace{0.7cm}
\begin{tikzpicture}[scale=0.30]

\node[label=above:{$\scriptstyle{a_1}$}, inner sep=2.5pt, fill=black, circle] at (-0.7, 3.5)(v1){}; 
\node[label=below:{$\scriptstyle{b_1}$}, inner sep=2.5pt, fill=black, circle] at (-0.7, -3.5)(v2){};
\node[label=above:{$\scriptstyle{a_2}$}, inner sep=2.5pt, fill=black, circle] at (0.7, 3.5)(v1){}; 
\node[label=below:{$\scriptstyle{b_2}$}, inner sep=2.5pt, fill=black, circle] at (0.7, -3.5)(v2){};
\node[label=below:{$\scriptstyle{x}$}, inner sep=2.5pt, fill=black, circle] at (-1.2, 0)(v3){}; 
\node[label=below:{$\scriptstyle{y}$}, inner sep=2.5pt, fill=black, circle] at (1.2, 0)(v4){};
\node[inner sep=2.5pt, fill=black, circle] at (-3.6, 0)(v5){}; 
\node[inner sep=2.5pt, fill=black, circle] at (-3.3, 1.5)(v6){};
\node[inner sep=2.5pt, fill=black, circle] at (-3.3, -1.5)(v7){}; 
\node[inner sep=2.5pt, fill=black, circle] at (3.6, 0)(v8){}; 
\node[inner sep=2.5pt, fill=black, circle] at (3.3, 1.5)(v9){}; 
\node[inner sep=2.5pt, fill=black, circle] at (3.3, -1.5)(v10){}; 

\draw[black, thick] (v3) -- (v4);
\draw[black, thick] (v3) -- (v5);
\draw[black, thick] (v3) -- (v6);
\draw[black, thick] (v3) -- (v7);
\draw[black, thick] (v4) -- (v8);
\draw[black, thick] (v4) -- (v9);
\draw[black, thick] (v4) -- (v10);

\draw[black, thick] (0,0) circle (3.6cm);

\end{tikzpicture}
\end{center}
\vspace{-0.75cm}
\caption{Theta, pyramid, prism, and turtle}
\label{fig:forbidden_isgs}
\end{figure}
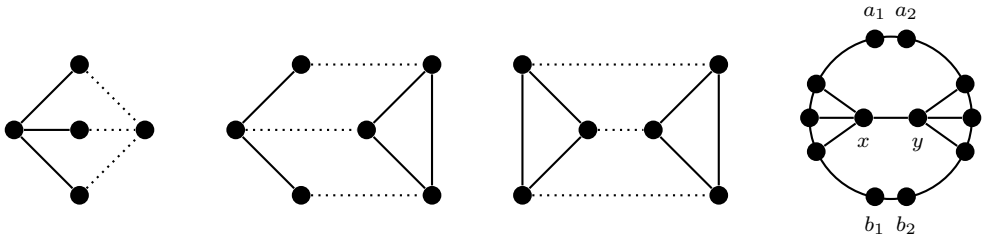

Thetas, pyramids, prisms, and turtles are interesting because they provide examples of graphs with exponentially many minimal separators. Specifically, we have the following examples of graphs with exponentially many minimal separators (see also Figure \ref{fig:k_forbidden}).
\begin{itemize}
\itemsep-0.2em
\item A \emph{$k$-theta} is a graph $G$ with vertex set $V(G) = \{a, a_1, \dots, a_k, b, b_1, \dots, b_k\}$, and its set of edges consists of the pairs of the following form: $aa_i$, $bb_i$, and $a_ib_i$ for $1 \leq i \leq k$.

\item A \emph{$k$-pyramid} is a graph $G$ with vertex set $V(G) = \{a, a_1, \dots, a_k, b_1, \dots, b_k\}$, and its set of edges consists of the pairs of the following form: $aa_i$ and $a_ib_i$ for $1 \leq i \leq k$, and $b_ib_j$ for $1 \leq i < j \leq k$.

\item A \emph{$k$-prism} is a graph $G$ consisting of two cliques of size $k$ and a $k$-edge matching between them. More precisely, $V(G)=\{a_1, \dots, a_k, b_1, \dots, b_k\}$, each of the sets $\{a_1, \dots, a_k\}$ and $\{b_1, \dots, b_k\}$ is a clique, and $a_ib_i \in E(G)$ for $1 \leq i \leq k$, and there are no other edges in $G$.

\item A \emph{$k$-turtle} is a graph $G$ with two non-adjacent vertices $a, b \in V(G)$, two paths $P_1$ and $P_2$ from $a$ to $b$, vertex-disjoint except for $a$ and $b$, such that $V(P_1) \cup V(P_2)$ induces a hole $H$ in $G$. Also, for $1 \leq i \leq k$, $x_i, y_i \in V(G) \setminus V(H)$ such that $x_iy_i \in E(G)$, and $x_i$ has at least three neighbors in $P_1$ and no neighbors in $P_2$, and $y_i$ has at least three neighbors in $P_2$ and no neighbors in $P_1$. Furthermore, the neighbors of $x_i$'s in $P_1$ and the neighbors of $y_i$'s in $P_2$ are nested along $P_1$ and $P_2$ as shown in Figure \ref{fig:k_forbidden}. Specifically, the neighbors of $x_i$ in $P_1$ are between $a$ and the neighbors of $x_j$ in $P_1$ for all $1 \leq i < j \leq k$ and the neighbors of $y_i$ in $P_2$ are between $a$ and the neighbors of $y_j$ in $P_2$ for all $1 \leq i < j \leq k$.
\end{itemize}

\vspace{-0.4cm}

\begin{figure}[ht]
\begin{center}
\begin{tikzpicture}[scale=0.33]

\node[inner sep=2.5pt, fill=black, circle] at (0,0)(v1){}; 
\node[inner sep=2.5pt, fill=black, circle] at (2, 4)(v2){};
\node[inner sep=2.5pt, fill=black, circle] at (2, 2)(v3){}; 
\node[inner sep=2.5pt, fill=black, circle] at (2, -2)(v4){}; 
\node[inner sep=2.5pt, fill=black, circle] at (2, -4)(v5){};
\node[inner sep=2.5pt, fill=black, circle] at (7, 0)(v6){}; 
\node[inner sep=2.5pt, fill=black, circle] at (5, 4)(v7){};
\node[inner sep=2.5pt, fill=black, circle] at (5, 2)(v8){};
\node[inner sep=2.5pt, fill=black, circle] at (5, -2)(v9){};
\node[inner sep=2.5pt, fill=black, circle] at (5, -4)(v10){};

\node[inner sep=2.5pt, fill=white, circle] at (0, -6.5)(v21){};

\draw[black, thick] (v1) -- (v2);
\draw[black, thick] (v1) -- (v3);
\draw[black, thick] (v1) -- (v4);
\draw[black, thick] (v1) -- (v5);
\draw[black, thick] (v6) -- (v7);
\draw[black, thick] (v6) -- (v8);
\draw[black, thick] (v6) -- (v9);
\draw[black, thick] (v6) -- (v10);
\draw[black, thick] (v2) -- (v7);
\draw[black, thick] (v3) -- (v8);
\draw[black, thick] (v4) -- (v9);
\draw[black, thick] (v5) -- (v10);

\node at (3.5,0.5) {\vdots};

\end{tikzpicture}
\hspace{0.4cm}
\begin{tikzpicture}[scale=0.33]

\node[inner sep=2.5pt, fill=black, circle] at (0,0)(v1){}; 
\node[inner sep=2.5pt, fill=black, circle] at (2, 4)(v2){};
\node[inner sep=2.5pt, fill=black, circle] at (2, 2)(v3){}; 
\node[inner sep=2.5pt, fill=black, circle] at (2, -2)(v4){}; 
\node[inner sep=2.5pt, fill=black, circle] at (2, -4)(v5){};
\node[inner sep=2.5pt, fill=black, circle] at (6, 4)(v7){};
\node[inner sep=2.5pt, fill=black, circle] at (6, 2)(v8){};
\node[inner sep=2.5pt, fill=black, circle] at (6, -2)(v9){};
\node[inner sep=2.5pt, fill=black, circle] at (6, -4)(v10){};

\node[inner sep=2.5pt, fill=white, circle] at (0, -6.5)(v21){};

\draw[black, thick] (v1) -- (v2);
\draw[black, thick] (v1) -- (v3);
\draw[black, thick] (v1) -- (v4);
\draw[black, thick] (v1) -- (v5);
\draw[black, thick] (v2) -- (v7);
\draw[black, thick] (v3) -- (v8);
\draw[black, thick] (v4) -- (v9);
\draw[black, thick] (v5) -- (v10);

\draw[black, thick] (v7) -- (v8);
\draw[black, thick] (v8) -- (v9);
\draw[black, thick] (v9) -- (v10);

\draw[black, thick](v7) edge [bend left=30] (v9);
\draw[black, thick](v7) edge [bend left=45] (v10);
\draw[black, thick](v8) edge [bend left=30] (v10);

\node at (4,0.5) {\vdots};

\end{tikzpicture}
\hspace{0.23cm}
\begin{tikzpicture}[scale=0.33]

\node[inner sep=2.5pt, fill=black, circle] at (2, 4)(v2){};
\node[inner sep=2.5pt, fill=black, circle] at (2, 2)(v3){}; 
\node[inner sep=2.5pt, fill=black, circle] at (2, -2)(v4){}; 
\node[inner sep=2.5pt, fill=black, circle] at (2, -4)(v5){};
\node[inner sep=2.5pt, fill=black, circle] at (6, 4)(v7){};
\node[inner sep=2.5pt, fill=black, circle] at (6, 2)(v8){};
\node[inner sep=2.5pt, fill=black, circle] at (6, -2)(v9){};
\node[inner sep=2.5pt, fill=black, circle] at (6, -4)(v10){};

\node[inner sep=2.5pt, fill=white, circle] at (0, -6.5)(v21){};

\draw[black, thick] (v2) -- (v7);
\draw[black, thick] (v3) -- (v8);
\draw[black, thick] (v4) -- (v9);
\draw[black, thick] (v5) -- (v10);

\draw[black, thick] (v2) -- (v3);
\draw[black, thick] (v3) -- (v4);
\draw[black, thick] (v4) -- (v5);

\draw[black, thick](v2) edge [bend right=30] (v4);
\draw[black, thick](v2) edge [bend right=45] (v5);
\draw[black, thick](v3) edge [bend right=30] (v5);

\draw[black, thick] (v7) -- (v8);
\draw[black, thick] (v8) -- (v9);
\draw[black, thick] (v9) -- (v10);

\draw[black, thick](v7) edge [bend left=30] (v9);
\draw[black, thick](v7) edge [bend left=45] (v10);
\draw[black, thick](v8) edge [bend left=30] (v10);

\node at (4,0.5) {\vdots};

\end{tikzpicture}
\hspace{0.23cm}
\begin{tikzpicture}[scale=0.32]

\node[label=above:{$\scriptstyle{a}$}, inner sep=2.5pt, fill=black, circle] at (0, 6)(v1){}; 
\node[label=below:{$\scriptstyle{b}$}, inner sep=2.5pt, fill=black, circle] at (0, -6)(v2){};
\node[inner sep=2.5pt, fill=black, circle] at (-1, 4)(v3){}; 
\node[inner sep=2.5pt, fill=black, circle] at (1, 4)(v4){};
\node[inner sep=2.5pt, fill=black, circle] at (-1, 2)(v5){}; 
\node[inner sep=2.5pt, fill=black, circle] at (1, 2)(v6){};
\node[inner sep=2.5pt, fill=black, circle] at (-1, -2)(v7){}; 
\node[inner sep=2.5pt, fill=black, circle] at (1, -2)(v8){};
\node[inner sep=2.5pt, fill=black, circle] at (-1, -4)(v9){}; 
\node[inner sep=2.5pt, fill=black, circle] at (1, -4)(v10){};

\node[inner sep=2.5pt, fill=black, circle] at (-2, 5)(v11){}; 
\node[inner sep=2.5pt, fill=black, circle] at (-2.5, 4.25)(v12){};
\node[inner sep=2.5pt, fill=black, circle] at (-3, 3.5)(v13){}; 
\node[inner sep=2.5pt, fill=black, circle] at (2, 5)(v14){};
\node[inner sep=2.5pt, fill=black, circle] at (2.5, 4.25)(v15){}; 
\node[inner sep=2.5pt, fill=black, circle] at (3, 3.5)(v16){};

\node[inner sep=2.5pt, fill=black, circle] at (-3.25, 2.5)(v17){}; 
\node[inner sep=2.5pt, fill=black, circle] at (-3.5, 1.75)(v18){};
\node[inner sep=2.5pt, fill=black, circle] at (-3.65, 1)(v19){}; 
\node[inner sep=2.5pt, fill=black, circle] at (3.25, 2.5)(v20){};
\node[inner sep=2.5pt, fill=black, circle] at (3.5, 1.75)(v21){}; 
\node[inner sep=2.5pt, fill=black, circle] at (3.65, 1)(v22){};

\node[inner sep=2.5pt, fill=black, circle] at (-3.25, -2.5)(v23){}; 
\node[inner sep=2.5pt, fill=black, circle] at (-3.5, -1.75)(v24){};
\node[inner sep=2.5pt, fill=black, circle] at (-3.65, -1)(v25){}; 
\node[inner sep=2.5pt, fill=black, circle] at (3.25, -2.5)(v26){};
\node[inner sep=2.5pt, fill=black, circle] at (3.5, -1.75)(v27){}; 
\node[inner sep=2.5pt, fill=black, circle] at (3.65, -1)(v28){};

\node[inner sep=2.5pt, fill=black, circle] at (-2, -5)(v29){}; 
\node[inner sep=2.5pt, fill=black, circle] at (-2.5, -4.25)(v30){};
\node[inner sep=2.5pt, fill=black, circle] at (-3, -3.5)(v31){}; 
\node[inner sep=2.5pt, fill=black, circle] at (2, -5)(v32){};
\node[inner sep=2.5pt, fill=black, circle] at (2.5, -4.25)(v33){}; 
\node[inner sep=2.5pt, fill=black, circle] at (3, -3.5)(v34){};

\draw[black, thick](v1) edge [bend right=75] (v2);
\draw[black, thick](v1) edge [bend left=75] (v2);

\draw[black, thick] (v3) -- (v4);
\draw[black, thick] (v5) -- (v6);
\draw[black, thick] (v7) -- (v8);
\draw[black, thick] (v9) -- (v10);

\draw[black, thick] (v3) -- (v11);
\draw[black, thick] (v3) -- (v12);
\draw[black, thick] (v3) -- (v13);
\draw[black, thick] (v4) -- (v14);
\draw[black, thick] (v4) -- (v15);
\draw[black, thick] (v4) -- (v16);

\draw[black, thick] (v5) -- (v17);
\draw[black, thick] (v5) -- (v18);
\draw[black, thick] (v5) -- (v19);
\draw[black, thick] (v6) -- (v20);
\draw[black, thick] (v6) -- (v21);
\draw[black, thick] (v6) -- (v22);

\draw[black, thick] (v7) -- (v23);
\draw[black, thick] (v7) -- (v24);
\draw[black, thick] (v7) -- (v25);
\draw[black, thick] (v8) -- (v26);
\draw[black, thick] (v8) -- (v27);
\draw[black, thick] (v8) -- (v28);

\draw[black, thick] (v9) -- (v29);
\draw[black, thick] (v9) -- (v30);
\draw[black, thick] (v9) -- (v31);
\draw[black, thick] (v10) -- (v32);
\draw[black, thick] (v10) -- (v33);
\draw[black, thick] (v10) -- (v34);

\node at (0,0.5) {\vdots};

\end{tikzpicture}
\hspace{0.42cm}
\begin{tikzpicture}[scale=0.3]

\node[inner sep=2.5pt, fill=black, circle] at (0, 0)(v1){};
\node[inner sep=2.5pt, fill=black, circle] at (2.5, 0)(v2){};
\node[inner sep=2.5pt, fill=black, circle] at (6, 0)(v3){};

\node[inner sep=2.5pt, fill=black, circle] at (0, 2)(v4){}; 
\node[inner sep=2.5pt, fill=black, circle] at (2.5, 2)(v5){};
\node[inner sep=2.5pt, fill=black, circle] at (4.5, 2)(v6){};
\node[inner sep=2.5pt, fill=black, circle] at (6, 1.2)(v7){};
\node[inner sep=2.5pt, fill=black, circle] at (6, 2.8)(v8){};

\node[inner sep=2.5pt, fill=black, circle] at (0, 3.5)(v9){};
\node[inner sep=2.5pt, fill=black, circle] at (0, 5)(v10){};
\node[inner sep=2.5pt, fill=black, circle] at (2.5, 5)(v11){};
\node[inner sep=2.5pt, fill=black, circle] at (4.5, 5)(v12){};
\node[inner sep=2.5pt, fill=black, circle] at (6, 4.2)(v13){}; 
\node[inner sep=2.5pt, fill=black, circle] at (6, 5.8)(v14){}; 

\node[inner sep=2.5pt, fill=black, circle] at (0, 9)(v15){};
\node[inner sep=2.5pt, fill=black, circle] at (2.5, 9)(v16){};
\node[inner sep=2.5pt, fill=black, circle] at (4.5, 9)(v17){};
\node[inner sep=2.5pt, fill=black, circle] at (6, 8.2)(v18){};
\node[inner sep=2.5pt, fill=black, circle] at (6, 9.8)(v19){};

\node[inner sep=2.5pt, fill=black, circle] at (0, 10.5)(v20){};
\node[inner sep=2.5pt, fill=black, circle] at (0, 12)(v21){};
\node[inner sep=2.5pt, fill=black, circle] at (2.5, 12)(v22){};
\node[inner sep=2.5pt, fill=black, circle] at (4.5, 12)(v23){}; 
\node[inner sep=2.5pt, fill=black, circle] at (6, 11.2)(v24){}; 
\node[inner sep=2.5pt, fill=black, circle] at (6, 12.8)(v25){};

\node[inner sep=2.5pt, fill=black, circle] at (0, 14)(v26){};
\node[inner sep=2.5pt, fill=black, circle] at (2.5, 14)(v27){};
\node[inner sep=2.5pt, fill=black, circle] at (6, 14)(v28){};

\node[inner sep=2.5pt, fill=white, circle] at (1, -0.7)(v30){};

\draw[black, thick] (v1) -- (v2);
\draw[black, thick] (v2) -- (v3);
\draw[black, thick] (v1) -- (v4);
\draw[black, thick] (v4) -- (v5);
\draw[black, thick] (v5) -- (v6);
\draw[black, thick] (v6) -- (v7);
\draw[black, thick] (v6) -- (v8);
\draw[black, thick] (v7) -- (v8);
\draw[black, thick] (v3) -- (v7);
\draw[black, thick] (v4) -- (v9);
\draw[black, thick] (v9) -- (v10);
\draw[black, thick] (v10) -- (v11);
\draw[black, thick] (v11) -- (v12);
\draw[black, thick] (v12) -- (v13);
\draw[black, thick] (v12) -- (v14);
\draw[black, thick] (v13) -- (v14);
\draw[black, thick] (v8) -- (v13);
\draw[black, thick] (v10) -- (v15);
\draw[black, thick] (v15) -- (v16);
\draw[black, thick] (v16) -- (v17);
\draw[black, thick] (v17) -- (v18);
\draw[black, thick] (v17) -- (v19);
\draw[black, thick] (v18) -- (v19);
\draw[black, thick] (v14) -- (v18);
\draw[black, thick] (v15) -- (v20);
\draw[black, thick] (v20) -- (v21);
\draw[black, thick] (v21) -- (v22);
\draw[black, thick] (v22) -- (v23);
\draw[black, thick] (v23) -- (v24);
\draw[black, thick] (v23) -- (v25);
\draw[black, thick] (v24) -- (v25);
\draw[black, thick] (v19) -- (v24);
\draw[black, thick] (v21) -- (v26);
\draw[black, thick] (v26) -- (v27);
\draw[black, thick] (v27) -- (v28);
\draw[black, thick] (v25) -- (v28);

\node at (2.5,7.5) {\vdots};

\end{tikzpicture}
\end{center}
\vspace{-0.6cm}
\caption{$k$-theta, $k$-pyramid, $k$-prism, $k$-turtle, and $k$-ladder}
\label{fig:k_forbidden}
\end{figure}
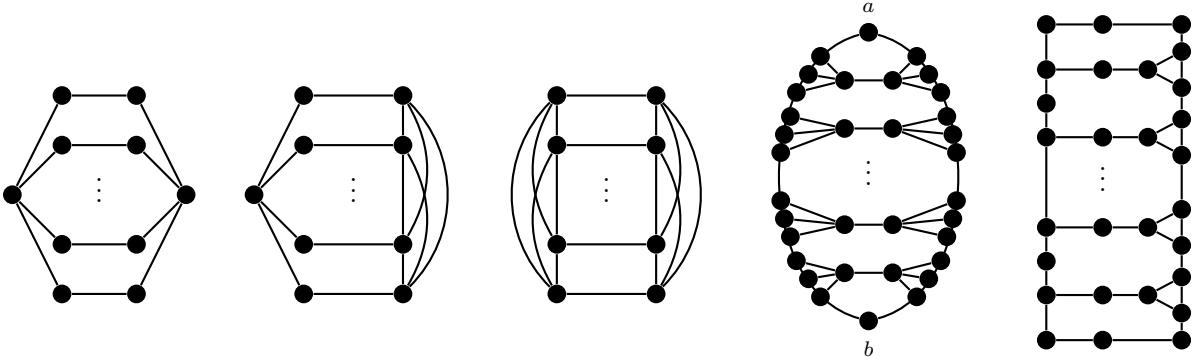

There are other examples of graphs with exponentially many minimal separators, such as the $k$-ladder shown in Figure \ref{fig:k_forbidden}. The $k$-ladder also contains a pyramid. In view of these examples, it is natural to ask whether excluding theta, pyramid, prism, and turtle in a graph is enough to obtain a polynomial number of minimal separators. This was conjectured in \cite{CTTV}:

\begin{conjecture}[\hspace{1sp}\cite{CTTV}]
There is a polynomial $P$ such that every graph $G$ that contains no theta, pyramid, prism, or turtle has at most $P(|V(G)|)$ minimal separators.
\label{conj:main_conj}
\end{conjecture}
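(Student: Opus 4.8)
The plan is to attach to every minimal separator $C$ of $G$ a \emph{certificate}: a set of at most $c$ vertices, for some absolute constant $c$, from which $C$ can be reconstructed, so that boundedly many minimal separators share a certificate. Since there are at most $|V(G)|^{c}$ possible certificates, this gives the desired bound. As a preliminary step I would reduce to connected graphs with no clique cutset: if $K$ is a clique cutset and $G=G_1\cup G_2$ with $V(G_1)\cap V(G_2)=V(K)$, then both $G_i$ are again (theta, pyramid, prism, turtle)-free, and the minimal separators of $G$ can be described in terms of those of $G_1$, those of $G_2$, and $V(K)$ itself; since the clique-cutset decomposition has $O(|V(G)|)$ atoms, a polynomial bound for atoms gives one for $G$. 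In a graph with no clique cutset no minimal separator is a clique, so every minimal separator $C$ --- which by definition comes with components $L,R$ of $G\setminus C$ satisfying $N(L)=N(R)=C$ --- contains nonadjacent vertices $u,v$; taking a shortest $u$-$v$ path $P_L$ with interior in $L$ and a shortest $u$-$v$ path $P_R$ with interior in $R$, the union $P_L\cup P_R$ is an induced hole through $u$ and $v$. So every minimal separator is \emph{spread along} holes of $G$.

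The core of the proof is a structural lemma controlling how $C$ meets such holes. Fixing a component $L$ with $N(L)=C$: since $G[L]$ is connected, for every $c\in C$ there is an induced path with interior in $L$ from $c$ to a prescribed vertex of $L$, and splicing such paths onto the arcs of the holes built above produces, for prescribed subsets of $C$, induced paths and holes on which the chosen vertices of $C$ have controlled attachments. Here the hypotheses are used. Excluding theta, pyramid, and prism forces the attachment of a single vertex, or of an edge, to a hole to be of one of boundedly many simple types (for example, a vertex whose neighbours on a hole are exactly two nonadjacent vertices already creates a theta), which caps the local complexity of $C$ along any hole. Excluding the turtle forbids the single remaining dangerous configuration --- two adjacent vertices, one with at least three neighbours on one arc of a hole and the other with at least three neighbours on the complementary arc --- which is exactly the pattern realised by the $k$-turtle and is what would otherwise let $C$ thread a hole in exponentially many ways. (The $k$-theta, $k$-pyramid, $k$-prism, and $k$-ladder are already blocked by theta-, pyramid-, and prism-freeness.) Carrying this through, one shows $C$ decomposes into a bounded number of pieces, each determined by $O(1)$ vertices --- its endpoints along the relevant paths, plus one vertex of $L$ and one of $R$ witnessing that both sides are full --- so that these $O(1)$ vertices together with $u,v$ form the certificate, with $C$ recovered as the neighbourhood of the component of a marked vertex in $G$ minus the certificate.

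I expect the main obstacle to be the structural lemma, and within it the task of bounding how many vertices of $C$ (equivalently, of a small family of induced paths spanning $C$) can have many neighbours on a single one of these holes. Two such \emph{major} vertices on a given side are harmless, but ruling out their proliferation --- showing that a third, suitably placed, together with the hole and the paths into $L$ and $R$, reproduces a theta, pyramid, prism, or turtle --- requires a delicate analysis of how consecutive major vertices interleave along the hole and how their neighbourhoods overlap, and this is where essentially all of the work lies. Secondary difficulties are making the clique-cutset reduction rigorous specifically for \emph{counting} minimal separators, and handling minimal separators not close to any single hole; for the latter I would induct on $|V(G)|$, extract structure governed by a well-chosen hole, bound via the lemma the minimal separators interacting with that structure, and invoke the inductive hypothesis on what remains after removing or contracting it.
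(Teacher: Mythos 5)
Your proposal follows essentially the same route as the paper: a constant-size certificate (the paper uses an 18-tuple consisting of a ten-vertex ``frame'' of a hole passing through two nonadjacent vertices of $C$ plus eight auxiliary marked vertices) from which the separator is reconstructed, with the structural analysis of how vertices of $C$ attach to such a hole---the interaction of major vertices, and the turtle exclusion killing exactly the two-adjacent-vertices-with-three-neighbours-on-complementary-arcs pattern---carrying all the weight, just as you predict. The only notable divergences are in the bookkeeping: the paper does not decompose along clique cutsets but simply counts minimal clique separators separately via a known $O(|V(G)|)$ bound and works with non-clique minimal separators of $G$ directly, and the reconstruction is not ``neighbourhood of a component after deleting the certificate'' (which would not work, since the certificate does not separate the graph) but a union of three sets built from neighbourhoods of the hole's arcs together with the marked vertices and from reachability through the vertices outside the candidate separator, with the bound on the number of marked vertices coming from an argument that these graphs contain no $3$-creature.
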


Here we prove Conjecture \ref{conj:main_conj}. Let $\C$ be the class of (theta, pyramid, prism, turtle)-free graphs. We prove that the graphs in $\C$ have polynomially many minimal separators. Note that in view of the results in \cite{BBC}, listing the minimal separators of a graph can be done in polynomial time in the size of the graph and the number of its minimal separators. Our proof that the graphs in $\C$ have polynomially many minimal separators is algorithmic in nature, so we include a polynomial-time algorithm here to construct minimal separators of graphs in $\C$ for completeness.
\begin{theorem}
Let $G \in \C$. One can construct a set $\mathcal{S}$ of size at most $|V(G)|^{18}$ in polynomial time such that $\mathcal{S}$ is the set of all minimal separators of $G$.
\label{thm:main_thm}
\end{theorem}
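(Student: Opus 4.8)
The plan is to reduce the theorem to a single structural statement about minimal separators, from which the enumeration algorithm falls out. Recall that if $S$ is a minimal separator and $L$ is a connected component of $G\setminus S$ with $N(L)=S$ (a \emph{full component} of $S$), then $S=N(L)$, so it is enough to recover $L$ from bounded data. The statement I would establish is: there is a constant $c$ such that for every minimal separator $S$ of $G\in\C$ and every full component $L$ of $S$, the set $L$ contains a subset $Z$ with $|Z|\le c$ and $S\subseteq N(Z)$; informally, every full component dominates the separator using only $c$ of its own vertices. Granting this, take a minimal separator $S$ with two distinct full components $L$ and $R$, and choose $Z_L\subseteq L$, $Z_R\subseteq R$, each of size at most $c$, with $S\subseteq N(Z_L)$ and $S\subseteq N(Z_R)$. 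Since $L$ and $R$ are distinct components of $G\setminus S$, the sets $Z_L$ and $Z_R$ are disjoint with no edge between them, and $N(Z_L)\cap N(Z_R)=S$: indeed $S\subseteq N(Z_L)\cap N(Z_R)$ is immediate, while any vertex in the intersection that avoided $S$ would lie in a component of $G\setminus S$ adjacent to both $L$ and $R$, which is impossible. Hence the following polynomial-time procedure outputs exactly the minimal separators of $G$: for every ordered $c$-tuple $Z_L$ and every ordered $c$-tuple $Z_R$ of vertices such that no vertex among the entries of $Z_L$ is adjacent to a vertex among the entries of $Z_R$, form $T:=N(Z_L)\cap N(Z_R)$, test whether $G\setminus T$ has two distinct components with neighbourhood exactly $T$, and if so add $T$ to $\mathcal S$. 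Everything added is a minimal separator, and by the structural statement every minimal separator is added, so $|\mathcal S|$ is at most the number of such pairs of tuples, namely $|V(G)|^{2c}$; the structural analysis can be pushed to $c=9$, which gives the claimed bound $|V(G)|^{18}$.

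Proving the structural statement is where essentially all the work lies. Fix a minimal separator $S$ with full components $L$ and $R$; we must cover $S$ by boundedly many neighbourhoods of vertices of $L$. The key objects are the holes that \emph{cross} $S$: for non-adjacent $u,u'\in S$, take a shortest induced path from $u$ to $u'$ whose internal vertices lie in $L$ (such a path exists because $L$ is connected and each of $u,u'$ has a neighbour in $L$), and symmetrically a shortest induced path from $u$ to $u'$ with internal vertices in $R$; these two paths meet only in $\{u,u'\}$ and their union is a hole $H_{uu'}$ meeting $S$ exactly in $\{u,u'\}$, with one arc having its interior in $L$ and the other in $R$. (If $S$ is a clique, no crossing hole exists; this degenerate case, and a few others, are handled separately and more easily.) The strategy is to fix one well-chosen crossing hole $H$ and to prove that every vertex of $S$ has a neighbour in $L$ lying on or very close to $H$, so that $S$ is dominated from $L$ by the vertices of $V(H)\cap L$ together with boundedly many exceptions. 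Here theta-, pyramid-, and prism-freeness enter through the standard dichotomy for the attachment of a vertex $v\notin V(H)$ to a hole $H$ in a (theta, pyramid, prism)-free graph: either the neighbours of $v$ on $H$ are confined to a subpath of $H$ of bounded length, or $v$ is \emph{major}, i.e.\ $H\cup\{v\}$ is a wheel. For the non-major vertices of $S$ one argues that their neighbours in $L$ fall into boundedly many consecutive intervals of $H$, each dominated from inside $L$ by its two ends, which accounts for those vertices.

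The crux, which I expect to be the main obstacle, is the \emph{major} vertices of $S$: vertices $u\in S$ that are wheel centres of a crossing hole with all their neighbours concentrated on the $L$-arc, say. An unbounded family of such vertices ``spread out'' along a crossing hole is exactly the mechanism behind the exponentially many minimal separators of the $k$-turtle, which is why turtle-freeness must be used here. The point to extract is, roughly: if $S$ contained two adjacent vertices $u,u'$ with $u$ a wheel centre of a crossing hole $H$ having all its neighbours on the $L$-arc and $u'$ a wheel centre of $H$ having all its neighbours on the $R$-arc, then $H$ together with $u$ and $u'$ would be a turtle with centres $u$ and $u'$; and whenever the major vertices of $S$ along some crossing hole are too numerous without such a configuration appearing, one can instead clean the picture down to a theta, pyramid, or prism. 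Converting this into the quantitative claim --- that boundedly many major vertices of $S$ suffice, each dominating the remainder from within $L$ through a single neighbour --- and merging it with the non-major analysis is the technical heart of the argument. I expect it to come down to a careful case analysis of how two crossing holes, or a crossing hole and one extra major vertex, can interact, always extracting one of the four forbidden induced subgraphs whenever the number of vertices of $L$ needed to dominate $S$ is too large; carrying the constants through this analysis is what fixes the value of $c$, and hence the exponent $18$.
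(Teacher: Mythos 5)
Your reduction rests on the structural claim that every full component $L$ of a minimal separator $C$ contains a set $Z$ of bounded size with $C\subseteq N(Z)$, so that $C=N(Z_L)\cap N(Z_R)$ for two bounded tuples. This claim is exactly the conclusion one extracts in the paper's Theorem \ref{thm:immature_k_creature}: a minimal $X_A\subseteq A$ with $C\subseteq N(X_A)$ yields an induced immature $|X_A|$-creature, so bounded domination is essentially equivalent to excluding large immature creatures. The paper explicitly points out that graphs in $\C$ contain arbitrarily large immature $k$-creatures, which is precisely why the main theorem ``is not a corollary of Theorem \ref{thm:immature_k_creature}.'' Concretely, the claim fails: take a long hole $H$ crossed by $C$ at $c_1,c_2$, with $L=H_L^*$, $R=H_R^*$, and attach $m$ pairwise non-adjacent, pairwise nested \emph{hubs} $w_1,\dots,w_m$, where $N_H(w_i)$ consists of one edge of $H_L^*$ and one edge of $H_R^*$, these edges spread out along the two arcs. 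Such configurations contain no theta, pyramid, prism, or turtle (this is why the paper's Theorem \ref{theorem:star_cutset} must carve out hubs as an exceptional case and why Lemma \ref{lemma:construct_Y1} needs the second layer $X_2$), yet $C=\{c_1,c_2,w_1,\dots,w_m\}$ is a minimal separator in which each $w_i$ has exactly two neighbours in $L$, private to $w_i$; dominating $C$ from $L$ therefore requires about $m$ vertices. So no constant $c$, let alone $c=9$, can work, and your enumeration scheme of pairs of $c$-tuples cannot list all minimal separators.

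The paper's actual reconstruction is of a genuinely different shape, and this is the idea your proposal is missing: the $18$ recorded vertices are not a dominating set but a \emph{frame} from which one first rebuilds, in polynomial time, an entire crossing hole $H$ (Lemma \ref{lemma:construct_H}) together with the set $W$ of heavy vertices; the separator is then recovered as $C_1\cup C_2\cup C_3$, where $C_1=N(H_L^*\cup\{x_1,x_2\})\cap N(H_R^*\cup\{y_1,y_2\})$ is an intersection of neighbourhoods of \emph{unbounded} sets (the arcs of $H$), and $C_2,C_3$ are defined by reachability through $D=V(G)\setminus(H\cup C_L\cup C_R)$ to four further recorded vertices. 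The turtle/creature exclusions enter not to bound a dominating set of $C$, but to bound the auxiliary tuples $M_1,M_2$ via the $3$-creature lemma (Lemmas \ref{lemma:first_catch} and \ref{lem:shortest_paths}) and to force light vertices to sit at the ends of their butterflies (Theorem \ref{theorem:4_final_thm}). Your sketch of the ``technical heart'' (major vertices, interaction of crossing holes) gestures at the right phenomena but never supplies the mechanism that replaces bounded domination, so the proposal does not constitute a proof.
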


Since the graphs in Figure \ref{fig:k_forbidden} have exponentially many minimal separators, Theorem \ref{thm:main_thm} is in a sense the best possible. Moreover, as explained above, given an integer $k$, Theorem \ref{thm:main_thm} implies that \textsc{MWT$k$ISg} can be solved in polynomial time for graphs in $\C$. To be more precise, let $n, m, p, s$ denote, respectively, the number of vertices, the number of edges, the number of potential maximal cliques, and the number of minimal separators of a graph $G$. It is proved in \cite{BBC} that computing the minimal separators of $G$ can be done in time $\mathcal{O}(n^3 s)$. In \cite{BouTod}, it is proved that $p \leq \mathcal{O}(ns^2 + ns + 1)$ and that the potential maximal cliques of $G$ can be listed in time $\mathcal{O}(n^2ms^2)$. In \cite{ACPRS}, it is proved that given the list of potential maximal cliques of $G$ and an integer $k$, if $p$ is polynomial in $n$, then \textsc{MWT$k$ISg} can be solved in time $n^{\mathcal{O}(k)}$. By Theorem \ref{thm:main_thm}, for a graph $G \in \C$, we have $s \leq \mathcal{O}(n^{18})$, and so $p \leq \mathcal{O}(n^{37})$. Therefore, \textsc{MWT$k$ISg} can be solved in time $n^{O(k)}$ in $\C$. Using results from \cite{LVV}, a better complexity for \textsc{MWIS} can be achieved. In \cite{LVV}, based on \cite{FomVil}, it is proved that, given the list of potential maximal cliques, \textsc{MWIS} can be solved in time $\mathcal{O}(n^5mp)$ in any graph. Therefore, \textsc{MWIS} can be solved in time $\mathcal{O}(n^{44})$ in $\C$.

It is easy to observe that every prism, theta, and turtle contains an even hole. Therefore, the following is an immediate corollary of Theorem \ref{thm:main_thm}.

\begin{corollary}
The class of (pyramid, even hole)-free graphs has the polynomial separator property.
\label{cor:evenhole_pyramid}
\end{corollary}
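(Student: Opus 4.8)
The plan is to establish the remark made just before the statement — that every theta, prism, and turtle contains an even hole — and then invoke Theorem~\ref{thm:main_thm}. Granting that remark, the corollary follows at once: if $G$ is a (pyramid, even hole)-free graph then $G$ contains no theta, no prism, and no turtle (each of these would produce an even hole in $G$), and $G$ contains no pyramid by hypothesis, so $G\in\C$; Theorem~\ref{thm:main_thm} then gives that $G$ has at most $|V(G)|^{18}$ minimal separators, so the class has the polynomial separator property with $c=18$. So the only thing to prove is the parity remark.

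For a theta, let $P_1,P_2,P_3$ be its three paths and $\ell_i=|E(P_i)|$. By definition $V(P_i)\cup V(P_j)$ induces a hole, necessarily of length $\ell_i+\ell_j\ge 4$; by pigeonhole two of $\ell_1,\ell_2,\ell_3$ have the same parity, say $\ell_i\equiv\ell_j\pmod 2$, and then that hole is even. The prism is identical, except $V(P_i)\cup V(P_j)$ induces a hole of length $\ell_i+\ell_j+2\ge 4$, and again two of the $\ell_i$ agree modulo $2$.

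The turtle is the only case that takes a little work; the plan there is to exhibit three induced cycles through the two centres and compare their lengths. Fix an $xy$-turtle with paths $P_1$ (from $a_1$ to $b_1$) and $P_2$ (from $a_2$ to $b_2$), let $c,c'$ be the first and last neighbours of $x$ along $P_1$, and let $d,d'$ be the first and last neighbours of $y$ along $P_2$; since $x$ and $y$ each have at least three neighbours, $c\ne c'$ and $d\ne d'$, and moreover $c,c'$ are non-adjacent and $d,d'$ are non-adjacent. Let $E_1$ be the cycle formed by the edge $xy$, the subpath of $P_2$ from $d$ to $a_2$, the edge $a_2a_1$, the subpath of $P_1$ from $a_1$ to $c$, and the edge $cx$; let $E_2$ be the symmetric cycle on the $b$-side, using $c',d'$ and the edge $b_1b_2$; and let $F$ be the cycle formed by the edge $xc$, the subpath of $P_1$ from $c$ to $a_1$, the edge $a_1a_2$, the subpath of $P_2$ from $a_2$ to $d$, the edges $dy$ and $yd'$, the subpath of $P_2$ from $d'$ to $b_2$, the edge $b_2b_1$, the subpath of $P_1$ from $b_1$ to $c'$, and the edge $c'x$ (informally, $F$ is the hole induced by $V(P_1)\cup V(P_2)$ ``folded'' at $x$ and at $y$). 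Using that $x$ has no neighbour on $P_2$, that $y$ has no neighbour on $P_1$, that $V(P_1)\cup V(P_2)$ induces a chordless cycle, and the distance conditions guaranteed by the turtle definition (for instance $c\ne c'$ being non-adjacent, which rules out a chord of $F$ at $x$), one checks that $E_1$, $E_2$, and $F$ are induced cycles. A vertex count gives $|E_1|=\dist_{P_1}(a_1,c)+\dist_{P_2}(a_2,d)+4$, $|E_2|=\dist_{P_1}(c',b_1)+\dist_{P_2}(d',b_2)+4$, and $|F|=\dist_{P_1}(a_1,c)+\dist_{P_1}(c',b_1)+\dist_{P_2}(a_2,d)+\dist_{P_2}(d',b_2)+6$, so $|F|=|E_1|+|E_2|-2$. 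Therefore $|E_1|$, $|E_2|$, $|F|$ cannot all be odd, and whichever of the three has even length is an even hole (an induced cycle of even length has length at least $4$).

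I do not expect a real obstacle. The pigeonhole arguments for thetas and prisms, and the deduction from Theorem~\ref{thm:main_thm}, are routine; the only point needing care is verifying that $E_1$, $E_2$, and $F$ are chordless in the turtle case — which is precisely where the ``at least three neighbours'' hypothesis enters — together with the bookkeeping of the four distances in the length formulas. As the text preceding the corollary notes, the even-hole claim is ``easy to observe''.
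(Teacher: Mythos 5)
Your reduction to Theorem~\ref{thm:main_thm} and your pigeonhole arguments for the theta and the prism are correct, and the overall route is exactly the paper's (which states the even-hole observation without proof). The gap is in the turtle case: the cycle $F$ is \emph{not} induced. By the definition of a turtle $x$ and $y$ are adjacent, and on $F$ they appear as non-consecutive vertices ($x$ between $c'$ and $c$, $y$ between $d$ and $d'$), so $xy$ is a chord of $F$. In fact $xy$ is the unique chord of $F$ and it splits $F$ exactly into $E_1$ and $E_2$ --- this is precisely why your identity $|F|=|E_1|+|E_2|-2$ holds --- so the parity of $F$ carries no information beyond the parities of $E_1$ and $E_2$, and the argument yields nothing when $E_1$ and $E_2$ are both odd. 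That case genuinely occurs: take $P_1=a_1\dd u_1\dd u_2\dd u_3$ with $b_1=u_3$ and $N_{P_1}(x)=\{u_1,u_2,u_3\}$, and $P_2=v_1\dd v_2\dd v_3\dd v_4\dd b_2$ with $a_2=v_1$ and $N_{P_2}(y)=\{v_1,v_2,v_3,v_4\}$. Then $E_1=x\dd u_1\dd a_1\dd a_2\dd y\dd x$ and $E_2=x\dd u_3\dd b_2\dd v_4\dd y\dd x$ both have length five, $H$ has length nine, every $x$- or $y$-sector inside $P_1$ or $P_2$ is a triangle, and the unique even hole of this turtle is $y\dd v_1\dd a_1\dd u_1\dd u_2\dd u_3\dd b_2\dd v_4\dd y$, of length eight --- a cycle your proof never examines.

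The repair is to replace $F$ by the two ``outer sector'' holes of the centers, together with $H$ itself. Let $D_x$ be the cycle consisting of $x$ and the arc of $H$ from $c$ to $c'$ through $a_1,a_2$, and let $D_y$ consist of $y$ and the arc of $H$ from $d$ to $d'$ through $a_1,a_2$; each is an induced cycle of length at least four (the only neighbors of $x$ on $D_x$ are $c$ and $c'$, $y\notin D_x$, and the rest is an arc of the hole $H$; symmetrically for $D_y$). Writing $p,q,r$ for the $P_1$-distances from $a_1$ to $c$, $c$ to $c'$, and $c'$ to $b_1$, and $s,t,w$ for the analogous $P_2$-distances, one gets $|H|=p+q+r+s+t+w+2$, $|E_1|=p+s+4$, $|E_2|=r+w+4$, $|D_x|=|H|-q+2$, and $|D_y|=|H|-t+2$, and the sum of these five lengths is $\equiv 4(p+q+r+s+t+w)\equiv 0 \pmod 2$. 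Hence not all five induced cycles are odd, and whichever is even is an even hole. (In the example above it is $D_y$.)
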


\vspace{-0.5cm}

In \cite{CTTV}, a better bound than the one given in Theorem \ref{thm:main_thm} is achieved for (pyramid, even hole)-free graphs. In particular, Corollary \ref{cor:evenhole_pyramid} implies that \textsc{MWT$k$ISg} and \textsc{MWIS} can be solved in (pyramid, even hole)-free graphs in polynomial time. A \emph{cap} is a cycle of length at least five with exactly one chord and that chord creates a triangle with the cycle. Since every pyramid contains a cap, Corollary \ref{cor:evenhole_pyramid} generalizes a result of \cite{CGP} where it is shown that \textsc{MWIS} can be solved in (cap, even hole)-free graphs in polynomial time.

We conjecture a stronger version of Theorem \ref{thm:main_thm}. For an integer $k \geq 3$, a graph $G$ is called a \emph{$k$-creature} if it is given as follows: $V(G) = A \cup B \cup \{x_1, \dots, x_k\} \cup \{y_1, \dots, y_k\}$ such that
\begin{enumerate}[(i)]
\itemsep-0.2em
\item $G[A]$ and $G[B]$ are connected, and $A$ is anticomplete to $B$,
\item for $i=1,\dots,k$, $x_iy_i \in E(G)$, $x_i$ has a neighbor in $A$ and is anticomplete to $B$, $y_i$ has a neighbor in $B$ and is anticomplete to $A$, and
\item for $1 \leq i, j \leq k$ with $i\neq j$, $x_iy_j \notin E(G)$.
\end{enumerate}

We observe that if $G$ is a $k$-creature, then $G$ contains a theta, pyramid, prism, or turtle; see Lemma \ref{lemma:three_paths_lemma} for details. We conjecture the following:

\begin{conjecture}
There exists $f:\N \to \N$ such that if no induced subgraph of $G$ is a $k$-creature, then $G$ has at most $|V(G)|^{f(k)}$ minimal separators.
\label{conj:creature_conj}
\end{conjecture}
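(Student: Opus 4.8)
\medskip
\noindent\textbf{A proof strategy for Conjecture~\ref{conj:creature_conj}.}
The plan is to run the same high-level scheme that underlies Theorem~\ref{thm:main_thm}: attach to each minimal separator a bounded-size \emph{certificate}, and bound the number of certificates. Fix a $k$-creature-free graph $G$ and a minimal separator $S$ with full sides $C_L, C_R$ (components of $G\setminus S$ with $N(C_L)=N(C_R)=S$). The engine should be an extremal lemma of the shape: \emph{if there is no set $Z\subseteq C_L\cup C_R$ with $|Z|\le g(k)$ such that $S$ is determined by $Z$, then $G$ contains an induced $k$-creature.} Here ``$S$ is determined by $Z$'' must be chosen so that (i) its failure is precisely what powers the construction of a $k$-creature, and (ii) when it holds, $S$ is reconstructible from $Z$ in polynomial time --- for instance, $Z$ meets both $C_L$ and $C_R$, $S\subseteq N(Z)$, and $S$ is recovered as the set of vertices of $N(Z)$ on the boundary between the components of $G\setminus N(Z)$ meeting $Z\cap C_L$ and $Z\cap C_R$ (after a routine polynomial cleaning). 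Granting the lemma, one lets the certificate be a minimum such $Z$; there are at most $|V(G)|^{g(k)}$ of them, and $f(k)$ is then $g(k)$ plus a constant absorbed by the reconstruction.

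The heart of the matter, and the step I expect to be the main obstacle, is the extremal lemma. One would proceed greedily: if no bounded $Z$ determines $S$, then one picks vertices $v_1,v_2,\dots\in S$ together with neighbours $a_i$ of $v_i$ in $C_L$ and $b_i$ of $v_i$ in $C_R$ that are mutually far apart --- far enough that BFS inside the connected sets $C_L$ and $C_R$ yields connected subgraphs $A\subseteq C_L$ and $B\subseteq C_R$ through all the $a_i$, resp.\ all the $b_i$, such that each $v_j$ with $j\neq i$ misses the part of $A$ (and of $B$) near $a_i$ (resp.\ $b_i$). After passing to a large sub-collection to make the adjacencies among the $v_i$'s, and between the $v_i$'s and $A\cup B$, uniform (a Ramsey/sunflower step, where the dependence of $g$ on $k$ is born), one should read off a $k$-creature with parts $A$ and $B$. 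Two difficulties stand out. First, the definition of $k$-creature wants a single edge $x_iy_i$ with $x_i\in N(A)\setminus N(B)$ and $y_i\in N(B)\setminus N(A)$, whereas each $v_i\in S$ sees both sides; one must therefore locate genuinely distinct adjacent vertices of $G$ in the roles of $x_i, y_i$, which is exactly where the four unavoidable configurations of Lemma~\ref{lemma:three_paths_lemma} have to be produced \emph{with parameters} --- uniformly along the whole nested family rather than once. Second, the condition $x_iy_j\notin E$ for $i\neq j$ asks the private parts on the two sides to be anticomplete in the right sense, not merely far apart, and controlling this is where most of the case analysis will live.

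A possible alternative is induction on $k$, with base case $k=3$ --- which is itself a genuine strengthening of Theorem~\ref{thm:main_thm}, since $3$-creature-freeness is implied by, but strictly weaker than, excluding theta, pyramid, prism, and turtle (every $3$-creature contains one of these four by Lemma~\ref{lemma:three_paths_lemma}, while $K_{2,3}$ is a $3$-creature-free theta). One would hope to show that a $k$-creature-free graph either has all minimal separators of bounded certificate size, or contains a bounded set whose deletion peels off a $(k-1)$-creature-free induced subgraph accounting for all but polynomially many of the minimal separators, so that the inductive bounds compose. I am less confident this second route sidesteps the core difficulty; either way the real content is turning a minimal separator that resists bounded description into an honest induced $k$-creature, and pinning down the precise notion of ``resists bounded description'' (covering by $N(Z)$, by $N[Z]$, or up to a polynomial-size error set) is the design choice on which the whole proof hinges.
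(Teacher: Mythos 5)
This statement is a conjecture that the paper explicitly leaves open, so there is no proof in the paper to compare yours against; the closest the paper comes is Theorem~\ref{thm:immature_k_creature} (which handles only \emph{immature} $k$-creatures) and Theorem~\ref{thm:main_thm} combined with Lemma~\ref{lemma:three_paths_lemma} (which covers a proper subclass of the $3$-creature-free graphs). Your submission is, by its own account, a plan rather than a proof: the certificate-and-reconstruction scheme is indeed the scheme behind both Theorem~\ref{thm:immature_k_creature} and Sections~\ref{sec:structure_proper}--\ref{sec:constructing_separators}, but the one step that carries all the weight --- the ``extremal lemma'' that a minimal separator admitting no bounded certificate forces an induced $k$-creature --- is exactly the open content of Conjecture~\ref{conj:creature_conj}, and you do not prove it. So the gap is the entire argument.

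It is worth saying concretely why that lemma is not a Ramsey-style cleanup of the proof of Theorem~\ref{thm:immature_k_creature}. That proof extracts, from a separator $S$ not dominated by few vertices of one side, only an \emph{immature} creature: private neighbors, no connectivity or anticompleteness requirements on the hubs, and no one-sidedness of the legs. For a genuine $k$-creature anchored at $S$, the matching edges $x_iy_i$ cannot join the two full components (those are anticomplete), so up to symmetry one is forced into placing $A\cup\{x_1,\dots,x_k\}$ inside one full component and $y_1,\dots,y_k$ inside $S$; this requires (a) an \emph{induced matching} of size $k$ in the bipartite graph between $S$ and its neighborhood on that side, and (b) all $y_i$ simultaneously anticomplete to a single connected set $A$ to which all $x_i$ attach. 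Neither follows from ``$S$ admits no small certificate'': if the neighborhoods of the vertices of $S$ on one side are pairwise nested (a half-graph), there is no induced matching of size two and hence no creature anchored there, yet $S$ can be arbitrarily large and the family of such separators exponentially numerous. This is precisely the skinny-ladder obstruction, and subsequent work of Gartland and Lokshtanov (``Dominated minimal separators are tame (nearly all others are feral)'') exploits it to construct $k$-creature-free graphs with exponentially many minimal separators, refuting Conjecture~\ref{conj:creature_conj} as stated. Any proof strategy must therefore fail somewhere, and the place where yours fails is steps (a)--(b) above; a repaired statement must exclude skinny-ladder-like structures in addition to creatures.
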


Observe that even if Conjecture \ref{conj:creature_conj} is true, it does not provide a full characterization of classes with the polynomial separator property. For example, for every integer $k\geq 1$, let $T_k$ be a $k$-turtle such that the two paths $P_1$ and $P_2$ both have length $2^{2^k}$. Let $\mathcal{D}$ be the class of graphs formed by all induced subgraphs of the graphs $T_k$, $k \geq 1$. Observe that $T_k$ has $2^k$ minimal separators, which is polynomial in $|V(T_k)|$ since $|V(T_k)| \geq 2^{2^k}$, and so $\mathcal{D}$ has the polynomial separator property. However, $\mathcal{D}$ contains $k$-creatures with $k$ arbitrarily large.

We prove a weaker version of Conjecture \ref{conj:creature_conj}. The proof can also be found in \cite{CTTV}. We say that a graph $G$ is an \emph{immature $k$-creature} if $V(G)$ can be partitioned into two sets $X = \{x_1, \dots, x_k\}$ and $Y = \{y_1, \dots, y_k\}$ such that the only edges between $X$ and $Y$ are the edges $x_iy_i$ for $i=1,\dots,k$. The edges among vertices of $X$ and vertices of $Y$ are unrestricted.

\begin{theorem}
Let $k \geq 1$ be an integer and let $G$ be a graph on $n$ vertices such that no induced subgraph of $G$ is an immature $k$-creature. Then, $G$ has at most $\mathcal{O}(n^{2k-2})$ minimal separators that can be enumerated in time $\mathcal{O}(n^{2k})$.
\label{thm:immature_k_creature}
\end{theorem}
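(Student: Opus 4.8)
I would show that every minimal separator $C$ of $G$ has the form $N(A)\cap N(B)$ for two vertex subsets $A,B$ each of size at most $k-1$, and then simply enumerate over all such pairs.

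So fix a minimal separator $C$, and let $L,R$ be two distinct components of $G\setminus C$ with $N(L)=N(R)=C$; in particular every vertex of $C$ has a neighbour in $L$ and a neighbour in $R$. The key step is a domination lemma: $C$ is dominated by at most $k-1$ vertices of $L$, and symmetrically by at most $k-1$ vertices of $R$. To see this, let $D\subseteq L$ be an inclusion-minimal subset of $L$ dominating $C$ (one exists since $L$ itself dominates $C$). For each $a\in D$, minimality of $D$ yields a vertex $c_a\in C$ whose unique neighbour in $D$ is $a$. The vertices $\{c_a:a\in D\}$ are then pairwise distinct and disjoint from $D$, and the only edges of $G$ between $D$ and $\{c_a:a\in D\}$ are the matching edges $ac_a$; so if $|D|\ge k$, any $k$ of the pairs $(a,c_a)$ induce an immature $k$-creature in $G$, contradicting the hypothesis. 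Hence $|D|\le k-1$.

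Granting the lemma, I would pick $A\subseteq L$ and $B\subseteq R$, each of size at most $k-1$ and each dominating $C$. Then $C\subseteq N(A)$ and $C\subseteq N(B)$ (as $A,B$ dominate $C$ and are disjoint from $C$), while any $v\in N(A)\cap N(B)$ has a neighbour in $L$ and a neighbour in $R$ and hence must lie in $C$ since $L\ne R$ are components of $G\setminus C$; so $C=N(A)\cap N(B)$. Consequently the number of minimal separators is at most $\bigl(\sum_{i=0}^{k-1}\binom{n}{i}\bigr)^{2}=\mathcal{O}(n^{2k-2})$. To enumerate them, loop over all ordered pairs $(A,B)$ of vertex subsets of size at most $k-1$; for each, form $S=N(A)\cap N(B)$ and test in time $\mathcal{O}(n^{2})$ whether $S$ is a minimal separator (find the components of $G\setminus S$ and check that at least two of them have neighbourhood exactly $S$); output the candidates that pass. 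This outputs exactly the set of minimal separators, in total time $\mathcal{O}(n^{2k-2})\cdot\mathcal{O}(n^{2})=\mathcal{O}(n^{2k})$.

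The only real obstacle is the domination lemma; once the private-neighbour observation is in hand, the rest is bookkeeping. One degenerate case to check separately is $k=1$, where the hypothesis forces $G$ to be edgeless, the claimed bound is ``at most one'' minimal separator, and indeed the single candidate $N(\emptyset)\cap N(\emptyset)=\emptyset$ is a minimal separator exactly when $|V(G)|\ge 2$.
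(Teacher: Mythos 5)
Your proposal is correct and follows essentially the same route as the paper: an inclusion-minimal dominating set in each full component, private neighbours yielding an immature creature and hence the bound $k-1$ on its size, the identity $C=N(A)\cap N(B)$, and enumeration over all pairs of small subsets. The explicit treatment of $k=1$ is a harmless extra.
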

\begin{proof}
Let $a$ and $b$ be two non-adjacent vertices in $G$. Let $C$ be a minimal separator that separates $a$ and $b$. Let $A$ and $B$ be the components of $G \setminus C$ that contain $a$ and $b$, respectively. By the minimality of $C$, every vertex in $C$ has a neighbor in $A$. It is therefore well-defined to consider an inclusion-wise minimal subset $X_A$ of $A$ such that $C \subseteq N(X_A)$. For every $x \in X_A$, there exists a vertex $c \in C$ such that $xc \in E(G)$ and no other vertex of $X_A$ is adjacent to $c$, for otherwise, $X_A \setminus \{x\}$ would contradict the minimality of $X_A$. It follows that $G[X_A \cup C]$ contains an immature $|X_A|$-creature, and so $|X_A| < k$. We define a similar set $X_B \subseteq B$, and we observe that $C = N(X_A) \cap N(X_B)$.

Now, the following algorithm enumerates all minimal separators of $G$: for every pair of sets $X_A, X_B$ with $|X_A|,|X_B| < k$, compute $C = N(X_A) \cap N(X_B)$ and check whether $C$ is a minimal separator. Since $\binom{n}{i} \leq n^i$, we have $\binom{n}{0} + \dots + \binom{n}{k-1} \leq k n^{k-1}$. Therefore, the algorithm enumerates at most $\mathcal{O}(n^{2k-2})$ minimal separators in time $\mathcal{O}(n^{2k})$.
\end{proof}

We note that there exist graphs in $\C$ of arbitrarily large cliquewidth. In \cite{ALMRTV}, examples of even-hole-free graphs of arbitrarily large cliquewidth are presented. Those graphs are also diamond-free and they have no clique separators. (A \emph{diamond} is the graph with vertex set $\{a, b, c, d\}$ with all possible edges except $ab$.) However, they are not in $\C$ because they contain pyramids. In \cite{CTTV}, a procedure to obtain graphs in $\C$ with unbounded cliquewidth by modifying graphs defined in \cite{ALMRTV} is explained in detail. Moreover, those graphs contain arbitrarily large immature $k$-creatures, and so the main result of the current paper is not a corollary of Theorem \ref{thm:immature_k_creature}.

The rest of the paper is devoted to the proof of Theorem \ref{thm:main_thm}. In Section \ref{sec:decomposing_C}, we prove a useful theorem about star cutsets of graphs in $\C$. In Section \ref{sec:structure_proper}, we describe the structure of proper separators of graphs in $\C$. In Section \ref{sec:constructing_separators}, we construct a list of all minimal separators of graphs in $\C$ and prove Theorem \ref{thm:main_thm}.

\subsection*{Definitions}

Let $G = (V, E)$ be a graph. For $X \subseteq V(G)$, $G[X]$ denotes the induced subgraph of $G$ with vertex set $X$ and $G \setminus X$ denotes the induced subgraph of $G$ with vertex set $V(G) \setminus X$. We use induced subgraphs and their vertex sets interchangeably throughout the paper. We say that $G$ \emph{contains} a graph $H$ if $G$ has an induced subgraph isomorphic to $H$. A graph G is \emph{$H$-free} if it does not contain $H$. When $\mathcal{H}$ is a set of graphs, we say that $G$ is $\mathcal{H}$-free if $G$ is $H$-free for every $H \in \mathcal{H}$. For a graph $H$, we say that a set $X \subseteq V(G)$ is an $H$ in $G$ if $G[X]$ is isomorphic to $H$.

Let $X \subseteq V(G)$. The \emph{neighborhood} of $X$ in $G$, denoted by $N(X)$, is the set of all vertices in $V(G) \setminus X$ with a neighbor in $X$. The \emph{closed neighborhood} of $X$ in $G$, denoted $N[X]$, is given by $N[X] = N(X) \cup X$. For $u \in V(G)$, $N(u) = N(\{u\})$ and $N[u] = N[\{u\}]$. For $u \in V(G) \setminus X$, $N_X(u) = N(u) \cap X$. Let $Y \subseteq V(G)$ be disjoint from $X$. We say $X$ is \emph{complete} to $Y$ if every vertex in $X$ is adjacent to every vertex in $Y$, and $X$ is \emph{anticomplete} to $Y$ if every vertex in $X$ is non-adjacent to every vertex in $Y$.  Note that the empty set is complete and anticomplete to every $X \subseteq V(G)$. We say that a vertex $v$ is \emph{complete} (\emph{anticomplete}) to $X \subseteq V(G)$ if $\{v\}$ is complete (anticomplete) to $X$, and an edge $e = uv$ is \emph{complete} (\emph{anticomplete}) to $X$ if $\{u, v\}$ is complete (anticomplete) to $X$. 

A \emph{clique} in $G$ is a set of pairwise adjacent vertices, and an \emph{independent set} is a set of pairwise non-adjacent vertices. A \emph{triangle} is a clique of size three. A {\em path} in $G$ is an induced subgraph isomorphic to a graph $P$ with $k+1$ vertices $p_0, p_1, \dots, p_k$ and with $E(P) = \{p_ip_{i+1} : i \in \{0,\dots, k-1\}\}$. We write $P = p_0 \dd p_1 \dd \dots \dd p_k$ to denote a path with vertices $p_0, p_1, \dots, p_k$ in order. We say that $P$ is a path from $p_0$ to $p_k$. For a set $Y \subseteq V(G)$, if $P \setminus \{p_0,p_k\} \subseteq Y$, we say that $P$ is a path from $p_0$ to $p_k$ through $Y$. The \emph{length} of a path $P$ is the number of edges in $P$. A path is {\em odd} if its length is odd, and {\em even} otherwise. If $a, b \in P$, we denote by $aPb$ the subpath of $P$ from $a$ to $b$. For a path $P$ with ends $a, b$, the \emph{interior} of $P$, denoted $P^*$, is the set $V(P) \setminus \{a, b\}$. For an integer $k \geq 4$, a {\em hole of length $k$} in $G$ is an induced subgraph isomorphic to the $k$-vertex cycle $C_k$. A hole is {\em odd} if its length is odd, and {\em even} if its length is even.

If $X, Y, Z \subseteq V(G)$ are such that $X\cap Z= \emptyset$, we say that the path $P = p_0 \dd \hdots \dd p_k$ is a path from $X$ to $Z$ through $Y$ if $p_0 \in X$, $V(P) \setminus \{p_0\} \subseteq Y$, $V(P) \setminus \{p_k\}$ is anticomplete to $Z$, and $p_k$ has a neighbor in $Z$. When $X = \{p_0\}$, we say that $P$ is a path from $p_0$ to $Z$ through $Y$. Note that $P$ is disjoint from $Z$, and possibly $P=p_0$ (when $p_0$ is a vertex from $X$ with neighbors in $Z$). A path from $X$ to $Z$ through $Y$, when it exists, can be computed in time $\mathcal{O}(|V(G)|^2)$ as follows. In $G[X \cup Y \cup Z]$, compute a shortest path $q_0 \dd \hdots \dd q_k$ from $X$ to $Z$ by a breadth-first search (that also terminates when no such path exists). Then, the path $q_0 \dd \hdots \dd q_{k-1}$ is from $X$ to $Z$ through $Y$. Observe that this algorithm detects when no path from $X$ to $Z$ through $Y$ exists (when no connected component of $G[X \cup Y \cup Z]$ contains vertices of both $X$ and $Z$).

\section{Star cutsets in (theta, pyramid, prism, turtle)-free graphs}\label{sec:decomposing_C}

Let $G$ be a graph and $H$ a hole in $G$. A \emph{minor vertex} for $H$ is a vertex $u \in G \setminus H$ such that $u$ has neighbors in $H$ and $N_H(u)$ is contained in a three-vertex path. A vertex $v \in G \setminus H$ is a \emph{major vertex} for $H$ if $v$ has neighbors in $H$ and $v$ is not minor. A set $X \subseteq V(G)$ is a \emph{star cutset} of $G$ if $G \setminus X$ is not connected, and there exists $x \in X$ such that $x$ is complete to $X \setminus x$. We call $x$ the \emph{center} of the star cutset $X$. The goal of this section is to prove that major vertices for holes of graphs $G \in \C$ are the centers of star cutsets of $G$. The following two lemmas describe major and minor vertices.

\begin{lemma}
Let $G \in \C$. Every major vertex $u$ for a hole $H$ of $G$ has at least four neighbors in $H$ or has exactly three neighbors in $H$ that are pairwise non-adjacent.
\label{lemma:1}
\end{lemma}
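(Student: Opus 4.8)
The plan is to run a short case analysis on $|N_H(u)|$ and to show that whenever $u$ is a major vertex for $H$ whose neighborhood in $H$ violates the conclusion, the graph $G[V(H)\cup\{u\}]$ already contains a theta or a pyramid, contradicting $G\in\C$. The one fact I would set up first is: if $N_H(u)$ is contained in a set of three consecutive vertices of $H$, then $u$ is minor for $H$; equivalently, since $u$ is major, $N_H(u)$ is \emph{not} contained in any set of three consecutive vertices of $H$. I would also record that, because $H$ has length at least $4$, any edge $pq$ of $H$ extends to three consecutive vertices of $H$, and any two vertices of $H$ at distance $2$ lie on three consecutive vertices of $H$.

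First I would dispose of $|N_H(u)|\le 2$. If $|N_H(u)|\le 1$ then $u$ is minor, so let $N_H(u)=\{p,q\}$. If $\dist_H(p,q)\le 2$ then $\{p,q\}$ lies on three consecutive vertices of $H$ by the remark above, so $u$ is minor --- contradiction; hence $\dist_H(p,q)\ge 3$. In that case $p$ and $q$ are nonadjacent, both arcs of $H$ between them have length at least $3$, the path $p\dd u\dd q$ has length $2$, and since $N_H(u)=\{p,q\}$ the vertex $u$ has no neighbor in the interior of either arc; so $p\dd u\dd q$ together with the two arcs of $H$ form a theta between $p$ and $q$, a contradiction. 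Thus a major vertex has at least three neighbors in $H$.

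It remains to treat $|N_H(u)|=3$, say $N_H(u)=\{a,b,c\}$. If $|V(H)|=4$ then any three vertices of $H$ are consecutive, so $u$ would be minor; hence $|V(H)|\ge 5$. Since $u$ is major, $a,b,c$ are not three consecutive vertices of $H$. Suppose, for a contradiction, that two of them are adjacent, say $a\sim b$. Because $|V(H)|\ge 5$, if $c$ were adjacent to $a$ or to $b$ then $a,b,c$ would be three consecutive vertices of $H$, so $c$ is nonadjacent to both. Let $Q$ be the arc of $H$ from $a$ to $b$ avoiding the edge $ab$; then $c$ is an internal vertex of $Q$, so $Q_a:=aQc$ and $Q_b:=cQb$ each have length at least $2$. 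Now $\{u,a,b\}$ is a triangle, and I claim that the paths $c\dd u$, $Q_a$, $Q_b$ form a pyramid from $c$ to the triangle $uab$: they are pairwise disjoint except at $c$; at most one of them (namely $c\dd u$) is a single edge; $V(Q_a)\cup V(Q_b)=V(H)$ induces a hole; and, using $N_H(u)=\{a,b,c\}$ to rule out chords from $u$, each of $V(c\dd u)\cup V(Q_a)$ and $V(c\dd u)\cup V(Q_b)$ induces a hole, of length at least $4$ since $Q_a,Q_b$ have length at least $2$. This contradicts $G\in\C$, so $a,b,c$ are pairwise nonadjacent --- which is exactly the statement.

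I do not expect a genuine obstacle: the whole argument amounts to picking, in each offending configuration, the right theta or pyramid, and the chord checks are immediate from $N_H(u)=\{a,b,c\}$ and the chordlessness of $H$. The only thing to watch is the behavior for short holes --- $|V(H)|=4$, where ``three vertices'' and ``three consecutive vertices'' coincide, and $|V(H)|=5$, where one must verify that the two small holes appearing in the pyramid have length $4$ rather than $3$ --- which is precisely why I would open the proof by spelling out when a subset of $V(H)$ spans three consecutive vertices of $H$.
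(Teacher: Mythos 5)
Your proposal is correct and follows essentially the same route as the paper's (much terser) proof: a major vertex with two neighbors in $H$ yields a theta $H\cup\{u\}$, and one with three neighbors, two of which are adjacent, yields a pyramid $H\cup\{u\}$ with apex the third neighbor. You merely spell out the chord checks and the short-hole cases that the paper leaves implicit.
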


\begin{proof}
Let $u$ be a major vertex for a hole $H$. Suppose $u$ has exactly two neighbors in $H$. The neighbors of $u$ are non-adjacent, because they are not contained in a three-vertex path. Thus, $H \cup \{u\}$ forms a theta between the neighbors of $u$ in $H$, a contradiction. If $u$ has exactly three neighbors in $H$, they are pairwise non-adjacent, otherwise $H \cup \{u\}$ forms a pyramid. 
\end{proof}

\begin{lemma}
Let $G \in \C$. Then, every minor vertex $u$ for a hole $H$ of $G$ satisfies one of the following: 
\begin{itemize}
\itemsep-0.2em
\item $u$ has a unique neighbor in $H$ (we say that $u$ is a \emph{pendant} of $H$)
\item $u$ has two adjacent neighbors in $H$ (we say that $u$ is a \emph{cap} of $H$)
\item $u$ has three neighbors in $H$ which induce a path $xyz$ (we say that $u$ is a \emph{clone} of $y$ in $H$)
\end{itemize}
\vspace{-\baselineskip}
\vspace{0.2cm}
\label{lemma:2}
\end{lemma}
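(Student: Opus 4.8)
The plan is to prove Lemma~\ref{lemma:2} by a straightforward case analysis on the number of neighbors of a minor vertex $u$ in the hole $H$, using only the definition of minor vertex together with the exclusion of thetas and pyramids. Recall that, by definition, $u$ has at least one neighbor in $H$ and $N_H(u)$ is contained in a three-vertex path of $H$; so $|N_H(u)| \in \{1,2,3\}$, and the three bullets of the lemma correspond exactly to these three cardinalities. The goal is therefore to show that when $|N_H(u)|=2$ the two neighbors are adjacent, and when $|N_H(u)|=3$ the three neighbors induce a path (rather than, say, having a missing middle edge).

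First I would handle $|N_H(u)| = 1$: there is nothing to prove, $u$ is a pendant by definition. Next, suppose $|N_H(u)| = 2$, say $N_H(u) = \{p, q\}$. Since $N_H(u)$ lies on a three-vertex subpath of $H$, either $pq \in E(H)$ (and $u$ is a cap, as desired) or $p$ and $q$ are the two ends of a three-vertex path $p\dd r\dd q$ of $H$, so $p$ and $q$ are at distance exactly two in $H$. In the latter case, consider $H$ together with $u$: the two arcs of $H$ between $p$ and $q$, together with the path $p\dd u\dd q$, form three internally disjoint paths between the nonadjacent vertices $p,q$, each with at least two edges (the short arc is $p\dd r\dd q$, the long arc has length $\geq 2$ since $H$ has length $\geq 4$, and $p\dd u\dd q$ has length $2$); moreover any two of these three paths induce a hole because $u$ has no other neighbor in $H$ and $H$ itself is chordless. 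Hence $H \cup \{u\}$ is a theta between $p$ and $q$, contradicting $G \in \C$. So the two neighbors must be adjacent, establishing the cap case.

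Finally, suppose $|N_H(u)| = 3$, say $N_H(u) = \{x, y, z\}$, contained in a three-vertex subpath of $H$; that subpath must then be $x\dd y\dd z$ (up to relabeling), so $xy, yz \in E(H)$, and the only question is whether $xz \in E(H)$ --- but $xz \notin E(H)$ since $H$ is a hole of length at least $4$ (three consecutive vertices of a hole never include the ``wrap-around'' chord unless the hole has length $3$, which it does not). Thus $\{x,y,z\}$ induces the path $x\dd y\dd z$ in $H$, and $u$ is a clone of $y$, as claimed. (If one wants to see the pyramid explicitly: should the neighbors of a minor vertex ever induce a triangle, $H$ together with that vertex would be a pyramid; but the minor-vertex hypothesis already rules out a triangle since a triangle is not contained in a three-vertex path.)

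I do not expect any real obstacle here: the lemma is essentially unpacking the definition of ``minor vertex'' and observing that the only configuration of two neighbors at distance two, or three neighbors with a non-edge in the middle, would create a theta. The one point that requires a little care is verifying that in the distance-two subcase the three paths genuinely form a theta --- i.e.\ checking that each has length at least two and that each pair induces a hole (no unexpected chords), which follows because $H$ is chordless and $u$'s only neighbors in $H$ are its named ones. Everything else is immediate from $H$ having length $\geq 4$.
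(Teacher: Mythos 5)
Your proposal is correct and follows the same route as the paper: the paper's proof likewise observes that the definition of minor vertex leaves only the case of two non-adjacent neighbors unaccounted for, and that this case yields a theta $H \cup \{u\}$. Your write-up simply spells out the verification (lengths of the three paths, absence of chords) that the paper leaves implicit.
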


\begin{proof}
Since $u$ is minor, $N_H(u)$ is contained in a three-vertex path. Therefore, the only possibility not listed above is that $u$ has two non-adjacent neighbors in $H$, in which case $H \cup \{u\}$ is a theta.
\end{proof}

Suppose $H$ is a hole of a graph $G$ and $u$ is a clone of $y$ in $H$. We denote by $H_{u \setminus y}$ the hole of $G$ induced by $(V(H) \setminus \{y\}) \cup \{u\}$. Note that $y$ is a clone of $u$ in $H_{u \setminus y}$.

\begin{lemma}
Let $H$ be a hole in a graph $G \in \C$, let $v$ be a major vertex for $H$, let $u$ be a clone of $y$ in $H$, and suppose $uv \in E(G)$. If $yv \not \in E(G)$, then $u$ and $v$ have a common neighbor in $H$.
\label{lemma:3}
\end{lemma}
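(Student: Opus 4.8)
The plan is to argue by contradiction. Suppose $u$ and $v$ have no common neighbour in $H$. Since $u$ is a clone of $y$ we have $N_H(u)=\{x,y,z\}$ with $x\dd y\dd z$ a subpath of $H$, and by hypothesis $vy\notin E(G)$; hence ``no common neighbour in $H$'' is equivalent to $v$ being non-adjacent to each of $x,y,z$, i.e. $N_H(v)$ is contained in the arc $R:=V(H)\setminus\{x,y,z\}$ of $H$. Since $v$ is major for $H$, Lemma~\ref{lemma:1} gives $|N_H(v)|\ge 3$, so $R$ contains at least three neighbours of $v$. The goal is then to produce a turtle inside $G[V(H)\cup\{u,v\}]$ whose centers are $u$ and $v$, contradicting $G\in\C$.

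To build the turtle I would split $H$ into two arcs. Walking along $H$ from $y$ through $x$, let $s$ be the last vertex visited before the first vertex of $N_H(v)$ is reached (so possibly $s=x$); symmetrically, walking from $y$ through $z$, let $t$ be the last vertex before $N_H(v)$ is reached (possibly $t=z$). These are well defined because $N_H(v)\subseteq R\neq\emptyset$. Let $P_1$ be the subpath of $H$ from $s$ to $t$ containing $x,y,z$, and let $P_2=H\setminus V(P_1)$, so that $P_2$ is the complementary arc of $H$, with ends $s^+,t^+\in N_H(v)$ (the $H$-neighbours of $s,t$ that are not in $P_1$). By the choice of $s$ and $t$ we have $V(P_1)\cap N_H(v)=\emptyset$, hence $N_H(v)\subseteq V(P_2)$; thus $v$ has at least three neighbours in $P_2$ and none in $P_1$. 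On the other side, $N_H(u)=\{x,y,z\}\subseteq V(P_1)$, so $u$ has exactly three neighbours in $P_1$ and none in $P_2$. Finally $u,v\notin V(H)$, $uv\in E(G)$, $V(P_1)\cup V(P_2)=V(H)$ induces the hole $H$, and $s\sim s^+$, $t\sim t^+$ on $H$. Taking $P_1,P_2$ as the two paths, with $s,t$ as the ends of $P_1$ and $s^+,t^+$ as the corresponding ends of $P_2$, these vertices match up so that $G[V(H)\cup\{u,v\}]$ is a turtle with centers $u$ and $v$ --- the required contradiction. Hence $u$ and $v$ have a common neighbour in $H$.

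The part that needs the most care is the arc split: one must verify that $s$ and $t$ exist (this is exactly where $N_H(v)\neq\emptyset$ enters, which in turn is where Lemma~\ref{lemma:1} is used), that $P_1$ really contains all of $x,y,z$ and meets $N_H(v)$ in nothing, that $P_2$ is itself an arc of $H$ --- so that $P_1$ and $P_2$ are induced paths --- and that the four end-vertices are paired correctly for the conditions $a_1\sim a_2$, $b_1\sim b_2$ of a turtle. The degenerate possibilities $s=x$ or $t=z$ (when $v$ has a neighbour on $H$ adjacent to $x$ or to $z$) should be checked separately, but they cause no trouble, since the turtle only requires each center to have at least three neighbours in its path. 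Note that no finer information about major vertices is needed: the single inequality $|N_H(v)|\ge 3$ suffices, so the two cases of Lemma~\ref{lemma:1} (at least four neighbours, or exactly three pairwise non-adjacent ones) need not be distinguished.
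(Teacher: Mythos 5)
Your proposal is correct and follows essentially the same route as the paper: assume no common neighbour (so $v$ is anticomplete to $\{x,y,z\}$), invoke Lemma~\ref{lemma:1} to get $|N_H(v)|\ge 3$, and exhibit $H\cup\{u,v\}$ as a $uv$-turtle. The paper asserts the turtle in one line, whereas you spell out the arc split that witnesses it; the verification is sound.
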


\begin{proof}
Let $N_H(u) = \{x, y, z\}$. Suppose $uv \in E(G)$ and $yv \not \in E(G)$. Because $v$ is major, by Lemma \ref{lemma:1}, $v$ has at least three neighbors in $H$. If $v$ is anticomplete to $\{x, z\}$, then $H \cup \{u, v\}$ is a $uv$-turtle in $G$, a contradiction. Therefore, $v$ is adjacent to at least one of $x, z$, and so $u$ and $v$ have a common neighbor in $H$.
\end{proof}

Let $u \in G \setminus H$ be a vertex with at least two neighbors in $H$. A \emph{$u$-sector} is a path $P = u' \hdots u''$ such that $P \subseteq H$, $u'$ and $u''$ are neighbors of $u$, and $P^*$ is anticomplete to $u$.

\begin{lemma}
Let $u$ and $v$ be two non-adjacent major vertices for a hole $H$ of a graph $G \in \C$. Let $P = u' \hdots u''$ be a $u$-sector of $H$. Then one of the following holds:
\begin{enumerate}[(i)]
\itemsep-0.2em
\item $P$ contains at most one neighbor of $v$, and if it has one, it is either $u'$ or $u''$,
\item $u'u'' \in E(G)$ and $v$ is adjacent to both $u'$ and $u''$,
\item $P$ contains at least 3 neighbors of $v$,
\item $H \cup \{u, v\}$ is a cube (see Figure \ref{fig:cube}).
\end{enumerate}
\vspace{-\baselineskip}
\vspace{0.2cm}
\label{lemma:4}
\end{lemma}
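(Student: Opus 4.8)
We show that the failure of (i), (ii) and (iii) forces $H \cup \{u, v\}$ to be a cube. Unpacking the hypotheses: as (iii) fails, $v$ has at most two neighbours in $V(P)$; as (i) fails, either $v$ has exactly two neighbours in $V(P)$, or exactly one lying in $P^*$; and as (ii) fails, we do not have both $u'u'' \in E(G)$ and $v$ complete to $\{u', u''\}$. By Lemma~\ref{lemma:1}, each of $u$ and $v$ has at least three neighbours in $H$. Write $H = P \cup Q$, where $Q$ is the subpath of $H$ from $u'$ to $u''$ with $V(P) \cap V(Q) = \{u', u''\}$; since $u$ is anticomplete to $P^*$, $u$ has a neighbour in $Q^* := V(Q) \setminus \{u', u''\}$, and, once we know $v$ has at most one neighbour in $P^*$, $v$ too has a neighbour in $Q^*$. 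The engine of the proof is that, since $u \not\sim v$, exhibiting three induced paths that are pairwise disjoint off their fixed ends and have pairwise anticomplete interiors produces a forbidden subgraph: a theta if the three paths join $u$ and $v$, a pyramid if they join one of $u, v$ to a triangle containing the other, and a prism if they join two disjoint triangles. Moreover, whenever a vertex adjacent to $u$ or $v$ turns out to be a major vertex of an auxiliary hole, Lemma~\ref{lemma:3} yields either the required adjacency or a turtle.

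First we dispose of the two configurations in which $v$ has two neighbours on $V(P)$ --- where, since (ii) fails, the disjunction could only be rescued by (iv), so a contradiction is needed. Because $u$ is anticomplete to $P^*$, the set $V(P) \cup \{u\}$ induces a hole $H_0$ whenever $|V(P)| \geq 3$, and $v$'s neighbours on $H_0$ are exactly its neighbours on $V(P)$. If those two neighbours are $u'$ and $u''$, then $u' \not\sim u''$ (else (ii) holds), so $|V(P)| \geq 3$ and $v$ has exactly two non-adjacent neighbours on $H_0$, whence $H_0 \cup \{v\}$ is a theta. Otherwise $v$ has a neighbour $w \in P^*$, so $|V(P)| \geq 3$; if the two neighbours of $v$ on $V(P)$ were non-adjacent the same argument gives a theta, so they are adjacent, say $w_1w_2 \in E(G)$, forming a triangle $T = \{v, w_1, w_2\}$. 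Splitting $P$ at the edge $w_1w_2$ yields induced paths from $u$ (through $u'$, resp.\ $u''$) to $w_1$, resp.\ $w_2$, and a path from $u$ to $v$ with interior in $Q^*$ then completes a pyramid with apex $u$ and triangle $T$ (or, if $u$ lies in a triangle, a prism between that triangle and $T$). The only point to check is that such a $Q^*$-path is available: if it is blocked, the obstruction forces $u$ to have a neighbour adjacent to $u'$ or to $u''$ on $Q$ --- using that otherwise $u$ would have a neighbour on $P^*$ --- which supplies a triangle at $u$, and one still obtains a prism. Hence $v$ has a unique neighbour $w$ in $V(P)$, it lies in $P^*$, and $v$ is anticomplete to $\{u', u''\}$.

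It remains to treat this last case. Now the two natural $P$-side paths from $u$ to $v$ both pass through $w$, so instead we work with the non-adjacent pair $\{w, u\}$: two induced paths from $w$ to $u$ run along $P$ through $u'$ (resp.\ $u''$) and then to $u$, while a third runs from $w$ to $v$ and then from $v$ to $u$ with interior in $Q^*$ avoiding the $Q$-neighbours $q'$ and $q''$ of $u'$ and $u''$. Since $G$ contains no theta, no such $Q^*$-path exists, and this --- together with the fact that $u$ and $v$ each have a neighbour in $Q^*$ --- tightly restricts how $N(u) \cap V(H)$ and $N(v) \cap V(H)$ interleave around $H$. Pushing this restriction, while using Lemma~\ref{lemma:1} on $v$ (to bound its neighbours in $H$ and to decide whether any two are adjacent) and Lemma~\ref{lemma:3} for the major vertices arising on the sub-holes obtained by rerouting a sector through $u$, forces $|V(H)| = 6$ with $N(u) \cap V(H)$ and $N(v) \cap V(H)$ the two independent sets of size three of $H$; that is, $H \cup \{u, v\}$ is a cube. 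The main obstacle is precisely this interleaving analysis: keeping all three paths' interiors pairwise anticomplete when the relevant $u$- and $v$-neighbours sit next to $u'$ or $u''$ is where the cube degeneracy becomes unavoidable, and it requires a genuine case distinction rather than a single uniform argument.
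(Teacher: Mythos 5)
Your overall strategy matches the paper's (split on whether $v$ has two neighbours or one neighbour in $P$, and hunt for a theta/pyramid/prism), but the write-up has two genuine gaps, and they sit exactly where the lemma is hard.

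First, in the two-neighbour case your connecting path from $u$ to $v$ ``with interior in $Q^*$'' is not available in general, and your fallback does not cover the actual failure mode. With $R=H\setminus P$ and $x,y$ the neighbours of $u',u''$ in $R$, the third path of your pyramid must have its interior inside $R^*=R\setminus\{x,y\}$ (if it passes through $x$ or $y$ it picks up the chord $xu'$ or $yu''$ to the other two paths), so it exists only when \emph{both} $u$ and $v$ have neighbours in $R^*$. You only discuss the obstruction coming from $u$ (``$u$ has a neighbour adjacent to $u'$ or $u''$ on $Q$, which supplies a triangle at $u$''). But the obstruction can come from $v$: then Lemma~\ref{lemma:1} forces $N_H(v)=\{x,v',v'',y\}$ while $u$ may have all its $R$-neighbours deep inside $R^*$, so there is no triangle at $u$ and no prism of the kind you describe. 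The paper closes this subcase with a different construction, a pyramid from $u'$ to the triangle $vv'v''$ using the edges $u'x$ and $xv$ (after first ruling out $ux\in E(G)$ with a prism). Without something like this, the two-neighbour case is not finished.

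Second, and more seriously, the one-neighbour case --- the only case in which outcome (iv) can actually occur --- is asserted rather than proved. You correctly identify that a theta on $\{u,v,w,u',u''\}$ plus a path through $R^*$ must be blocked, and that this constrains the interleaving of $N_H(u)$ and $N_H(v)$, but you then write that ``pushing this restriction \dots forces $H\cup\{u,v\}$ to be a cube'' and that ``it requires a genuine case distinction'' without performing it. The paper's argument here consists of concrete steps you would need to reproduce: show $ux,uy\notin E(G)$ (each via an explicit pyramid), conclude $N_H(v)=\{x,v',y\}$; use that $u$ is major to get $u'''\in R^*$ and derive $xu''',yu'''\in E(G)$ (each via a theta), so $R$ has length two; and finally derive $v'u',v'u''\in E(G)$ (each via a theta), so $P$ has length two. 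Only then does the cube emerge. Also, your appeal to Lemma~\ref{lemma:3} plays no role in any correct version of this argument (the paper's proof never uses it), which suggests the plan for this case was not actually worked out. As it stands the proposal is an outline of the right approach with the decisive computations missing.
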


\begin{figure}[ht]
\begin{center}
\begin{tikzpicture}[scale=0.3]

\node[inner sep=2.5pt, fill=black, circle] at (0, 6)(v1){}; 
\node[inner sep=2.5pt, fill=black, circle] at (4, 6)(v2){}; 
\node[inner sep=2.5pt, fill=black, circle] at (6, 4)(v3){}; 
\node[inner sep=2.5pt, fill=black, circle] at (6, 0)(v4){}; 
\node[inner sep=2.5pt, fill=black, circle] at (2, 0)(v5){}; 
\node[inner sep=2.5pt, fill=black, circle] at (0, 2)(v6){}; 
\node[inner sep=2.5pt, fill=black, circle] at (2, 4)(v7){};
\node[inner sep=2.5pt, fill=black, circle] at (4, 2)(v8){};

\node[inner sep=2.5pt, fill=white, circle] at (1, -0.2)(v21){}; 

\draw[black, thick] (v1) -- (v2);
\draw[black, thick] (v2) -- (v3);
\draw[black, thick] (v3) -- (v4);
\draw[black, thick] (v4) -- (v5);
\draw[black, thick] (v5) -- (v6);
\draw[black, thick] (v1) -- (v6);
\draw[black, thick] (v1) -- (v7);
\draw[black, thick] (v3) -- (v7);
\draw[black, thick] (v5) -- (v7);
\draw[black, thick] (v2) -- (v8);
\draw[black, thick] (v4) -- (v8);
\draw[black, thick] (v6) -- (v8);

\end{tikzpicture}
\end{center}
\vspace{-0.4cm}
\caption{The cube graph}
\label{fig:cube}
\end{figure}
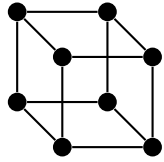

\begin{proof}
Let $R = H \setminus P$, let $x$ be the neighbor of $u'$ in $R$, and let $y$ be the neighbor of $u''$ in $R$. First, suppose that $P$ contains exactly two neighbors of $v$. We may assume that $u'u'' \not \in E(G)$, otherwise outcome (ii) holds. Let the neighbors of $v$ in $P$ be given by $v', v''$. Then, $G$ contains a theta between $v'$ and $v''$ in $P \cup \{u, v\}$ unless $v'v'' \in E(G)$, so $v'v'' \in E(G)$. Since either $v' \neq u', u''$ or $v'' \neq u', u''$, we may assume that $v' \neq u', u''$ and $v'$ is between $u'$ and $v''$ in $P$ (possibly $v'' = u''$). If both $u$ and $v$ have neighbors in $R^*$, then $G$ contains a pyramid from $u$ to $vv'v''$ via paths through $u'$, $u''$, and $R^*$, so at least one of $u, v$ has no neighbors in $R^*$. By Lemma \ref{lemma:1}, either $N_H(u) = \{x, u', u'', y\}$ or $N_H(v) = \{x, v', v'', y\}$. Since $v$ is major, $v$ has a neighbor in $R \setminus y$. Let $v'''$ be the neighbor of $v$ in $R \setminus y$ closest to $x$. If $ux \in E(G)$, then $G$ contains a prism from $uxu'$ to $v''vv'$ through the paths $u \dd u'' \dd P \dd v''$, $u' \dd P \dd v'$, and $x \dd R \dd v''' \dd v$, so $ux \notin E(G)$ and thus $N_H(v) = \{x, v', v'', y\}$. Now, $G$ contains a pyramid from $u'$ to $vv'v''$ via the paths $u' \dd x \dd v$, $u' \dd u \dd u'' \dd P \dd v''$, and $u' \dd P \dd v'$, a contradiction.

Now, suppose $P$ contains exactly one neighbor $v'$ of $v$. We may assume that $v' \neq u', u''$, otherwise outcome (i) holds. If $u$ and $v$ both have neighbors in $R^*$, then $G$ contains a theta between $u$ and $v'$ through $u'$, $u''$, and $R^* \cup \{v\}$, so at least one of $u, v$ has no neighbors in $R^*$. By Lemma \ref{lemma:1}, either $N_H(u) = \{x, u', u'', y\}$ or $N_H(v) = \{x, v', y\}$. Since $v$ is major, $v$ has a neighbor in $R \setminus y$. Let $v''$ be the neighbor of $v$ in $R$ closest to $x$. If $ux \in E(G)$, then $G$ contains a pyramid from $v'$ to $u'ux$ through the paths $v' \dd P \dd u'$, $v' \dd v \dd v'' \dd R \dd x$, and $v' \dd P \dd u'' \dd u$, so $ux \not \in E(G)$. By symmetry, $uy \not \in E(G)$. It follows that $N_H(v) = \{x, v', y\}$. Because $u$ is major, $u$ has a neighbor in $R^*$. Let $u'''$ be the neighbor of $u$ in $R^*$ closest to $y$. Then $G$ contains a theta between $u$ and $y$ through the paths $u \dd u'' \dd y$, $u \dd u' \dd x \dd v \dd y$, and $u \dd u''' \dd R \dd y$, unless $xu''' \in E(G)$. Similarly, $G$ contains a theta between $u$ and $x$ unless $yu''' \in E(G)$. If $u'v' \notin E(G)$, then $G$ contains a theta between $u'$ and $v'$ through the paths $u' \dd P \dd v'$, $u' \dd x \dd v \dd v'$, and $u' \dd u \dd u'' \dd P \dd v'$, so $v'u' \in E(G)$. Similarly, $v'u'' \in E(G)$. Now, $H \cup \{u, v\}$ is a cube, and so outcome (iv) holds. This completes the proof.
\end{proof}

Let $H$ be a hole in a graph $G$ and let $u, v \in V(G) \setminus V(H)$. We say that $u$ and $v$ are \emph{nested} with respect to $H$ if there exist distinct $a, b \in V(H)$ such that one $ab$-path of $H$ contains all the neighbors of $u$ and the other $ab$-path of $H$ contains all the neighbors of $v$. Note that pendants, caps, and vertices of $G \setminus H$ with no neighbor in $H$ are nested with all other vertices of $G\setminus H$. The vertices $u$ and $v$ are \emph{strictly nested} with respect to $H$ if $u$ and $v$ are nested with respect to $H$ and $N_H(u) \cap N_H(v) = \emptyset$.

\begin{lemma}
Let $H$ be a hole in a graph $G \in \C$, let $u$ and $v$ be major or clones for $H$ such that $u$ and $v$ are nested, and suppose $uv \in E(G)$. Then, $u$ and $v$ have a common neighbor in $H$.
\label{lemma:5}
\end{lemma}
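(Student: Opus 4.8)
The plan is to argue by contradiction: suppose $u$ and $v$ have no common neighbour in $H$, that is, $N_H(u)\cap N_H(v)=\emptyset$, and produce a turtle in $G$, contradicting $G\in\C$. Combined with the hypothesis that $u$ and $v$ are nested, the contradiction hypothesis says precisely that $u$ and $v$ are strictly nested: there are distinct $a,b\in V(H)$ whose two $ab$-paths in $H$ are $Q_1\supseteq N_H(u)$ and $Q_2\supseteq N_H(v)$, with $Q_1\cup Q_2=H$ and $Q_1\cap Q_2=\{a,b\}$. Note also that, since $u$ and $v$ are major or clones, Lemma~\ref{lemma:1} (for a major vertex) together with the definition of a clone gives $|N_H(u)|\ge 3$ and $|N_H(v)|\ge 3$.

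The key step is to isolate the arc of $H$ carrying $N_H(u)$ and to check that $v$ avoids it. Let $A$ be the shortest subpath of $Q_1$ containing $N_H(u)$; then $A$ is an induced path of $H$ on at least three vertices, $N_H(u)\subseteq A$, and the two ends of $A$ are the first and last vertices of $Q_1$ (read from $a$ to $b$) lying in $N_H(u)$. I claim $N_H(v)\cap A=\emptyset$. Since $A\subseteq Q_1$, any vertex of $N_H(v)\cap A$ lies in $Q_1\cap Q_2=\{a,b\}$; but if $a\in A$ then, by the description of the ends of $A$, we get $a\in N_H(u)$, hence $a\in N_H(u)\cap N_H(v)=\emptyset$, a contradiction, and symmetrically $b\notin N_H(v)\cap A$. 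So $N_H(v)\cap A=\emptyset$, whence all of $N_H(v)$ (at least three vertices) lies in $H\setminus A$; in particular $A\neq H$.

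It then remains to read off the turtle. Put $P_1=A$ and $P_2=H\setminus A$; removing a contiguous arc from the cycle $H$ leaves an induced path, and $P_2$ is non-trivial since it contains $N_H(v)$, so $P_2$ has at least three vertices. Let $a_1,b_1$ be the ends of $P_1$ and $a_2,b_2$ the ends of $P_2$, labelled so that $a_1a_2$ and $b_1b_2$ are the two edges of $H$ joining the arcs $A$ and $H\setminus A$ (here $a_2\neq b_2$, as $|V(P_2)|\ge 3$). Take $x=u$ and $y=v$. Then $P_1,P_2$ are vertex-disjoint paths, $a_1a_2,b_1b_2\in E(G)$, $V(P_1)\cup V(P_2)=V(H)$ induces the hole $H$ (so no unwanted chords appear), $u,v\notin V(H)$, $uv\in E(G)$, the vertex $u$ has at least three neighbours in $P_1$ (all of $N_H(u)$) and none in $P_2$ (as $N_H(u)\subseteq A$), and $v$ has at least three neighbours in $P_2$ (all of $N_H(v)$) and none in $P_1$ (by the claim). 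Thus $G[V(H)\cup\{u,v\}]$ is an $xy$-turtle, contradicting $G\in\C$; hence $u$ and $v$ have a common neighbour in $H$.

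The construction is short, so I expect the only delicate point to be the claim $N_H(v)\cap A=\emptyset$: this is exactly where the nestedness hypothesis is used, since without it $v$ could have a neighbour strictly between two neighbours of $u$ along $H$, destroying the condition that $y$ has no neighbour in $P_1$. The remaining verifications are routine; the one worth flagging is that $P_2=H\setminus A$ is a genuine path with $a_2\neq b_2$, which holds because $P_2$ contains the at least three vertices of $N_H(v)$.
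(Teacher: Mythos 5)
Your proof is correct and follows exactly the paper's argument: the paper also notes that $u$ and $v$ each have at least three neighbors in $H$ and that, absent a common neighbor, $H\cup\{u,v\}$ is a $uv$-turtle. You have merely written out the verification (the choice of the arc $A$ and the check that $N_H(v)\cap A=\emptyset$) that the paper leaves implicit.
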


\begin{proof} 
Since $u$ and $v$ are major or clones, $u$ and $v$ have at least three neighbors in $H$. If $u$ and $v$ have no common neighbors in $H$, then $H \cup \{u, v\}$ is a $uv$-turtle in $G$, a contradiction.
\end{proof}

Two vertices $u$ and $v$ not in $H$ that are not nested with respect to $H$ are said to \emph{cross}. The following lemmas characterize the behavior of vertices that cross. 

\begin{lemma}
Let $H$ be a hole in a graph $G \in \C$ and let $u$ and $v$ be two vertices not in $H$ that cross. Then, one of the following holds: 
\begin{enumerate}[(i)]
\itemsep-0.2em
    \item $H$ contains four distinct vertices $u', v', u'', v''$ that appear in this order along $H$ such that $u', u'' \in N_H(u)$ and $v', v'' \in N_H(v)$,
    \item $N_H(u) = N_H(v)$, $N_H(u)$ is an independent set, $|N_H(u)| = 3$, and $uv \in E(G)$,
    \item $N_H(u) = N_H(v)$, both $u$ and $v$ are clones in $H$, and $uv \in E(G)$.
\end{enumerate}
\vspace{-\baselineskip}
\vspace{0.2cm}
\label{lemma:6}
\end{lemma}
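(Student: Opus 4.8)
The plan is to first pin down what $u$ and $v$ can look like and then split on whether outcome~(i) holds. Since pendants, caps, and vertices of $G\setminus H$ with no neighbour in $H$ are nested with every other vertex of $G\setminus H$, the hypothesis that $u$ and $v$ cross forces each of them to have a neighbour in $H$ and to be neither a pendant nor a cap; hence by Lemmas~\ref{lemma:1} and~\ref{lemma:2} each of $u,v$ is major or a clone for $H$, and in particular $|N_H(u)|\geq 3$ and $|N_H(v)|\geq 3$. The core of the proof is then to show that if outcome~(i) fails, then $N_H(u)=N_H(v)$ and this common set has size exactly three; given that, outcomes~(ii) and~(iii) fall out from a short local analysis.

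Assume outcome~(i) fails; I would prove $N_H(u)=N_H(v)$ by contradiction. Suppose $b\in N_H(v)\setminus N_H(u)$, and let $S$ be the $u$-sector whose interior contains $b$, with ends $s',s''$. First, no vertex of $N_H(v)$ can lie in the interior of a $u$-sector other than $S$: such a vertex $c$, together with $b$, $s'$, $s''$, would give four distinct vertices of $H$ occurring in cyclic order $s',b,s'',c$ --- with $s',s''\in N_H(u)$ and $b,c\in N_H(v)$ --- which is exactly outcome~(i). Hence $N_H(v)\setminus N_H(u)$ is contained in the interior of $S$. Second, no vertex $c\in N_H(v)$ can lie outside $V(S)$: such a $c$ would satisfy $c\in N_H(u)$ by the previous step, and again $s',b,s'',c$ would witness~(i). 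Therefore $N_H(v)\subseteq V(S)$, and since the interior of $S$ is anticomplete to $u$ we also get $N_H(u)\subseteq V(S')$, where $S'$ is the other $s's''$-path of $H$; but then $s',s''$ witness that $u$ and $v$ are nested, contrary to hypothesis. So $N_H(u)=N_H(v)=:W$, and if $|W|\geq 4$ then any four distinct vertices of $W$ taken in cyclic order along $H$ witness~(i), so $|W|=3$. I expect this paragraph to demand the most care, since everything hinges on making the cyclic-order assertions precise and on verifying that the four vertices produced are genuinely distinct --- routine, but easy to fumble.

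It remains to show $uv\in E(G)$: then outcome~(ii) holds if $W$ is independent and outcome~(iii) holds if $W$ induces a path, and these are the only cases, since a vertex whose three neighbours in $H$ induce exactly one edge is neither minor (contradicting Lemma~\ref{lemma:2}) nor major (contradicting Lemma~\ref{lemma:1}). Suppose for contradiction that $uv\notin E(G)$. If $W=\{x,y,z\}$ induces the path $x\dd y\dd z$, then $u$ and $v$ are both clones of $y$, and in the hole $H_{u\setminus y}$ the vertex $v$ has exactly the two neighbours $x$ and $z$, which are non-adjacent; then $H_{u\setminus y}\cup\{v\}$ is a theta between $x$ and $z$, via $x\dd u\dd z$, $x\dd v\dd z$, and the remaining $xz$-path of $H_{u\setminus y}$ --- contradicting $G\in\C$. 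If instead $W=\{w_1,w_2,w_3\}$ is independent, then $u$ and $v$ are non-adjacent major vertices, and I would apply Lemma~\ref{lemma:4} to $u$, $v$, and the $u$-sector $P$ with ends $w_1,w_2$: the only neighbours of $v$ on $P$ are $w_1$ and $w_2$, and $w_1w_2\notin E(G)$, so outcomes (i)--(iii) of Lemma~\ref{lemma:4} are all impossible, whence $H\cup\{u,v\}$ is a cube. But $u$ and $v$ each have three neighbours in $H$, so neither lies on $H$; hence $\{u,v\}$ is exactly the pair of vertices of the cube not on $H$, and these two vertices have disjoint neighbourhoods in $H$, contradicting $N_H(u)=N_H(v)$. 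Thus $uv\in E(G)$, which finishes the proof.
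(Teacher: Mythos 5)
Your proof is correct. It follows the same broad strategy as the paper's --- show that failure of outcome (i) forces $N_H(u)=N_H(v)$ with exactly three elements, then derive a forbidden structure when $uv\notin E(G)$ --- but the route differs in two places. First, the paper begins by case-splitting on whether each of $u,v$ is a clone or major, and runs a sector argument only in the major--major case (placing a private neighbour of $u$ inside a $v$-sector and using crossing to find a neighbour of $u$ outside that sector); your single $u$-sector argument handles clones and major vertices uniformly, which is a mild simplification. Second, in the independent-triple case the paper exhibits a theta between two of the three common neighbours directly inside $(H\setminus\{z\})\cup\{u,v\}$, whereas you invoke Lemma~\ref{lemma:4} to conclude that $H\cup\{u,v\}$ is a cube and then observe that the two off-hole vertices of a cube have disjoint (indeed complementary) neighbourhoods on the $6$-hole, contradicting $N_H(u)=N_H(v)$. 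Both work; yours reuses machinery already established at the cost of a slightly longer detour, while the paper's is more self-contained. The sector argument itself is sound: the distinctness and cyclic-order checks you flag as the delicate point do go through, since $b$ lies in $S^*$ while any offending $c$ lies off $V(S)$, and the nestedness contradiction at the end is exactly the definition with $a=s'$, $b=s''$.
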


\begin{proof}
Since $u$ and $v$ are not nested, $u$ and $v$ are major or clones for $H$. Suppose $u$ and $v$ are both clones. Since $u$ and $v$ cross, it follows that either $u$ and $v$ are clones of adjacent vertices, so outcome (i) holds, or $u$ and $v$ are clones of the same vertex. Let $u$ and $v$ both be clones of $y$ and let $N_H(u) = N_H(v) = \{x, y, z\}$. Then, $G$ contains a theta between $x$ and $z$ in $H_{u \setminus y} \cup \{v\}$ unless $uv \in E(G)$, so outcome (iii) holds. Now, suppose $u$ is a clone of $y$ and $v$ is major. Because $u$ and $v$ cross, $vy \in E(G)$, and because $v$ is major, $v$ has at least one neighbor in $V(H) \setminus N_H(u)$. Therefore, outcome (i) holds. Finally, suppose $u$ and $v$ are both major. Assume that $u$ has a neighbor $x$ in $H$ such that $vx \not \in E(G)$. Then, $x$ is contained in a $v$-sector $P = v' \hdots v''$ of $H$. Because $u$ and $v$ cross, $u$ has a neighbor in $H \setminus P$, so outcome (i) holds. Therefore, we may assume that $N_H(u) = N_H(v)$. If $|N_H(u)| > 3$, outcome (i) holds, so $|N_H(u)| = 3$. Because $u$ is major and $|N_H(u)| = 3$, it follows from Lemma \ref{lemma:1} that $N_H(u)$ is an independent set. Let $N_H(u) = \{x, y, z\}$. Then, $G$ contains a theta between $x$ and $y$ in $(H \setminus \{z\}) \cup \{u, v\}$ unless $uv \in E(G)$, so outcome (ii) holds.
\end{proof}

Let $H = h_1 \dd h_2 \dd \dots \dd h_k \dd h_1$ be a hole in a graph $G \in \C$ and let $u, v \in V(G) \setminus V(H)$ be two non-adjacent major vertices for $H$. The following lemma shows that if $u$ and $v$ cross, then $H \cup \{u, v\}$ is a \emph{major non-adjacent cross (MNC) configuration}. We describe MNC configurations as follows (see also Figure \ref{fig:MNC_config}).
\begin{itemize}
\itemsep-0.1em
\item MNC configuration (1): $k=4$, and $\{u ,v\}$ are complete to $H$.

\item MNC configuration (2): $k=5$, and $\{u ,v\}$ are complete to $H$.

\item MNC configuration (3): $k=6$, $N_H(u) = \{h_1, h_3, h_5\}$, and $N_H(v) = \{h_2, h_4, h_6\}$.

\item MNC configuration (4): $\{u, v\}$ is complete to $\{h_1, h_2, h_3, h_4\}$, $v$ has no other neighbors in $H$, and $u$ has at least one other neighbor in $H$.

\medskip

Let $3 < i < k-1$. Let $H_1$ be the path from $h_1$ to $h_{i+1}$ in $H \setminus \{h_2\}$ and let $H_2$ be the path from $h_2$ to $h_i$ in $H \setminus \{h_1\}$.

\medskip

\item MNC configuration (5): $\{u, v\}$ is complete to $\{h_1, h_2, h_i, h_{i+1}\}$, $u$ and $v$ both have other neighbors in $H$, $N_H(u) \subseteq H_1 \cup \{h_2, h_i\}$, and $N_H(v) \subseteq H_2 \cup \{h_1, h_{i+1}\}$.

\item MNC configuration (6): $\{u, v\}$ is complete to $\{h_1, h_2, h_i, h_{i+1}\}$, $v$ has no other neighbors in $H$, and $u$ has neighbors both in $H_1^*$ and $H_2^*$.

\item MNC configuration (7): $\{u, v\}$ is complete to $\{h_1, h_2, h_i\}$, $u$ and $v$ both have other neighbors in $H$, $N_H(u) \subseteq H_1 \cup \{h_2, h_i\}$, and $N_H(v) \subseteq H_2 \cup \{h_1\}$.

\item MNC configuration (8): $\{u, v\}$ is complete to $\{h_1, h_2\}$, $u$ and $v$ both have other neighbors in $H$, $N_H(u) \subseteq H_1 \cup \{h_2\}$, and $N_H(v) \subseteq H_2 \cup \{h_1\}$.
\end{itemize}

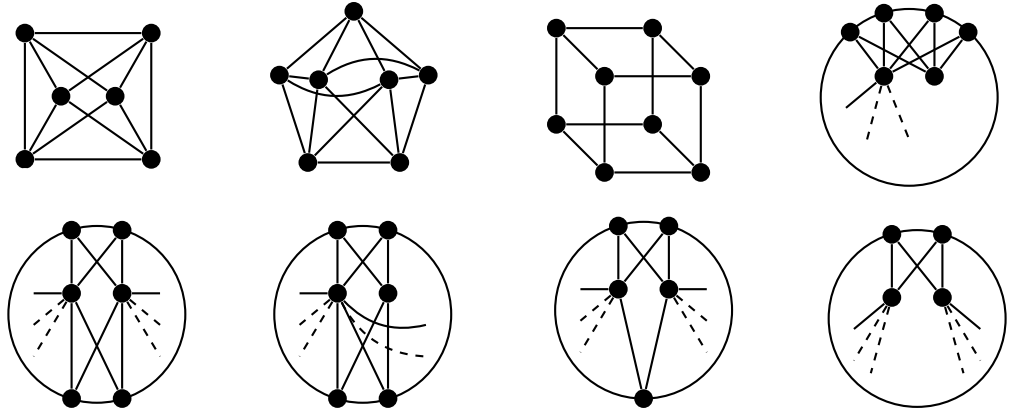
\begin{figure}[ht]
\begin{center}
\begin{tikzpicture}[scale=0.24]

\node[inner sep=2.5pt, fill=black, circle] at (0,0)(v1){}; 
\node[inner sep=2.5pt, fill=black, circle] at (7, 0)(v2){}; 
\node[inner sep=2.5pt, fill=black, circle] at (7, -7)(v3){}; 
\node[inner sep=2.5pt, fill=black, circle] at (0, -7)(v4){};
\node[inner sep=2.5pt, fill=black, circle] at (2, -3.5)(v6){}; 
\node[inner sep=2.5pt, fill=black, circle] at (5, -3.5)(v7){}; 

\node[inner sep=2.5pt, fill=white, circle] at (0, -8)(v21){}; 

\draw[black, thick] (v1) -- (v2);
\draw[black, thick] (v2) -- (v3);
\draw[black, thick] (v3) -- (v4);
\draw[black, thick] (v4) -- (v1);
\draw[black, thick] (v1) -- (v6);
\draw[black, thick] (v2) -- (v6);
\draw[black, thick] (v3) -- (v6);
\draw[black, thick] (v4) -- (v6);
\draw[black, thick] (v1) -- (v7);
\draw[black, thick] (v2) -- (v7);
\draw[black, thick] (v3) -- (v7);
\draw[black, thick] (v4) -- (v7);

\end{tikzpicture}
\hspace{1.2cm}
\begin{tikzpicture}[scale=0.26]

\node[inner sep=2.5pt, fill=black, circle] at (0,4.5)(v1){}; 
\node[inner sep=2.5pt, fill=black, circle] at (3.804226, 1.2361)(v2){}; 
\node[inner sep=2.5pt, fill=black, circle] at (2.351141, -3.2361)(v3){}; 
\node[inner sep=2.5pt, fill=black, circle] at (-2.351141, -3.2361)(v4){}; 
\node[inner sep=2.5pt, fill=black, circle] at (-3.804226, 1.2361)(v5){};
\node[inner sep=2.5pt, fill=black, circle] at (1.8, 1)(v6){}; 
\node[inner sep=2.5pt, fill=black, circle] at (-1.8, 1)(v7){}; 

\node[inner sep=2.5pt, fill=white, circle] at (0, -4)(v21){}; 

\draw[black, thick] (v1) -- (v2);
\draw[black, thick] (v1) -- (v5);
\draw[black, thick] (v2) -- (v3);
\draw[black, thick] (v3) -- (v4);
\draw[black, thick] (v4) -- (v5);
\draw[black, thick] (v1) -- (v6);
\draw[black, thick] (v2) -- (v6);
\draw[black, thick] (v3) -- (v6);
\draw[black, thick] (v4) -- (v6);
\draw[black, thick] (v1) -- (v7);
\draw[black, thick] (v3) -- (v7);
\draw[black, thick] (v4) -- (v7);
\draw[black, thick] (v5) -- (v7);

\draw[black, thick](v2) edge [bend right=30] (v7);
\draw[black, thick](v5) edge [bend right=30] (v6);

\end{tikzpicture}
\hspace{1.2cm}
\begin{tikzpicture}[scale=0.32]

\node[inner sep=2.5pt, fill=black, circle] at (0, 6)(v1){}; 
\node[inner sep=2.5pt, fill=black, circle] at (4, 6)(v2){}; 
\node[inner sep=2.5pt, fill=black, circle] at (6, 4)(v3){}; 
\node[inner sep=2.5pt, fill=black, circle] at (6, 0)(v4){}; 
\node[inner sep=2.5pt, fill=black, circle] at (2, 0)(v5){}; 
\node[inner sep=2.5pt, fill=black, circle] at (0, 2)(v6){}; 
\node[inner sep=2.5pt, fill=black, circle] at (2, 4)(v7){};
\node[inner sep=2.5pt, fill=black, circle] at (4, 2)(v8){};

\node[inner sep=2.5pt, fill=white, circle] at (1, -0.2)(v21){}; 

\draw[black, thick] (v1) -- (v2);
\draw[black, thick] (v2) -- (v3);
\draw[black, thick] (v3) -- (v4);
\draw[black, thick] (v4) -- (v5);
\draw[black, thick] (v5) -- (v6);
\draw[black, thick] (v1) -- (v6);
\draw[black, thick] (v1) -- (v7);
\draw[black, thick] (v3) -- (v7);
\draw[black, thick] (v5) -- (v7);
\draw[black, thick] (v2) -- (v8);
\draw[black, thick] (v4) -- (v8);
\draw[black, thick] (v6) -- (v8);

\end{tikzpicture}
\hspace{1.2cm}
\begin{tikzpicture}[scale=0.28]

\node[inner sep=2.5pt, fill=black, circle] at (-1.2, 4)(v2){}; 
\node[inner sep=2.5pt, fill=black, circle] at (1.2, 4)(v3){}; 

\node[inner sep=2.5pt, fill=black, circle] at (-2.8, 3.1)(v1){}; 
\node[inner sep=2.5pt, fill=black, circle] at (2.8, 3.1)(v4){}; 

\node[inner sep=2.5pt, fill=black, circle] at (-1.2, 1)(v5){}; 
\node[inner sep=2.5pt, fill=black, circle] at (1.2, 1)(v6){};

\draw[black, thick] (v5) -- (v1);
\draw[black, thick] (v6) -- (v1);
\draw[black, thick] (v5) -- (v4);
\draw[black, thick] (v6) -- (v4);
\draw[black, thick] (v5) -- (v2);
\draw[black, thick] (v6) -- (v2);
\draw[black, thick] (v5) -- (v3);
\draw[black, thick] (v6) -- (v3);

\draw[black, thick] (v5) -- (-3, -0.5);
\draw[black, dashed, thick] (v5) -- (-2, -2);
\draw[black, dashed, thick] (v5) -- (0, -2);

\draw[black, thick] (0,0) circle (4.2cm);

\end{tikzpicture}
\hspace{1cm}
\begin{tikzpicture}[scale=0.28]

\node[inner sep=2.5pt, fill=black, circle] at (-1.2, 4)(v2){}; 
\node[inner sep=2.5pt, fill=black, circle] at (1.2, 4)(v3){}; 

\node[inner sep=2.5pt, fill=black, circle] at (-1.2, -4)(v1){}; 
\node[inner sep=2.5pt, fill=black, circle] at (1.2, -4)(v4){}; 

\node[inner sep=2.5pt, fill=black, circle] at (-1.2, 1)(v5){}; 
\node[inner sep=2.5pt, fill=black, circle] at (1.2, 1)(v6){}; 

\draw[black, thick] (v5) -- (v1);
\draw[black, thick] (v6) -- (v1);
\draw[black, thick] (v5) -- (v4);
\draw[black, thick] (v6) -- (v4);
\draw[black, thick] (v5) -- (v2);
\draw[black, thick] (v6) -- (v2);
\draw[black, thick] (v5) -- (v3);
\draw[black, thick] (v6) -- (v3);

\draw[black, dashed, thick] (v5) -- (-3, -0.5);
\draw[black, dashed, thick] (v5) -- (-3, -2);
\draw[black, thick] (v5) -- (-3, 1);

\draw[black, dashed, thick] (v6) -- (3, -0.5);
\draw[black, dashed, thick] (v6) -- (3, -2);
\draw[black, thick] (v6) -- (3, 1);

\draw[black, thick] (0,0) circle (4.2cm);

\end{tikzpicture}
\hspace{0.9cm}
\begin{tikzpicture}[scale=0.28]

\node[inner sep=2.5pt, fill=black, circle] at (-1.2, 4)(v2){}; 
\node[inner sep=2.5pt, fill=black, circle] at (1.2, 4)(v3){}; 

\node[inner sep=2.5pt, fill=black, circle] at (-1.2, -4)(v1){}; 
\node[inner sep=2.5pt, fill=black, circle] at (1.2, -4)(v4){}; 

\node[inner sep=2.5pt, fill=black, circle] at (-1.2, 1)(v5){}; 
\node[inner sep=2.5pt, fill=black, circle] at (1.2, 1)(v6){}; 

\node[inner sep=2.5pt, fill=white, circle] at (0, 5.5)(v21){}; 

\draw[black, thick] (v5) -- (v1);
\draw[black, thick] (v6) -- (v1);
\draw[black, thick] (v5) -- (v4);
\draw[black, thick] (v6) -- (v4);
\draw[black, thick] (v5) -- (v2);
\draw[black, thick] (v6) -- (v2);
\draw[black, thick] (v5) -- (v3);
\draw[black, thick] (v6) -- (v3);

\draw[black, dashed, thick] (v5) -- (-3, -0.5);
\draw[black, dashed, thick] (v5) -- (-3, -2);
\draw[black, thick] (v5) -- (-3, 1);

\draw[black, thick] (v5) edge [bend right=30] (3, -0.5);
\draw[black, dashed, thick] (v5) edge [bend right=30] (3, -2);

\draw[black, thick] (0,0) circle (4.2cm);

\end{tikzpicture}
\hspace{1.1cm}
\begin{tikzpicture}[scale=0.28]

\node[inner sep=2.5pt, fill=black, circle] at (0, -4.2)(v1){}; 
\node[inner sep=2.5pt, fill=black, circle] at (-1.2, 4)(v2){}; 
\node[inner sep=2.5pt, fill=black, circle] at (1.2, 4)(v3){}; 

\node[inner sep=2.5pt, fill=black, circle] at (-1.2, 1)(v5){}; 
\node[inner sep=2.5pt, fill=black, circle] at (1.2, 1)(v6){}; 

\draw[black, thick] (v5) -- (v1);
\draw[black, thick] (v6) -- (v1);
\draw[black, thick] (v5) -- (v2);
\draw[black, thick] (v6) -- (v2);
\draw[black, thick] (v5) -- (v3);
\draw[black, thick] (v6) -- (v3);

\draw[black, dashed, thick] (v5) -- (-3, -0.5);
\draw[black, dashed, thick] (v5) -- (-3, -2);
\draw[black, thick] (v5) -- (-3, 1);

\draw[black, dashed, thick] (v6) -- (3, -0.5);
\draw[black, dashed, thick] (v6) -- (3, -2);
\draw[black, thick] (v6) -- (3, 1);

\draw[black, thick] (0,0) circle (4.2cm);

\end{tikzpicture}
\hspace{1cm}
\begin{tikzpicture}[scale=0.28]

\node[inner sep=2.5pt, fill=black, circle] at (-1.2, 4)(v2){}; 
\node[inner sep=2.5pt, fill=black, circle] at (1.2, 4)(v3){}; 

\node[inner sep=2.5pt, fill=black, circle] at (-1.2, 1)(v5){}; 
\node[inner sep=2.5pt, fill=black, circle] at (1.2, 1)(v6){}; 

\draw[black, thick] (v5) -- (v2);
\draw[black, thick] (v6) -- (v2);
\draw[black, thick] (v5) -- (v3);
\draw[black, thick] (v6) -- (v3);

\draw[black, thick] (v5) -- (-3, -0.5);
\draw[black, dashed, thick] (v5) -- (-3, -2);
\draw[black, dashed, thick] (v5) -- (-2.2, -2.6);

\draw[black, thick] (v6) -- (3, -0.5);
\draw[black, dashed, thick] (v6) -- (3, -2);
\draw[black, dashed, thick] (v6) -- (2.2, -2.6);

\draw[black, thick] (0,0) circle (4.2cm);

\end{tikzpicture}
\end{center}
\vspace{-0.1cm}
\caption{MNC configurations (dashed lines represent possible edges)}
\label{fig:MNC_config}
\end{figure}

\begin{lemma}
Let $H$ be a hole in a graph $G \in \C$ and suppose that $u$ and $v$ are two major vertices for $H$. If $uv \in E(G)$, then either $u$ and $v$ cross or $u$ and $v$ have a common neighbor in $H$. If $u$ and $v$ cross, then either $uv \in E(G)$ or $G$ contains an MNC configuration.
\label{lemma:7}
\end{lemma}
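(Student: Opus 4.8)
The first assertion follows at once from Lemma~\ref{lemma:5}: if $uv \in E(G)$ and $u,v$ do not cross, then they are nested with respect to $H$, and since major vertices are in particular ``major or clones'', Lemma~\ref{lemma:5} supplies a common neighbor of $u$ and $v$ in $H$. For the second assertion, assume throughout that $u$ and $v$ cross and $uv \notin E(G)$; we must show that $H \cup \{u,v\}$ is one of the MNC configurations. By Lemma~\ref{lemma:6}, outcomes (ii) and (iii) are impossible since they require $uv \in E(G)$, so outcome (i) holds: there are distinct vertices $u',v',u'',v''$ appearing in this cyclic order along $H$ with $u',u'' \in N_H(u)$ and $v',v'' \in N_H(v)$. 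By Lemma~\ref{lemma:1}, each of $u$ and $v$ has at least four neighbors in $H$, or exactly three pairwise non-adjacent ones.

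The main tool is Lemma~\ref{lemma:4}, applied (with the two major vertices being $u$ and $v$) to the $u$-sectors of $H$, and symmetrically to the $v$-sectors. For a $u$-sector $P = p \dd \cdots \dd q$ it gives one of: $P$ contains at most one neighbor of $v$, and that one (if present) is $p$ or $q$; $P = pq$ is an edge and $v$ is complete to $\{p,q\}$; $P$ contains at least three neighbors of $v$; or $H \cup \{u,v\}$ is a cube. In the last case $H$ has length $6$ and $\{N_H(u), N_H(v)\} = \{\{h_1,h_3,h_5\}, \{h_2,h_4,h_6\}\}$, which is exactly MNC configuration (3); so we may assume this never occurs, and likewise for $v$-sectors. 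Hence in every $u$-sector the neighbors of $v$ either all lie among the two ends or number at least three, and symmetrically.

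Suppose first that $N_H(u) = N_H(v) =: S$. If $|S| = 3$, then $S = \{x,y,z\}$ is independent by Lemma~\ref{lemma:1}, and the internally disjoint paths $u \dd x \dd v$, $u \dd y \dd v$, $u \dd z \dd v$ form a theta between $u$ and $v$ (any two of them induce a hole, since $uv \notin E(G)$ and $x,y,z$ are pairwise non-adjacent), a contradiction; so $|S| \ge 4$. Then in each $u$-sector the neighbors of $v$ are precisely its two ends, so by the previous paragraph each $u$-sector is an edge; that is, $u$ --- and likewise $v$ --- is complete to $H$ and $S = V(H)$. If $H$ had an independent set of size three we would again find a theta between $u$ and $v$ as above, so $H$ has independence number $2$, hence $|V(H)| \le 5$, and $H \cup \{u,v\}$ is MNC configuration (1) if $|V(H)| = 4$ and (2) if $|V(H)| = 5$.

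We may thus assume, after possibly swapping $u$ and $v$, that some vertex of $N_H(v)$ lies outside $N_H(u)$; then some $u$-sector contains at least three neighbors of $v$, and in particular one in its interior. The remaining work --- and the crux of the proof --- is to show that $H \cup \{u,v\}$ is one of configurations (4)--(8). The plan is to pin down the common neighborhood $N_H(u)\cap N_H(v)$ as a small union of ``stretches'' on $H$ --- a single edge $h_1h_2$; or $h_1h_2$ together with a second edge $h_ih_{i+1}$ or with a lone vertex $h_i$; or the path $h_1h_2h_3h_4$ --- and then, using Lemma~\ref{lemma:4} on either side of these stretches (with $u$ or $v$ as the sectored vertex as appropriate), to show that $N_H(u)$ lies on one of the two arcs of $H$ cut off by the stretches and $N_H(v)$ on the other; matching this against the definitions produces (4)--(8). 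Whenever a tentative stretch has the wrong shape --- a stretch endpoint adjacent to only one of $u,v$; a lone shared vertex flanked on both sides by neighbors of only one of them; three pairwise far-apart shared vertices; or a neighbor of $u$ or $v$ on the ``wrong'' arc --- one exhibits one of the excluded configurations: a theta (typically a spoke $u \dd h \dd v$ through a shared vertex $h$), a pyramid (apex a shared vertex, base a triangle $\{u,h,h'\}$ or $\{v,h,h'\}$), or a prism (between triangles $\{u,h_1,h_2\}$ and $\{v,h_i,h_{i+1}\}$). The main obstacle is precisely this case analysis: it branches on the number and sizes of the stretches and on whether the vertex with the smaller neighborhood --- the role of $v$ in (4) and (6) --- has a neighbor outside the stretches, with a different excluded configuration needed in each branch, all while keeping track of which arc each neighbor of $u$ and $v$ lies on.
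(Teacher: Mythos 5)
Your treatment of the first assertion, the reduction via Lemma~\ref{lemma:6} to outcome (i), the disposal of the cube case, and the case $N_H(u) = N_H(v)$ (leading to MNC configurations (1) and (2)) are all correct and consistent with the paper's argument. But the proof is not complete: for the remaining case, where some neighbor of one vertex lies outside the neighborhood of the other, you explicitly present only a ``plan'' and acknowledge that the branching case analysis is ``the main obstacle.'' That case analysis is the actual content of the lemma. In the paper it occupies the bulk of the proof: claim (2) establishes that if $\{u,v\}$ is complete to a non-adjacent pair $h_i, h_j$ with no further common neighbors on one arc between them, then $u$ and $v$ cannot both have neighbors on that arc (proved by exhibiting a theta, a pyramid, or a prism depending on where the neighbors sit); claim (3) uses this to force the common neighborhood $N = N_H(u)\cap N_H(v)$ to be a subpath of length at most one, extracting configurations (4)--(7) along the way; claim (4) further reduces to $N=\emptyset$ or configuration (8) via a delicate application of Lemma~\ref{lemma:4} to three overlapping sectors $H_a$, $H_b$, $H_u$ and a final theta; and the closing paragraph handles $N=\emptyset$ with another sector-counting argument ending in a theta. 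None of these steps is routine, and several require a specific, non-obvious choice of which forbidden induced subgraph to construct.

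Concretely, the gap is this: you have not verified that every ``wrong shape'' of the common neighborhood actually yields a theta, pyramid, or prism, nor that the surviving shapes match the definitions of configurations (4)--(8) (for instance, configuration (5) requires \emph{both} $u$ and $v$ to have extra neighbors confined to complementary arcs, and configuration (6) requires $v$ to have \emph{no} extra neighbors while $u$ has them on both sides --- distinguishing these two, and ruling out the hybrid where the extra neighbors land on the wrong arcs, is exactly where the paper needs claim (2) and the three-sector argument of claim (4)). A plan that says ``one exhibits one of the excluded configurations'' in each bad branch is a statement of intent, not a proof; as written, the argument for configurations (4)--(8) is missing.
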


\begin{proof}
The first statement follows from Lemma \ref{lemma:5}. It remains to prove the second statement. Assume that $u$ and $v$ cross and $uv \not \in E(G)$. Let $H = h_1 \dd \hdots \dd h_k \dd h_1$.

\medskip

\noindent \emph{(1) No three common neighbors of $u$ and $v$ in $H$ form an independent set.}

If $u$ and $v$ have three common neighbors $x, y, z \in V(H)$ such that $\{x, y, z\}$ is an independent set, then $\{u, v, x, y, z\}$ is a theta in $G$ between $u$ and $v$, a contradiction. This proves (1).

\medskip

\noindent \emph{(2) Suppose there exist $i, j \in \{1, \hdots, k\}$ with $j > i$ such that $h_ih_j \not \in E(G)$, $\{u, v\}$ is complete to $\{h_i, h_j\}$, and there are no common neighbors of $u$ and $v$ in $P = h_{j+1}\dd h_{j+2}\dd \hdots \dd h_{i-1}$. Then, $u$ and $v$ do not both have neighbors in $P$.}

If both $u$ and $v$ have neighbors in $P^*$, then $G$ contains a theta between $u$ and $v$, through $h_i$, $h_j$, and $P^*$. Therefore, we may assume that $h_{j+1}$ is adjacent to $u$. If $v$ has a neighbor in $P^*$, let $v'$ be the neighbor of $v$ in $P^*$ closest to $h_{j+2}$. Then, $G$ contains a pyramid from $v$ to $uh_jh_{j+1}$ through the paths $v \dd h_j$, $v \dd h_i \dd u$, and $v \dd v' \dd P \dd h_{j+1}$, a contradiction. Therefore, we may assume that $N_P(v) = \{h_{i-1}\}$. Then, $G$ contains a prism from $uh_jh_{j+1}$ to $h_ivh_{i-1}$ through the paths $u \dd h_i$, $h_j \dd v$, and $h_{j+1} \dd P \dd h_{i-1}$, a contradiction. This proves (2).

\medskip

Let $N = N_H(u) \cap N_H(v)$.

\medskip

\noindent \emph{(3) We may assume that $R = H[N]$ is a subpath of $H$ of length at most one.}

If $N = V(H)$, then by $(1)$, $H \cup \{u, v\}$ is MNC configuration (1) or (2). So, we may assume that $N \neq V(H)$. Suppose first that $R$ is a subpath of $H$ of length greater than one. By $(1)$, $R$ is of length at most three. If it is of length two, then by $(2)$, one of $u, v$ is a clone, a contradiction. Suppose $R$ is of length three, say $R=h_i \dd h_{i+1}\dd h_{i+2} \dd h_{i+3}$. Then, by $(2)$, at most one of $u,v$ has a neighbor in $H \setminus R$. If one of $u,v$ has a neighbor in $H \setminus R$, then $H \cup \{u, v\}$ is MNC configuration~(4), otherwise $G$ contains a theta between $h_i$ and $h_{i+3}$ through $u$, $v$, and $H \setminus R$.

Next, suppose that $R$ is not a subpath of $H$. By (1), it follows that $R$ is the disjoint union of two subpaths $Q_1, Q_2$ of $H$ of length at most one. Let $Q_1 = h_1 \dd \hdots \dd h_i$ and $Q_2 = h_s \dd \hdots \dd h_t$, where $k > t \geq s \geq i+2$, $i \leq 2$, and $t \leq s+1$. By (2), we may assume that $v$ has no neighbors in $h_{t+1} \dd h_{t+2} \dd \hdots \dd h_k$, and that not both $u$ and $v$ have neighbors in $h_{i+1} \dd \hdots \dd h_{s-1}$. Since $u$ and $v$ cross, at least one of $Q_1$ and $Q_2$ has length one, so we may assume $i = 2$. 

Note that at least one of $u$ and $v$ has a neighbor in $h_{t+1} \dd \dots \dd h_k$, otherwise $G$ contains a theta between $h_1$ and $h_t$ through $H \setminus \{h_2, h_s\}$, $u$, and $v$. Similarly, at least one of $u$ and $v$ has a neighbor in $h_{i+1} \dd \dots \dd h_{s-1}$. It follows that if $t=s+1$, then $G$ contains MNC configuration (5) or (6), and if $t = s$, then $G$ contains MNC configuration (7). This proves (3).

\medskip

\noindent \emph{(4) We may assume that $N = \emptyset$.}

We may assume that $H \cup \{u, v\}$ is not a cube (MNC configuration (3)) or MNC configuration~(8). By (3), we may assume that $R = h_1$ or $R = h_1h_2$. Let $i = 1$ if $R = h_1$, and $i = 2$ if $R = h_1h_2$. We claim that we may assume that $H \setminus R$ contains three distinct vertices $u', v', u''$ such that $h_i, u', v', u'', h_1$ appear in this order and $u', u'' \in N_H(u)$, $v' \in N_H(v)$. If $i=2$, then this follows from the assumption that $G$ does not contain MNC configuration~(8), and if $i=1$, it follows from Lemma \ref{lemma:6}. Let $u', v', u''$ be chosen such that the path from $u'$ to $u''$ in $H \setminus R$ is a $u$-sector. Let $H_a$ be the path from $h_i$ to $v'$ in $H \setminus \{u''\}$, let $H_b$ be the path from $h_1$ to $v'$ in $H \setminus \{u'\}$, and let $H_u$ be the path from $u'$ to $u''$ in $H \setminus R$. Since $H_a$ contains a $v$-sector that contains $u'$, it follows from Lemma \ref{lemma:4} that there are at least three neighbors of $u$ in $H_a$, and so there at least two neighbors of $u$ in $H_a^*$. Since $H_u$ is a $u$-sector, there is a neighbor of $u$ in $H_a^*$ between $h_i$ and $u'$. Similarly, there are at least three neighbors of $u$ in $H_b$, and so at least two neighbors of $u$ in $H_b^*$. Since $H_u$ is a $u$-sector, there is a neighbor of $u$ in $H_b^*$ between $h_1$ and $u''$. Finally, since $H_u$ is a $u$-sector that contains $v'$, there are at least three neighbors of $v$ in $H_u^*$. Let $v_1$ be the neighbor of $v$ in $H_u^*$ closest to $u'$, and let $v_2$ be the neighbor of $v$ in $H_u^*$ closest to $u''$. Then, $G$ contains a theta from $u$ to $v$, through $u \dd h_1 \dd v$, $u \dd u' \dd H_u \dd v_1 \dd v$, and $u \dd u'' \dd H_u \dd v_2 \dd v$. This proves (4).

\medskip

It follows from Lemma \ref{lemma:6} and (4) that $H$ contains four distinct vertices $u', v', u'', v''$ that appear in that order along $H$, such that $u', u'' \in N_H(u)$ and $v', v'' \in N_H(v)$. By (4), $u$ and $v$ have no common neighbors in $H$, and so every neighbor of $u$ in $H$ is in the interior of a $v$-sector and every neighbor of $v$ in $H$ is in the interior of a $u$-sector. Let $u', v', u'', v''$ be chosen so that $P = u' \hdots u''$ is a $u$-sector and $Q = v' \hdots v''$ is a $v$-sector. We may assume that $H \cup \{u,v\}$ is not a cube (i.e. MNC configuration (3)). Because $v'$ is in $P^*$, it follows from Lemma \ref{lemma:4} that there are at least three neighbors of $v$ in $P^*$. Since there are no neighbors of $v$ in $Q^*$, we may assume that $v_1, v_2$ are neighbors of $v$ in $P^*$ between $u'$ and $v'$ in that order. Similarly, there are at least three neighbors of $u$ in $Q^*$. Since there are no neighbors of $u$ in $P^*$, we may assume that $u_1, u_2$ are neighbors of $u$ in $Q^*$ between $u''$ and $v''$ in that order. Finally, because $v''$ is in the interior of a $u$-sector and $Q = v' \hdots v''$ is a $v$-sector, there is another neighbor of $v$ between $v''$ and $u'$. Then, $G$ contains a theta between $u$ and $v$ through the paths $u \dd u_2 \dd Q \dd v'' \dd v$, $u \dd u'' \dd Q \dd v' \dd v$, and $u \dd u' \dd P \dd v_1 \dd v$, a contradiction. 
\end{proof}

\begin{lemma}
Let $H$ be a hole of length greater than six in a graph $G \in \C$ and suppose $u$ and $v$ are non-adjacent vertices of $G \setminus H$ that cross. Then, $H$ contains a $\{u,v\}$-complete edge.
\label{lemma:8}
\end{lemma}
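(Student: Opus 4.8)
The plan is to split into two cases according to whether both $u$ and $v$ are major vertices for $H$, or at least one of them is a clone; note that since $u$ and $v$ cross, each of them is major or a clone for $H$. In the first case I would invoke Lemma~\ref{lemma:7}: since $uv\notin E(G)$ and $u,v$ cross, $H\cup\{u,v\}$ is an MNC configuration. Configurations~(1), (2), (3) have $|V(H)|$ equal to $4$, $5$, $6$ and so are excluded by $|V(H)|>6$, while in each of~(4)--(8) the pair $\{u,v\}$ is complete to $\{h_1,h_2\}$, so $h_1h_2$ is a $\{u,v\}$-complete edge of $H$. This is the only place where the hypothesis $|V(H)|>6$ is needed; everything that follows works for holes of any length.

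For the second case I would assume, without loss of generality, that $u$ is a clone of $y$, writing $N_H(u)=\{x,y,z\}$ with $xy,yz\in E(H)$ and $xz\notin E(H)$. First I would observe that $v\sim y$: otherwise $N_H(v)\subseteq V(H)\setminus\{y\}$, and then the arc $x\dd y\dd z$ contains $N_H(u)$ while the complementary arc $H\setminus\{y\}$ contains $N_H(v)$, so $u$ and $v$ would be nested. If $v$ is also a clone, say of $y'$, then $y\in N_H(v)$ forces $y=y'$ or $y$ a hole-neighbor of $y'$; the former gives $N_H(u)=N_H(v)$, whence $uv\in E(G)$ by Lemma~\ref{lemma:6}(iii) (its other outcomes being impossible for two clones with equal, non-independent, three-element neighborhoods), a contradiction; the latter puts $y'\in\{x,z\}$, so $v$ is a clone of $x$ or of $z$ and $\{u,v\}$ is complete to the edge $xy$ or $yz$. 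Hence I reduce to the case where $v$ is major for $H$, and it suffices to show $v\sim x$ or $v\sim z$.

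The core of the argument is this last sub-case. I would suppose for contradiction that $v\not\sim x$ and $v\not\sim z$ and pass to the twin hole $H'=H_{u\setminus y}$; there $y$ is a clone of $u$ with $N_{H'}(y)=\{x,u,z\}$, and, since $uv\notin E(G)$, $N_{H'}(v)=N_H(v)\setminus\{y\}$ is disjoint from $\{x,u,z\}$. If $|N_H(v)|=3$ then $N_H(v)$ is independent by Lemma~\ref{lemma:1}, so $v$ has exactly two non-adjacent neighbors in $H'$, and then $H'\cup\{v\}$ is a theta (a vertex with exactly two non-adjacent neighbors on a hole forms a theta with it, as in the proof of Lemma~\ref{lemma:2}), contradicting $G\in\C$. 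Otherwise $|N_{H'}(v)|\ge 3$, so $v$ is major or a clone for $H'$, and now $yv\in E(G)$ while $uv\notin E(G)$. If $y$ and $v$ are nested with respect to $H'$, Lemma~\ref{lemma:5} would produce a common neighbor of $y$ and $v$ in $H'$, impossible as $N_{H'}(y)$ and $N_{H'}(v)$ are disjoint. If $y$ and $v$ cross with respect to $H'$, Lemma~\ref{lemma:6} gives either $N_{H'}(y)=N_{H'}(v)$ — impossible, since $u\in N_{H'}(y)\setminus N_{H'}(v)$ — or its outcome~(i), which would interleave two neighbors of $v$ around $H'$ with two of the three \emph{consecutive} vertices $x,u,z$; since no vertex strictly between two of $x,u,z$ on $H'$ can belong to $N_{H'}(v)$ (it would have to be the third of $x,u,z$), no such interleaving exists. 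Every branch yields a contradiction, so $v\sim x$ or $v\sim z$, as desired.

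The hard part is exactly this reduction: when $u$ is a clone and $v$ is major, Lemma~\ref{lemma:7} is not available, and the device of moving to $H'=H_{u\setminus y}$ — in which $u$ becomes an ordinary hole vertex and $y$ a clone — is what lets the pairwise-interaction lemmas (Lemmas~\ref{lemma:3}, \ref{lemma:5}, and~\ref{lemma:6}) be brought to bear on $y$ and $v$; the assumption $v\not\sim x,z$ is precisely what forces $N_{H'}(y)=\{x,u,z\}$ and $N_{H'}(v)$ to be disjoint, contradicting the common neighbor those lemmas would otherwise yield. The point requiring most care is verifying that $N_{H'}(y)$ is exactly $\{x,u,z\}$ and that replacing $y$ by $u$ leaves all adjacencies among the remaining hole vertices unchanged, since the disjointness of $N_{H'}(y)$ and $N_{H'}(v)$ rests on these facts.
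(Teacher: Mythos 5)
Your proposal is correct and takes essentially the same route as the paper: the major--major case is handled by Lemma~\ref{lemma:7} with configurations (1)--(3) excluded by the length hypothesis, and the clone case rests on the same key device of passing to $H_{u\setminus y}$, where $N_{H_{u\setminus y}}(y)=\{x,u,z\}$ is disjoint from $N_{H_{u\setminus y}}(v)$. The only difference is cosmetic: where the paper directly exhibits a $vy$-turtle (if $v$ has at least three neighbors in $H_{u\setminus y}$) or a theta (if it has exactly two), you reach the same contradictions by invoking Lemmas~\ref{lemma:5} and~\ref{lemma:6}, whose proofs are those very constructions.
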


\begin{proof}
Since $u$ and $v$ cross, $u$ and $v$ are major or clones. If $u$ and $v$ are both major, it follows from Lemma \ref{lemma:7} that $H \cup \{u, v\}$ is MNC configuration (4), (5), (6), (7), or (8), so $H$ contains a $\{u, v\}$-complete edge. Now, suppose $u$ is a clone of $y$ in $H$ and $N_H(u) = \{x, y, z\}$. Because $u$ and $v$ cross, it follows that $vy \in E(G)$. We may assume that $xv, zv \notin E(G)$ since otherwise $H$ contains a $\{u,v\}$-complete edge. Note that $x \dd u \dd z$ is a subpath of $H_{u \setminus y}$ that contains all the neighbors of $y$ in $H_{u \setminus y}$ and no neighbors of $v$. If $v$ has at least three neighbors in $H_{u\setminus y}$, then $G$ contains a $vy$-turtle. So $v$ has two neighbors in $H_{u\setminus y}$, say $v_1$ and $v_2$, and hence three neighbors in $H$. By Lemma \ref{lemma:1} applied to $H$ and $v$, $v_1v_2 \notin E(G)$. But then $H_{u\setminus y}$ and $v$ form a theta, a contradiction.
\end{proof}

The following lemma describes the behavior of paths whose endpoints are nested with respect to $H$ and whose internal vertices are anticomplete to $H$.

\begin{lemma}
Let $H$ be a hole in a graph $G \in \C$ and let $P = u \dd \hdots \dd v$ be a path of length at least 1, vertex-disjoint from $H$, such that $u$ and $v$ have neighbors in $H$ and are nested with respect to $H$, and no internal vertex of $P$ has a neighbor in $H$. Then, $u$ and $v$ have a common neighbor in $H$, or $u$ and $v$ are both pendants of $H$ with adjacent neighbors in $H$. In particular, if $u$ and $v$ are strictly nested with respect to $H$, then $u$ and $v$ are both pendants of $H$ with adjacent neighbors in $H$.
\label{lemma:9}
\end{lemma}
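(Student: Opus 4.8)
I would argue by contradiction: suppose $u$ and $v$ have no common neighbour in $H$ and are not both pendants of $H$ with adjacent neighbours in $H$, and then produce a theta, pyramid, or prism inside $\{u,v\}\cup V(P)\cup V(H)$. Since $u$ and $v$ are nested, fix $a,b\in V(H)$ and the two $ab$-arcs $H_1,H_2$ of $H$ with $N_H(u)\subseteq H_1$, $N_H(v)\subseteq H_2$; by assumption $N_H(u)\cap N_H(v)=\emptyset$. The base case is when both $u$ and $v$ are pendants, with neighbours $u'$ and $v'$: then $u'\neq v'$ and $u'\not\sim v'$, and the two $u'v'$-arcs of $H$ together with the path $u'\dd u\dd P\dd v\dd v'$ form a theta between $u'$ and $v'$ (using that $P$ is induced, that $P^*$ is anticomplete to $H$, and that $u$, $v$ have no neighbour in $H$ other than $u'$, $v'$ respectively). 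So, exchanging $u$ and $v$ if necessary, we may assume $u$ has at least two neighbours in $H$; let $u',u''$ be the neighbours of $u$ in $H_1$ closest to $a$ and to $b$, so $u'\neq u''$.

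If $u'\sim u''$ then $N_H(u)=\{u',u''\}$, the set $\{u,u',u''\}$ is a triangle, and $H\setminus\{u',u''\}$ is a path $q_1\dd\cdots\dd q_m$ (with $q_1\sim u'$, $q_m\sim u''$, and $m\ge 2$) containing $N_H(v)$. I would then distinguish three subcases: if $v$ has two non-adjacent neighbours $q_i,q_j$ with $i<j$, route a pyramid from $v$ to the triangle $\{u,u',u''\}$ along $P$, along $v\dd q_i\dd\cdots\dd q_1\dd u'$, and along $v\dd q_j\dd\cdots\dd q_m\dd u''$; if $N_H(v)=\{q_i,q_{i+1}\}$, obtain a prism between $\{u,u',u''\}$ and the triangle $\{v,q_i,q_{i+1}\}$ with connecting paths $P$, $u'\dd q_1\dd\cdots\dd q_i$, and $u''\dd q_m\dd\cdots\dd q_{i+1}$; and if $N_H(v)=\{q_i\}$ is a single vertex, route a pyramid from $q_i$ to $\{u,u',u''\}$ along $q_i\dd v\dd P\dd u$ and along the two subpaths of $H\setminus\{u',u''\}$ from $q_i$ to $u'$ and to $u''$ (one of these degenerating to a single edge when $q_i\in\{q_1,q_m\}$).

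If instead $u'\not\sim u''$, let $Q$ be the $u$-sector of $H$ with ends $u'$ and $u''$ (the one meeting $H_2$); then $N_H(v)\subseteq Q^*$ and $H^{*}:=\{u\}\cup Q$ is a hole with $N_{H^{*}}(u)=\{u',u''\}$. If $v$ has a unique neighbour $w$ (necessarily in $Q^*$, so $u\not\sim w$), then $u\dd u'\dd Q\dd w$, $u\dd u''\dd Q\dd w$, and $u\dd P\dd v\dd w$ form a theta between $u$ and $w$. Otherwise let $w',w''$ be the neighbours of $v$ on $Q$ closest to $u'$ and to $u''$. If $w'\sim w''$ (so $\{v,w',w''\}$ is a triangle), get a pyramid from $u$ to $\{v,w',w''\}$ via $P$, $u\dd u'\dd Q\dd w'$, and $u\dd u''\dd Q\dd w''$. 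If $w'\not\sim w''$ and $P$ has length at least $2$ (so $u\not\sim v$), then $P$, $u\dd u'\dd Q\dd w'\dd v$, and $u\dd u''\dd Q\dd w''\dd v$ form a theta between $u$ and $v$. Finally, if $w'\not\sim w''$ and $P$ has length $1$, then $uv\in E(G)$, and since neither $u$ nor $v$ is a cap, both have at least three neighbours in $H$ and are major or clones for $H$ (by Lemmas~\ref{lemma:1} and~\ref{lemma:2}), so Lemma~\ref{lemma:5} produces a common neighbour of $u$ and $v$, a contradiction. The \emph{in particular} statement follows at once, since strict nestedness excludes the common-neighbour outcome.

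The real content of the proof is the routine but lengthy verification, in each subcase, that the subgraph constructed is genuinely a theta, pyramid, or prism: one must check that the three paths are pairwise internally disjoint, have the required lengths (and that at most one path of a pyramid has length one), and — the delicate point — that no chords occur. The extremal choice of $u',u''$ (and of $w',w''$), the fact that $Q$ is a $u$-sector so that $Q$ and $H^{*}$ are induced, the disjointness of $P$ from $H$, and the anticompleteness of $P^*$ to $H$ are precisely what rule out unwanted chords; I expect the fussiest parts to be the degenerate subcases of the $u'\sim u''$ case in which $v$'s unique neighbour is adjacent on $H$ to $u'$ or $u''$, and keeping straight whether the pyramid or the prism outcome arises according as $v$ is or is not a cap.
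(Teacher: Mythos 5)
Your proposal is correct and follows essentially the same route as the paper's proof: an exhaustive case analysis on the types of $u$ and $v$ (pendant, cap, or two non-adjacent neighbours in $H$), with the same theta/pyramid/prism constructions in each case; the only cosmetic differences are that you organize the cases by whether $u$ is a cap rather than as a flat list of type pairs, and in the adjacent case with both vertices having non-adjacent neighbours you invoke Lemma~\ref{lemma:5} (whose proof is exactly the turtle) where the paper exhibits the $uv$-turtle directly. The one small point to tighten is that in your first subcase of the $u'\sim u''$ analysis you should take $q_i$ and $q_j$ to be the neighbours of $v$ closest to $q_1$ and to $q_m$ respectively (these are automatically non-adjacent once $v$ has some pair of non-adjacent neighbours), as you already do with $w',w''$ in the other branch.
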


\begin{proof}
We may assume that $u$ and $v$ do not have a common neighbor in $H$. If $u$ (resp. $v$) has at least two neighbors in $H$, then let $u'$ and $u''$ (resp. $v'$ and $v''$) be the endpoints of the $u$-sector (resp. $v$-sector) that contains all neighbors of $v$ (resp. $u$) in $H$, and otherwise let $u' = u''$ (resp. $v' = v''$) be its unique neighbor in $H$. Without loss of generality, $u', v', v'', u''$ appear in this order along $H$. If $u$ and $v$ both have two non-adjacent neighbors in $H$ (and so by Lemma~\ref{lemma:1} and Lemma~\ref{lemma:2}, $u$ and $v$ each has at least three neighbors in $H$) and $uv \in E(G)$, then $H \cup \{u, v\}$ is a $uv$-turtle, a contradiction. If $u$ and $v$ both have two non-adjacent neighbors in $H$ and $uv \not \in E(G)$, then $G$ contains a theta between $u$ and $v$ through the paths $u \dd u' \dd H \setminus \{u'', v''\} \dd v' \dd v$, $u \dd u'' \dd H \setminus \{u', v'\} \dd v'' \dd v$, and $u \dd P \dd v$, a contradiction. If $u$ has two non-adjacent neighbors in $H$ and $v$ is a cap, then $G$ contains a pyramid from $u$ to $vv'v''$ through $u \dd P \dd v$, $u \dd u' \dd H \setminus \{u''\} \dd v'$, and $u \dd u'' \dd H \setminus \{u'\} \dd v''$, a contradiction. If $u$ has two non-adjacent neighbors in $H$ and $v$ is a pendant, then $G$ contains a theta between $u$ and $v'$ through $u \dd u' \dd H \setminus \{u''\} \dd v$, $u \dd P \dd v \dd v'$, and $u \dd u'' \dd H \setminus \{u'\} \dd v'$, a contradiction. Thus, neither $u$ nor $v$ has two non-adjacent neighbors in $H$. 

If $u$ and $v$ are both caps, then $H \cup P$ is a prism between $uu'u''$ and $vv'v''$, a contradiction. If $u$ is a cap and $v$ is a pendant, then $H \cup P$ is a pyramid from $v'$ to $uu'u''$. If $u$ and $v$ are both pendants, then $H \cup P$ is a theta between $u'$ and $v'$, unless $u'v'$ is an edge.
\end{proof}

Let $G \in \C$, let $H$ be a hole in $G$, and let $w$ be a major vertex for $H$. A path $N \subseteq H$ is an \emph{extended neighborhood} of $w$ in $H$ if there exists a $w$-sector $Q = x \hdots y$ such that $N = Q \cup (\{x', y'\} \cap N_H(w))$, where $x'$ and $y'$ are the neighbors of $x$ and $y$ in $H \setminus Q$, respectively. Two vertices $a, b \in H$ are \emph{distant in $H$ with respect to $w$} if $a, b$ are not contained in an extended neighborhood of $w$ in $H$. Note that if a vertex $v \in H$ is not adjacent to $w$, then $v$ is in exactly one extended neighborhood of $w$ in $H$.

Suppose $H$ is a hole in a graph $G \in \C$ and $u$ is a major vertex for $H$. The vertex $u$ is called a \emph{hub} if $N_H(u) = \{x, y, z, w\}$ where the vertices $x, y, z, w$ appear in that order in $H$, $xy, zw \in E(G)$, and $xw, zy \not \in E(G)$.

\begin{lemma}
Let $G \in \C$, let $H$ be a hole in $G$ of length greater than six, and let $w$ be a major vertex for $H$. Let $p \in V(G) \setminus V(H)$ be such that $pw \not \in E(G)$. Then, either $N_H(p)$ is contained in an extended neighborhood of $w$ in $H$ or $H \cup \{p, w\}$ is MNC configuration (6).
\label{lem:extended_nhbrhd}
\end{lemma}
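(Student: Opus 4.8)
The plan is to proceed by a case analysis on the nature of the vertex $p$ relative to $H$, using the earlier structural lemmas about minor, major, and clone vertices. First I would dispose of the easy cases: if $p$ has no neighbor in $H$, the conclusion is vacuous; if $p$ is minor for $H$, then by Lemma~\ref{lemma:2} it is a pendant, a cap, or a clone, and in each case $N_H(p)$ lies in a three-vertex subpath of $H$. I would then argue that any such three-vertex subpath is contained in an extended neighborhood of $w$ in $H$: since $w$ is major and $H$ has length greater than six, Lemma~\ref{lemma:1} gives that the $w$-sectors are long enough that a three-vertex subpath of $H$ either sits inside a single $w$-sector (hence inside its extended neighborhood), or straddles a neighbor $x$ of $w$; in the latter case the three-vertex path is contained in the union of the two extended neighborhoods sharing $x$, but since $pw\notin E(G)$ we can locate the neighbors of $p$ more precisely — the only way $N_H(p)$ escapes a single extended neighborhood is if $p$ is a clone of a neighbor of $w$, which forces a common neighbor of $p$ and $w$ in $H$ by Lemma~\ref{lemma:3}, and a short argument then still places $N_H(p)$ in one extended neighborhood.

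The main work is the case where $p$ is major for $H$. Here I would split according to whether $p$ and $w$ are nested or cross with respect to $H$. If $p$ and $w$ are nested, then since $pw\notin E(G)$ (and also $pw\in E(G)$ is excluded by hypothesis), Lemma~\ref{lemma:4} applied to each $w$-sector $P$ tells us that outcome (iii) — ``$P$ contains at least three neighbors of $v$'' — is the only one available for the sector containing most of $p$'s neighbors, once we rule out the cube (outcome (iv)) and the degenerate outcomes (i), (ii) using nestedness and the length of $H$. But $p$ and $w$ being nested means all of $p$'s neighbors lie in one $w$-sector together with possibly its two endpoints' $H$-neighbors, which is exactly an extended neighborhood of $w$; so the nested case gives the first conclusion directly, essentially by the definition of ``nested'' combined with the definition of ``extended neighborhood.'' If instead $p$ and $w$ cross, then since $H$ has length greater than six and $pw\notin E(G)$, Lemma~\ref{lemma:8} gives a $\{p,w\}$-complete edge in $H$, and Lemma~\ref{lemma:7} tells us $H\cup\{p,w\}$ is one of the MNC configurations (4)–(8) (configurations (1),(2),(3) are excluded since $k>6$). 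I would then check that in configurations (4), (5), (7), and (8), the constraint that $p$ and $w$ cross while one of them has its neighborhood confined to a path $H_1$ or $H_2$ forces, after relabeling so $w$ plays the role with the smaller neighborhood, that $N_H(p)$ still lies in an extended neighborhood of $w$; whereas configuration (6) is precisely the stated exceptional outcome. The delicate point is getting the roles of $p$ and $w$ matched up correctly with the asymmetric roles of $u$ and $v$ in the MNC definitions — in configuration (6), $v$ has no neighbors outside $\{h_1,h_2,h_i,h_{i+1}\}$ while $u$ has neighbors in both $H_1^*$ and $H_2^*$, so $u$'s neighborhood is genuinely not inside any single extended neighborhood of the degree-four vertex $v$; one must verify that this cannot be repaired by viewing things from the other side, i.e.\ that $w$ must be the vertex named $v$ here, which follows because $w$ is major and has the larger spread only when it is $u$, but then $p=v$ has its neighborhood $\{h_1,h_2,h_i,h_{i+1}\}$, and we must check this is \emph{not} contained in an extended neighborhood of $w$ — which is exactly why (6) appears as a genuine exception rather than being absorbed.

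The main obstacle I anticipate is the bookkeeping in the crossing case: correctly identifying, for each of MNC configurations (4)–(8), which of the two major vertices must be $w$ (the one we are told about, which may have large neighborhood) and which must be $p$, and then verifying the containment $N_H(p)\subseteq$ (extended neighborhood of $w$) holds in every configuration except (6). This requires carefully unwinding the definitions: an extended neighborhood of $w$ is a $w$-sector $Q=x\cdots y$ together with whichever of the two ``outside'' neighbors $x',y'$ of $x,y$ happen to be adjacent to $w$. In configurations (5), (7), (8) the set $H_2\cup\{h_1,h_{i+1}\}$ (or the appropriate variant) that contains $N_H(v)$ is, after checking which endpoints are adjacent to $u$, exactly such an extended neighborhood of $u$; so with $w=u$ and $p=v$ we are done, and the symmetric assignment $w=v$, $p=u$ also works in (5) and (7) where both sides are confined, but in (8) only one assignment works and one must confirm it is consistent with $w$ being the given major vertex (it is, since the statement is about \emph{some} major vertex $w$ and we get to use whichever side of the configuration $w$ sits on). I expect that once the definitions of ``extended neighborhood'' and ``distant with respect to $w$'' are carefully juxtaposed with the MNC configuration list, each sub-case is a one-line verification, but assembling them cleanly — and making sure the length hypothesis $k>6$ is invoked wherever configurations (1),(2),(3) or short-hole coincidences would otherwise intrude — is where the care is needed.
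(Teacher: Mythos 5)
Your overall architecture matches the paper's proof: dispose of the nested case by the definitions, and in the crossing case use Lemma~\ref{lemma:8} and Lemma~\ref{lemma:7} to reduce to the MNC configurations, of which only (6) survives as an exception. The MNC bookkeeping you worry about is exactly the (routine) verification the paper leaves implicit, and your analysis of why configuration (6) cannot be absorbed is correct. (One small inaccuracy: in configuration (8) \emph{both} assignments of $\{w,p\}$ to $\{u,v\}$ work, by the symmetry of that configuration; but this does not affect anything.)

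There is, however, a genuine gap in your treatment of the case where $p$ is a clone. Your blanket claim that a three-vertex subpath of $H$ is always contained in an extended neighborhood of $w$ is false: if $N_H(p)=\{x,y,z\}$ with $wy\in E(G)$ but $wx,wz\notin E(G)$, then $x$ and $z$ lie in the interiors of two \emph{different} $w$-sectors meeting at $y$, and no single extended neighborhood of $w$ contains both. You recognize this is the dangerous subcase, but the tool you reach for, Lemma~\ref{lemma:3}, is inapplicable: its hypothesis is $uv\in E(G)$ (the clone is \emph{adjacent} to the major vertex), whereas here $pw\notin E(G)$ by assumption. Moreover, even the conclusion of Lemma~\ref{lemma:3} --- a single common neighbor of $p$ and $w$ in $H$ --- would not suffice, since that common neighbor could be the middle vertex $y$, which is precisely the bad case. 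What is needed is a $\{p,w\}$-complete \emph{edge} of $H$, which forces $w$ to be adjacent to $x$ or $z$ and thereby places $\{x,y,z\}$ inside one extended neighborhood. This is supplied by Lemma~\ref{lemma:8}, whose hypotheses (crossing, non-adjacent, hole of length greater than six) cover clones as well as major vertices; this is exactly how the paper handles the clone case. So the fix is available from a lemma you already invoke for the major case --- you just need to route the crossing-clone subcase through Lemma~\ref{lemma:8} rather than Lemma~\ref{lemma:3}.
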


\begin{proof}
If $p$ and $w$ are nested, then $N_H(p)$ is contained in an extended neighborhood of $w$ in $H$, so we may assume that $p$ and $w$ cross. It follows that $p$ is either a clone or a major vertex for $H$. If $p$ is a clone, then it follows from Lemma \ref{lemma:8} that $N_H(p)$ is contained in an extended neighborhood of $w$. Now, suppose $p$ is major. By Lemma \ref{lemma:7}, it follows that $H \cup \{w, p\}$ is MNC configuration (4), (5), (6), (7), or (8), and therefore either $N_H(p)$ is contained in an extended neighborhood of $w$ in $H$, or $H \cup \{w, p\}$ is MNC configuration (6).
\end{proof}

Let $G \in \C$, let $H$ be a hole in $G$, and let $w$ be a major vertex for $H$. We say that a path $P = p_1 \dd \hdots \dd p_k$ is \emph{$(H, w)$-significant} if there exist $a, b \in V(H)$ such that $a \in N_H(p_1) \setminus N_H(w)$, $b \in N_H(p_k)$, and $a$ and $b$ are distant in $H$ with respect to $w$. 

\begin{lemma}
Let $G \in \C$, let $H$ be a hole in $G$ of length greater than six, and let $w$ be a major vertex for $H$ such that either $w$ is not a hub or every major vertex for $H$ is a hub. Let $P = p_1 \dd \hdots \dd p_k$ be $(H, w)$-significant with $a, b \in V(H)$ as in the definition of a significant path. Let $Q = x \hdots y$ be the $w$-sector containing $a$. Suppose $w$ is anticomplete to $\{p_1, p_k\}$ and $p_1, p_k \not \in H$. Then, $p_k$ is anticomplete to $Q^*$.
\label{lemma:pk_anticomplete_Q*}
\end{lemma}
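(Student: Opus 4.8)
The plan is to argue by contradiction: suppose $p_k$ has a neighbor $c\in Q^*$. The path $P$ will play essentially no role beyond supplying the endpoints $p_1,p_k$ and, through $a$, the sector $Q$; all the work goes through Lemma~\ref{lem:extended_nhbrhd} and the hub hypothesis on $w$. First I would pin down that $a$ and $c$ lie in the same extended neighborhood of $w$: since $a\in N_H(p_1)\setminus N_H(w)$ while the ends $x,y$ of $Q$ belong to $N_H(w)$, the vertex $a$ is an interior vertex of $Q$, i.e.\ $a\in Q^*$. Let $N_Q$ be the extended neighborhood of $w$ built from the $w$-sector $Q$; then $Q\subseteq N_Q$, so $a,c\in Q^*\subseteq N_Q$, and since $c$ is a non-neighbor of $w$ it lies in exactly one extended neighborhood of $w$, namely $N_Q$.

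Next I would show that $N_H(p_k)$ is contained in no extended neighborhood of $w$. If it were contained in one, say $N'$, then $c\in N'$, so $N'=N_Q$ by the uniqueness just noted, whence $b\in N_H(p_k)\subseteq N_Q$; together with $a\in N_Q$ this puts $a$ and $b$ in a common extended neighborhood of $w$, contradicting the part of the definition of an $(H,w)$-significant path that says $a$ and $b$ are distant in $H$ with respect to $w$. Since $p_k\notin H$ and $p_kw\notin E(G)$ (as $w$ is anticomplete to $\{p_1,p_k\}$), Lemma~\ref{lem:extended_nhbrhd} then forces $H\cup\{p_k,w\}$ to be MNC configuration~(6).

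Finally I would extract the contradiction. Writing $H=h_1\dd h_2\dd\dots\dd h_m\dd h_1$ as in configuration~(6), with $3<i<m-1$, the set $\{p_k,w\}$ is complete to $S:=\{h_1,h_2,h_i,h_{i+1}\}$, one of $p_k,w$ has $H$-neighborhood exactly $S$, and the other has a neighbor in each of $H_1^*$ and $H_2^*$, hence at least six neighbors in $H$. As $h_1h_2,h_ih_{i+1}\in E(H)$ and $h_1h_{i+1},h_2h_i\notin E(G)$ ($H$ is a hole and $3<i<m-1$), the vertex with $H$-neighborhood $S$ is a hub and the other is not. Now $c\in Q^*$ is anticomplete to $w$, so $c\notin N_H(w)\supseteq S$, hence $c\notin S$; since $S\cup\{c\}\subseteq N_H(p_k)$ we get $N_H(p_k)\neq S$, so $p_k$ is not the vertex with $H$-neighborhood $S$, and therefore $w$ is --- in particular $w$ is a hub. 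By the hypothesis on $w$ it follows that every major vertex for $H$ is a hub; but $p_k$ is a major vertex for $H$ with more than four neighbors in $H$, so $p_k$ is not a hub, a contradiction. Hence $p_k$ is anticomplete to $Q^*$.

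The delicate point is the middle step: to conclude that any extended neighborhood containing $N_H(p_k)$ must be $N_Q$, one must invoke that a non-neighbor of $w$ lies in a unique extended neighborhood, and only then does the distance of $a$ and $b$ bite (triggering Lemma~\ref{lem:extended_nhbrhd}). Everything else is bookkeeping: Lemma~\ref{lem:extended_nhbrhd} supplies the structure, and the hub dichotomy turns configuration~(6)---which always has exactly one hub among its two major vertices---into the final contradiction once $p_k$ is pinned down as the non-hub vertex. (In fact the hypotheses $p_1\notin H$, "$w$ anticomplete to $p_1$", and the path structure of $P$ are not needed for this particular statement; only $a$, the sector $Q$, the distance of $a$ and $b$, and the position of $p_k$ are used.)
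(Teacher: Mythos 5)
Your proof is correct and follows essentially the same route as the paper's: both arguments rest on Lemma~\ref{lem:extended_nhbrhd} together with the observation that a non-neighbor of $w$ lies in a unique extended neighborhood, and both use the fact that $a$ and $b$ are distant to rule out $N_H(p_k)$ being contained in an extended neighborhood of $w$. The one point where you are more explicit than the paper is in disposing of MNC configuration (6): the paper dismisses it with the one-line remark that $p_k$ has a neighbor in the interior of a $w$-sector (which only excludes the case where $p_k$ plays the role of the degree-four vertex $v$), whereas you also treat the case where $w$ plays that role, correctly invoking the hypothesis that if $w$ is a hub then every major vertex for $H$ is a hub --- a detail the paper leaves implicit.
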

\begin{proof}
Since $p_1$ has a neighbor in the interior of a $w$-sector, $H \cup \{w, p_1\}$ is not MNC configuration~(6). By Lemma \ref{lem:extended_nhbrhd}, it follows that $N_H(p_1)$ is contained in an extended neighborhood of $w$. Note that, by definition, a $w$-sector of length greater than one is contained in exactly one extended neighborhood of $w$. Let $\overline{Q}$ be the extended neighborhood of $w$ containing $Q$, and suppose for sake of contradiction that $p_k$ has a neighbor in $Q^*$. Since $p_k$ has a neighbor in the interior of a $w$-sector, $H \cup \{w, p_k\}$ is not MNC configuration (6), so by Lemma \ref{lem:extended_nhbrhd}, $N_H(p_k)$ is contained in an extended neighborhood of $w$. Because $p_1$ has a neighbor $a$ in $Q^*$, it follows that $N_H(p_1) \subseteq \overline{Q}$. Similarly, because $p_k$ has a neighbor in $Q^*$, it follows that $N_H(p_k) \subseteq \overline{Q}$. Then, $a$ and $b$ are contained in an extended neighborhood of $w$, a contradiction.
\end{proof}

\begin{lemma}
Let $G \in \C$, let $H$ be a hole in $G$ of length greater than six, and let $w$ be a major vertex for $H$ such that either $w$ is not a hub or every major vertex for $H$ is a hub. Let $P = p_1 \dd \hdots \dd p_k$ be $(H, w)$-significant with $a, b \in V(H)$ as in the definition of a significant path. Assume that $P^*$ is anticomplete to $H$, $w$ is anticomplete to $\{p_1, p_k\}$, and $p_1, p_k \not \in H$. Then, $p_1$ and $p_k$ have a common neighbor in $H$. 
\label{lemma:common_nbr}
\end{lemma}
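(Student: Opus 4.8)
The proof will be by contradiction: suppose $p_1$ and $p_k$ have no common neighbour in $H$. If $k=1$ then $p_1=p_k$ and $a$ is already the required common neighbour, so assume $P$ has length at least one.

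First suppose $p_1$ and $p_k$ are nested with respect to $H$. Then $P$ is a path of length at least one, vertex-disjoint from $H$ (its interior is anticomplete to $H$, hence disjoint from $H$), whose endpoints have neighbours in $H$ and are nested, and whose interior has no neighbour in $H$; so Lemma \ref{lemma:9} applies, and since $p_1,p_k$ have no common neighbour it forces both of them to be pendants of $H$ with adjacent neighbours. Thus $N_H(p_1)=\{a\}$, $N_H(p_k)=\{b\}$ and $ab\in E(G)$. But $a\notin N_H(w)$, so $a$ lies in the interior of a unique $w$-sector $Q$; since $ab\in E(G)$, the vertex $b$ is a neighbour of $a$ on $H$ and hence lies on $Q$, so $a$ and $b$ both belong to the extended neighbourhood $\overline{Q}$ of $w$ --- contradicting that $a$ and $b$ are distant in $H$ with respect to $w$. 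So from now on $p_1$ and $p_k$ cross.

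Let $Q=x\dd\hdots\dd y$ be the $w$-sector whose interior contains $a$, and let $R$ be the $x$--$y$ path of $H$ with $V(R)\cap V(Q)=\{x,y\}$. By Lemma \ref{lemma:pk_anticomplete_Q*}, $p_k$ is anticomplete to $Q^*$, so $N_H(p_k)\subseteq V(R)$; and since $a$ and $b$ are distant we have $b\notin\overline{Q}\supseteq\{x,y\}$, so $b\in R^*$. If $p_1$ had no neighbour in $R^*$ then $N_H(p_1)\subseteq V(Q)$, and together with $N_H(p_k)\subseteq V(R)$ this would make $p_1$ and $p_k$ nested; so $p_1$ has a neighbour in $R^*$, and therefore has neighbours both in $Q^*$ (namely $a$) and in $R^*$ --- in particular $p_1$ is a major vertex for $H$. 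Now apply Lemma \ref{lem:extended_nhbrhd} to $p_1$ and $w$. In the MNC~configuration~(6) outcome one of $p_1,w$ is a hub and the other is major but not a hub; by the hypothesis that $w$ is not a hub or every major vertex for $H$ is a hub, $w$ cannot be the hub, so $p_1$ is the hub, and then (as both vertices of an MNC~configuration~(6) are complete to the hub's four neighbours) $N_H(p_1)\subseteq N_H(w)$, contradicting $a\in N_H(p_1)\setminus N_H(w)$. Hence $N_H(p_1)$ is contained in an extended neighbourhood of $w$, which must be $\overline{Q}$ because $a\in Q^*\cap N_H(p_1)$. In particular the neighbour of $p_1$ in $R^*$ lies in $\overline{Q}\cap R^*$, which is contained in $\{x',y'\}$, where $x'$ and $y'$ are the neighbours on $R$ of $x$ and $y$ respectively; so, after possibly swapping $x$ and $y$, $p_1$ is adjacent to $x'$ and $x'\in N_H(w)$.

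It remains to contradict this configuration, and this last step is where the real work lies. Since $p_1$ and $p_k$ cross while $N_H(p_1)\subseteq\overline{Q}$, which is an arc of $H$ with interior contained in $\{x\}\cup Q^*\cup\{y\}$, while $N_H(p_k)$ meets $V(R)$ only and $p_k$ is anticomplete to $Q^*$, one deduces that $N_H(p_k)$ contains $x$ or $y$; say $x$, whence $p_1\not\sim x$. Thus $x$ is a non-neighbour of $p_1$ with the two neighbours $x'$ and $a$ of $p_1$ on either side of it. From here I would build a theta between $p_1$ and $p_k$ out of three internally disjoint paths: $P$ itself; the path $p_1\dd x'\dd x\dd p_k$; and a third path leaving $p_1$ through its neighbour closest to the $y$-end of $\overline{Q}$ and following $H$ to a neighbour of $p_k$ on $R$, in each case choosing the extreme neighbour of $p_1$ and of $p_k$ so that no chord arises --- using that $p_k$ is anticomplete to $Q^*$ and that $p_1$ and $p_k$ have no common neighbour. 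When $P$ has length one, or when $p_k$ is a pendant of $H$, the same routing produces a pyramid or a prism instead. This contradicts $G\in\C$. The main obstacle is precisely the care needed in this final construction: ruling out chords in the degenerate situations (short $w$-sectors, $p_1$ adjacent to $x'$, $y'$ or $y$, $p_k$ with all its neighbours clustered near $x$) requires choosing the routing vertices extremally and, in some cases, rerouting through $Q^*$, and is the part of the argument that is genuinely delicate.
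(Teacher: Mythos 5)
Your treatment of the nested case is correct and matches the paper's, and your setup for the crossing case is sound: you correctly reach the configuration where $p_k$ is anticomplete to $Q^*$ (Lemma~\ref{lemma:pk_anticomplete_Q*}), $N_H(p_1)\subseteq\overline{Q}$ (Lemma~\ref{lem:extended_nhbrhd}, with a clean dispatch of the MNC~configuration~(6) alternative via the hub hypothesis), and $p_1$ is adjacent to one of $x',y'$, which is then a neighbour of $w$. This is essentially where the paper also stands midway through its argument.

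The gap is the final step, which you only sketch and explicitly flag as unresolved. That step is where the real content of the lemma lies, and your proposed route does not obviously close. A theta between $p_1$ and $p_k$ built from $P$, the path $p_1\dd x'\dd x\dd p_k$, and a third route through $Q^*$ and $R^*$ fails exactly in the situations you list: for instance, if the neighbour of $p_1$ in $Q^*$ closest to $y$ is the neighbour of $x$ on $Q$, the second and third paths are joined by an edge through $x$, and the resulting structure ($p_1,x',x$ and that neighbour form a $C_4$, not a triangle) is neither a theta nor a pyramid nor a prism. Your fallback for $k=2$ is also unsubstantiated: the edge $p_1p_k$ together with two internally disjoint paths between $p_1$ and $p_k$ is just a cycle with one chord, which is not a forbidden induced subgraph for $\C$. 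The paper avoids all of this by never building a structure through $P$ at this stage. Instead it first shows $w$ and $p_1$ cross (if they were nested, Lemma~\ref{lemma:pk_anticomplete_Q*} would make $p_1$ and $p_k$ nested), applies Lemma~\ref{lemma:8} to obtain a $\{w,p_1\}$-complete edge, which may be taken to be $xx'$; then the crossing of $p_1$ and $p_k$ together with the absence of a common neighbour forces $p_1y',p_ky\in E(G)$ and $p_1y,p_ky'\notin E(G)$, whence $wy'\in E(G)$, and the contradiction is a pyramid from $p_1$ to the triangle $wyy'$ living entirely inside $\{p_1,w\}\cup V(H)$. The ingredient your plan is missing is precisely this use of $w$ (via the turtle-based Lemma~\ref{lemma:8}) to pin $p_1$ down to being complete to $\{x,x',y'\}$, which is what makes a chord-free forbidden structure available.
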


\begin{proof}
Assume for a contradiction that $p_1$ and $p_k$ do not have a common neighbor in $H$. In particular, $p_1 \neq p_k$. Suppose that $p_1$ and $p_k$ are nested. Then, they are strictly nested. By Lemma \ref{lemma:9}, $p_1$ and $p_k$ are pendants of $H$ with adjacent neighbors in $H$. It follows that $a$ and $b$ are adjacent, contradicting that $a$ and $b$ are distant in $H$ with respect to $w$. Therefore, $p_1$ and $p_k$ cross, and hence they are clone or major for $H$. Since $p_1$ and $p_k$ cross and they do not have a common neighbor in $H$, it follows that $p_1$ and $p_k$ are both major. If $w$ and $p_1$ are nested, then by Lemma \ref{lemma:pk_anticomplete_Q*}, $p_1$ and $p_k$ are nested. Hence, $w$ and $p_1$ cross. By Lemma \ref{lemma:8}, $H$ contains a $\{w, p_1\}$-complete edge. Let $Q = x \hdots y$ be the $w$-sector containing $a$. Let $x'$ and $y'$ be the neighbors of $x$ and $y$ in $H \setminus Q$, respectively. Since $p_1$ has a neighbor in the interior of a $w$-sector, $H \cup \{w, p_1\}$ is not MNC configuration (6). By Lemma \ref{lem:extended_nhbrhd}, it follows that $N_H(p_1) \subseteq Q \cup (\{x', y'\} \cap N_H(w))$, and by Lemma \ref{lemma:pk_anticomplete_Q*}, it follows that $N_H(p_k) \subseteq H \setminus Q^*$. Up to symmetry, suppose $\{w, p_1\}$ is complete to $\{x, x'\}$. Since $p_1$ and $p_k$ cross and $p_1$ and $p_k$ have no common neighbor in $H$, $p_1y', p_ky \in E(G)$ and $p_1y, p_ky' \not \in E(G)$. Also, $p_k$ has another neighbor in $H \setminus (Q \cup \{x', y'\})$ and in particular, $x'y' \not \in E(G)$. Because $p_1y' \in E(G)$, it follows that $wy' \in E(G)$. Let $p'$ be the neighbor of $p_1$ in $Q^*$ closest to $y$. Then, $G$ contains a pyramid from $p_1$ to $wyy'$ through $p_1 \dd y'$, $p_1 \dd x' \dd w$, and $p_1 \dd p' \dd Q \dd y$, a contradiction.
\end{proof}

We can now prove the main result of this section.

\begin{theorem}
Let $G \in \C$, let $H$ be a hole in $G$ of length greater than six, and let $w$ be a major vertex for $H$ such that either $w$ is not a hub or every major vertex for $H$ is a hub. Let $P = p_1 \dd \hdots \dd p_k$ be $(H, w)$-significant. Then, $w$ has a neighbor in $P$.
\label{theorem:star_cutset}
\end{theorem}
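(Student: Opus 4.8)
The plan is to argue by contradiction. Suppose $w$ has no neighbor in $P$, and among all $(H,w)$-significant paths that have no neighbor of $w$, choose one, $P = p_1 \dd \hdots \dd p_k$, with $k$ minimum; fix $a \in N_H(p_1) \setminus N_H(w)$ and $b \in N_H(p_k)$ as in the definition of a significant path, so that $a$ and $b$ are distant in $H$ with respect to $w$. I would first dispose of the degenerate configurations. The case $k=1$: then $p_1$ is a non-neighbor of $w$ with two $H$-neighbors $a,b$ not lying in a common extended neighborhood of $w$, so by Lemma~\ref{lem:extended_nhbrhd} $H\cup\{p_1,w\}$ is MNC configuration~(6); since $a\in N_H(p_1)\setminus N_H(w)$, the vertex $p_1$ must play the high-degree role and $w$ the other, which makes $w$ a hub with exactly four neighbors in $H$ while $p_1$ is a major vertex with at least six — and the hypothesis (``$w$ is not a hub, or every major vertex for $H$ is a hub'') is contradicted in either case. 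The case $p_1\in V(H)$ or $p_k\in V(H)$: here one uses that a hole has no chord (so a neighbor of $p_1$ in $H$ must be a hole-neighbor), together with the observation that a connected subpath of $H$ avoiding $N_H(w)$ lies in the interior of a single $w$-sector; a short argument lets us either shorten $P$ or put $a,b$ into one extended neighborhood, a contradiction. So we may assume $k\ge 2$ and $p_1,p_k\notin V(H)$.

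Next I would prove that $P^*$ is anticomplete to $H$. Suppose some internal $p_i$ ($1<i<k$) has a neighbor $c\in V(H)$. If $c\notin N_H(w)$, then $c$ lies in a unique extended neighborhood of $w$; since $a$ and $b$ do not lie in a common extended neighborhood, at least one of the pairs $\{a,c\}$, $\{b,c\}$ is distant, and correspondingly one of $p_1\dd\hdots\dd p_i$ or $p_i\dd\hdots\dd p_k$ is a strictly shorter $(H,w)$-significant path with no neighbor of $w$, contradicting minimality. If $c\in N_H(w)$, then first (running the previous paragraph with any neighbor of $p_i$ outside $N_H(w)$) we may assume $N_H(p_i)\subseteq N_H(w)$, and then Lemma~\ref{lem:extended_nhbrhd} applies to $p_i$ (note $p_iw\notin E(G)$): either $N_H(p_i)$ lies in one extended neighborhood, or $H\cup\{p_i,w\}$ is MNC configuration~(6). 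In the first subcase, and in the second (where the four common neighbors of $p_i$ and $w$ are spread across $H$, so that the hub hypothesis does not preclude it), one still finds a neighbor of $p_i$ in $H$ distant from $a$, giving a shorter significant path and the desired contradiction.

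Now $P^*$ is anticomplete to $H$, $w$ is anticomplete to $\{p_1,p_k\}$, $p_1,p_k\notin V(H)$, and $P$ is $(H,w)$-significant, so Lemma~\ref{lemma:common_nbr} yields a vertex $d\in V(H)$ adjacent to both $p_1$ and $p_k$. I would finish via the extended-neighborhood structure. If $d\notin N_H(w)$, then $d$ lies in a unique extended neighborhood, and by Lemma~\ref{lem:extended_nhbrhd} (using the hub hypothesis to rule out MNC~(6) for $p_1$, and for $p_k$ after checking, via minimality applied to the length-$0$ path $\{p_k\}$, that $p_k$ has a neighbor in $H$ outside $N_H(w)$) both $N_H(p_1)$ and $N_H(p_k)$ lie in that same extended neighborhood, hence so do $a$ and $b$ — a contradiction. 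Therefore $d\in N_H(w)$. Applying minimality to the single-vertex paths $\{p_1\}$ and $\{p_k\}$ shows $a,d$ are not distant and $b,d$ are not distant, which together with $d\in N_H(w)$ pins down that $a$ is interior to a $w$-sector $Q_1$ with $d$ on $\overline{Q_1}$, and symmetrically for $b$ and a $w$-sector $Q_k$ with $Q_1\ne Q_k$. Finally, using the hole $D$ obtained from $H$ by rerouting the short $a$-to-$b$ arc through $p_1\dd\hdots\dd p_k$, together with $w$ (which is a pendant of $D$ at $d$) and the long $a$-to-$b$ arc $R$ of $H$, I would exhibit a theta or a pyramid on these vertices, contradicting $G\in\C$.

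I expect the difficulty to concentrate in two places. The first is the $c\in N_H(w)$ subcase of the reduction to $P^*$ anticomplete to $H$: a vertex of $N_H(w)$ can lie in two extended neighborhoods, so the clean ``one of $\{a,c\},\{b,c\}$ is distant'' dichotomy can fail, and one must combine Lemma~\ref{lem:extended_nhbrhd} applied to $p_i$ with the hub hypothesis to force the situation back into a shape where a truncation is still available. The second is the concluding step after $d$ is produced: since $d$ may again lie in two extended neighborhoods, turning the extended-neighborhood bookkeeping into an explicit induced theta or pyramid requires a careful choice of the three paths and a verification that they are induced, and that is where most of the casework will live.
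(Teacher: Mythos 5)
Your proposal follows the paper's skeleton (minimal counterexample, dispose of $k=1$ and of vertices of $P$ on $H$, reduce to $P^*$ anticomplete to $H$, invoke Lemma~\ref{lemma:common_nbr}, finish by casework on the common neighbor), but it has a genuine gap at the step ``$P^*$ is anticomplete to $H$,'' and that step is where essentially all of the difficulty of the theorem lives. Your dichotomy works only when the neighbor $c$ of an internal vertex $p_i$ satisfies $c \notin N_H(w)$: then $c$ lies in a unique extended neighborhood and one of the two truncations is $(H,w)$-significant. When $c \in N_H(w)$ the truncation device is simply unavailable, and not for a repairable bookkeeping reason: the definition of an $(H,w)$-significant path requires the witnessing neighbor of the \emph{first} vertex to lie in $N_H(p_1) \setminus N_H(w)$, so once you have reduced to $N_H(p_i) \subseteq N_H(w)$, the path $p_i \dd \hdots \dd p_k$ can never be significant via a neighbor of $p_i$ — your assertion that ``one still finds a neighbor of $p_i$ in $H$ distant from $a$, giving a shorter significant path'' is internally inconsistent with that reduction. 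Concretely, an internal $p_i$ that is a cap on the edge $xx'$ (where $Q = x \hdots y$ is the $w$-sector containing $a$ and $x' \in N_H(w)$) defeats both truncations, and no contradiction with minimality arises.

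The paper does not prove anticompleteness at this stage; its claim (2) establishes only the weaker statement that $N_H(P^*) \subseteq \{x\} \cup (N_H(w) \cap \{x'\})$ or the symmetric containment on the $y$-side, i.e., all stray attachments of $P^*$ sit at one end of $Q$ inside $N_H(w)$. Eliminating these attachments is then the content of claims (3)--(6): one builds the hole $J = P \cup R$ by rerouting through the $y$-side of $H$, shows $w$ has a controlled neighborhood on $J$, and runs a long analysis (subclaims (4.1)--(4.4), involving the hub hypothesis, Lemma~\ref{lemma:9} applied to the path $x \dd w$ against $J$, and several pyramid and theta constructions) before concluding $N_H(P^*) = \emptyset$ in (6). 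Your proposal compresses all of this into one sentence, and your own closing remarks identify the right obstruction (a vertex of $N_H(w)$ can lie in two extended neighborhoods) without resolving it. The final step after Lemma~\ref{lemma:common_nbr} is also left as ``exhibit a theta or a pyramid,'' whereas the paper needs a further page of casework (ruling out $T \cap \{x,x'\}$ or $T \cap \{y,y'\}$ empty, ruling out $x,y \in T$ and $x',y \in T$, and finally using MNC configuration (5)); but the decisive missing idea is the treatment of $P^*$'s attachments inside $N_H(w)$.
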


\begin{proof}
We may assume that no subpath of $P$ is $(H, w)$-significant. Suppose that $w$ is anticomplete to $P$. Let $a, b \in V(H)$ be as in the definition of a significant path, i.e., $a \in N_H(p_1) \setminus N_H(w)$, $b \in N_H(p_k)$, and $a$ and $b$ are distant in $H$ with respect to $w$. Let $Q = x \hdots y$ be the $w$-sector containing $a$, and let $x'$ and $y'$ be the neighbors of $x$ and $y$ in $H \setminus Q$, respectively. Possibly $x' = b$ (resp. $y' = b$), in which case $w$ is not adjacent to $x'$ (resp. $y'$) since $a$ and $b$ are distant in $H$ with respect to $w$.

Suppose that $k=1$. Since $a$ and $b$ are distant in $H$, $p_1 \notin V(H)$. So, by Lemma~\ref{lemma:pk_anticomplete_Q*} and Lemma~\ref{lemma:2}, $p_1$ is major but not a hub. It follows that $w$ is not a hub. But this contradicts Lemma~\ref{lem:extended_nhbrhd}. Therefore, $k > 1$.

\medskip

\noindent \emph{(1) $P$ is disjoint from $H$}.

Suppose that $p_i \in P$ is in $H$. Since no subpath of $P$ is $(H, w)$-significant, it follows that $a$ and $p_i$ are not distant in $H$ with respect to $w$. Then, either $p_i \in Q^*$ or $p_iw \in E(G)$. Since $w$ is anticomplete to $P$, $p_iw \not \in E(G)$, so $p_i \in Q^*$. Note that since $p_i \in Q^*$ and $b \notin Q$, $i < n$. Then, $p_{i+1} \dd \hdots \dd p_k$ is $(H, w)$-significant, a contradiction. This proves (1).

\medskip

\noindent \emph{(2) Either $N_H(P^*) \subseteq \{x\} \cup (N_H(w) \cap \{x'\})$ or $N_H(P^*) \subseteq \{y\} \cup (N_H(w) \cap \{y'\})$}. 

No vertex $p_i \in P \setminus \{p_1\}$ can have a neighbor in $Q^*$, otherwise $p_i \dd \hdots \dd p_k$ is $(H, w)$-significant. Similarly, no vertex $p_i \in P \setminus \{p_k\}$ can be adjacent to a vertex $v$ such that $a$ and $v$ are distant in $H$ with respect to $w$, otherwise $p_1 \dd \hdots \dd p_i$ is $(H, w)$-significant. It follows that $N_H(P^*) \subseteq \{x, y\} \cup (\{x', y'\} \cap N_H(w))$. 

Consider a vertex $p_i \in P^*$ such that $p_i$ has neighbors in $H$. Suppose $p_i$ is not a cap or a pendant, so by Lemmas~\ref{lemma:1} and~\ref{lemma:2}, $N_H(p_i) = \{x, y, x', y'\}$. Then, since $Q = x \hdots a \hdots y$ is a $w$-sector and $p_ix, p_iy \in E(G)$, by Lemma~\ref{lemma:4} it follows that $xy \in E(G)$, a contradiction. Therefore, $p_i$ is a cap or a pendant for every $p_i \in P^*$ that has a neighbor in $H$.

Now, assume that there exist $p_i, p_j \in P^*$ such that $N_H(p_i) \subseteq \{x\} \cup (N_H(w) \cap \{x'\})$ and $N_H(p_j) \subseteq \{y\} \cup (N_H(w) \cap \{y'\})$. Consider the shortest path $R$ from $\{x\} \cup (N_H(w) \cap \{x'\})$ to $\{y\} \cup (N_H(w) \cap \{y'\})$ through $P^*$. By Lemma \ref{lemma:9}, the endpoints of $R^*$ have a common neighbor in $H$ or are pendants of $H$ with adjacent neighbors, a contradiction since $b \in H \setminus (Q \cup (\{x', y'\} \cap N_H(w)))$. This proves (2).

\medskip

In view of (2), we assume from now on that $N_H(P^*) \subseteq \{x\} \cup (N_H(w) \cap \{x'\})$. Let $H_x$ and $H_y$ be the paths in $H$ from $a$ to $b$ through $x$ and $y$, respectively. Since $p_1$ has a neighbor in the interior of a $w$-sector, $H \cup \{w, p_1\}$ is not MNC configuration (6). By Lemma \ref{lem:extended_nhbrhd}, it follows that $N_H(p_1) \subseteq  Q \cup (\{x', y'\} \cap N_H(w))$, and by Lemma \ref{lemma:pk_anticomplete_Q*}, it follows that $N_H(p_k) \subseteq H \setminus Q^*$. Therefore, if $p_1$ and $p_k$ have a common neighbor $v$ in $H_y$, then $v \in \{y, y'\}$. Let $r_1 = r_s = v$ if $p_1$ and $p_k$ have a common neighbor $v$ in $H_y$. Otherwise, let $r_1$ be the neighbor of $p_1$ in $H_y$ that is furthest from $a$, and let $r_s$ be the neighbor of $p_k$ in $H_y$ that is furthest from $b$. Let $R = r_1 \hdots r_s$ be the path from $r_1$ to $r_s$ through $H_y$. Note that $r_1$ is between $a$ and $r_s$ unless $r_1 = y'$ and $r_s = y$, in which case $p_ky', p_1y \not \in E(G)$. It follows that $P \cup R$ is a hole when $P$ has length at least two.

\medskip

\noindent \emph{(3) $w$ has a neighbor in $R \cap \{y, y'\}$.}

Because $N_H(p_1) \subseteq Q \cup (\{x', y'\} \cap N_H(w))$, it follows that $N_{H_y}(p_1) \subseteq Q \cup (\{y'\} \cap N_H(w))$. If $r_1 \neq y'$, then $y \in R$ and $wy \in E(G)$, so $w$ has a neighbor in $R \cap \{y, y'\}$. If $r_1 = y'$, then $p_1y' \in E(G)$, so $wy' \in E(G)$, and $w$ has a neighbor in $R \cap \{y, y'\}$. This proves (3).

\medskip

\noindent \emph{(4) If $P \cup R$ is a hole, then $x$ is anticomplete to $P \cup R$.}

Let $J$ be the hole given by $P \cup R$. We prove a number of subclaims.

\medskip

\noindent \emph{(4.1) $x' \not \in J$}.

Suppose $x' \in J$.  Then $x'=r_s=b$, and so $x'$ is non-adjacent to $w$, since otherwise $a$ and $b$ are not distant in $H$ with respect to $w$. By Lemma~\ref{lemma:9} applied to $J$ and the path $x \dd w$, it follows that $w$ and $x$ are strictly nested with respect to the hole $J$, and so $x$ and $w$ are both pendants of $J$ with adjacent neighbors in $J$. Let $N_J(w)=\{x''\}$, then $x''$ is the neighbor of $x'$ in $H \setminus x$. Since $w$ has a unique neighbor in $J$, it follows from (3) that $x'' \in \{y,y'\}$. If $x'' = y'$, then $H = y \dd Q \dd x \dd x' \dd x'' \dd y$ and $N_H(w) = \{x, y, y'\}$, so $H \cup \{w\}$ is a pyramid, a contradiction. So $x'' = y$. But now $H = y \dd Q \dd x \dd x' \dd y$ and $N_H(y) = \{x, y\}$, so $H \cup \{w\}$ is a theta, a contradiction. This proves (4.1).

\medskip

\noindent \emph{(4.2) If $w$ is a hub, then $p_1$ is anticomplete to $\{x',y'\}$}.

Suppose $w$ is a hub and  $p_1$ is adjacent to $y'$ (the argument is similar if $p_1$ is adjacent to $x'$). Since $p_1$ is adjacent to $a$ and to $y'$, we deduce that $p_1$ is either a clone of $y$ or $p_1$ is major for $H$. Since $w$ is a hub, it follows that if $p_1$ is major for $H$, then $p_1$ is a hub for $H$. In both cases, $N_H(p_1) \setminus Q=\{y'\}$. Then, $G$ contains a pyramid from $y'$ to $xx'w$ through the paths $y' \dd w$, $y' \dd p_1 \dd Q \dd x$, and the path from $y'$ to $x'$ with interior in $H \setminus Q$, a contradiction. This proves (4.2).

\medskip

Suppose that $x$  has a neighbor in $J$. Since, by (4.1), $x' \not \in J$, it follows that $x$ is anticomplete to $R$, so $x$ has a neighbor in $P$. We apply Lemma~\ref{lemma:9} to $J$ and the path $x \dd w$. Since $w$ is anticomplete to $P$, we have that $x$ and $w$ have no common neighbor in $J$.  Consequently, $w$ and $x$ are strictly nested with respect to $J$, and so $x$ and $w$ are both pendants of $J$ with adjacent neighbors in $J$. Since $x$ is anticomplete to $R$, and $w$ is anticomplete to $P$, there are two possibilities:
\begin{enumerate}
\itemsep0em
\item  $N_J(x) = \{p_k\}$, $N_J(w)=\{r_s\}$, or
\item $N_J(x) = \{p_1\}$, $N_J(w)=\{r_1\}$.
\end{enumerate}
Suppose the former holds. By~(3), $r_s \subseteq \{y,y'\} \cap N(w)$. If $p_k$ is adjacent to $y$ (so $r_s = y$), then $G$ contains a theta between $x$ and $r_s$ given by the  paths $x \dd w \dd r_s$, $x \dd p_k \dd r_s$ and $x \dd Q \dd r_s$, a contradiction. It follows that $p_k$ is non-adjacent to $y$, and so $r_s=y'$. Then, $G$ contains a pyramid from $x$ to  $yy'w$ through the paths $x \dd w$, $x \dd p_k \dd r_s$ and $x \dd Q \dd y$, a contradiction. This proves that the former case does not hold, and therefore the latter holds.

By~(3), $r_1 \in \{y,y'\}$. If $r_1=y$, let $r_1'=y'$, and if $r_1=y'$ let $r_1'$ be the neighbor of $r_1$ in $H \setminus \{y\}$. Let $M$ be the subpath of $H \setminus \{a\}$ from $r_1'$ to $x'$. Suppose that both $w$ and $p_k$ have neighbors in $M^*$. Then there is a path $M'$ from $w$ to $p_k$ with $M'^* \subseteq M^*$. Now, $G$ contains a theta between $p_1$ and $w$ through the paths $p_1 \dd r_1 \dd w$,  $p_1 \dd x \dd w$ and $p_1 \dd P \dd p_k \dd M' \dd w$, a contradiction. This proves that either $p_k$ or $w$ is anticomplete to $M^*$.

\medskip

\noindent \emph{(4.3) $w$ has no neighbor in $M^*$}.

Suppose that $w$ has a neighbor in $M^*$. Then, $p_k$ is anticomplete to $M^*$. Since $w$ has a neighbor in $M^*$, and $N_R(w)=\{r_1\}$, it follows that $r_s \neq x'$ and $r_s$ is non-adjacent to $x'$. Consequently, $r_s \in \{y,y',r_1'\}$. Since $r_1 \in \{y,y'\} \cap N(w)$ and $a$ and $b$ are distant in $H$ with respect to $w$, it follows that either $b=r_1'$, or $b=x'$ and $x'$ is non-adjacent to $w$. Also, since $w$ has a unique neighbor in $J$, it holds that if $r_s=y$ then $r_1=y$. 

Suppose $x'$ has a neighbor in $P \setminus p_1$. Then, there is a path $P'$ from $x'$ to $p_k$ with interior in $P^*$. It follows from the minimality of $k$ that $P'$ is not $(H,w)$-significant. If $x'$ is non-adjacent to $w$, then $x'$ and $r_s$ are distant in $H$ with respect to $w$ since $w$ has a neighbor in $M^*$, and so $P'$ is $(H,w)$-significant, a contradiction. It follows that $x'$ is adjacent to $w$, and so $x' \neq b$ and $b = r_1'$. Suppose $r_1' \in R$. Then, $w$ is non-adjacent to $r_1'$ since $N_R(w) = \{r_1\}$. Now, we get a contradiction applying Lemma~\ref{lemma:9} to the path $x' \dd w$ and the hole $J$. This implies that $r_1' \not \in R$, and so $r_1=r_s$. (Indeed, if $r_1 \neq r_s$, then $r_1=y$, $r_s=y'$, $p_ky \notin E(G)$, $wy' \notin E(G)$, $r_1'=b=y'$, and hence the path $x' \dd w$ and the hole $J$ contradict Lemma~\ref{lemma:9}.) Since $J$ is a hole, we have $k>2$. Again, by Lemma~\ref{lemma:9} applied to the path $x' \dd w$ and the hole $J$, it follows that $x'$ is a pendant for $J$, and $N_J(x')=p_k$. But now $G$ contains a pyramid from $r_1$ to $xx'w$ with paths $r_1 \dd p_1 \dd x$, $r_1 \dd w$ and $r_1 \dd p_k \dd x'$, a contradiction. This proves that $x'$ is anticomplete to $P \setminus p_1$. 

Since $N_H(P^*) \subseteq \{x\} \cup (N_H(w) \cap \{x'\})$, $x'$ is anticomplete to $P \setminus p_1$, and $p_k$ is anticomplete to $M^*$, it follows that $P \setminus p_1$ is anticomplete to $M \setminus r_1'$. Since $x'$ is non-adjacent to $p_k$, it follows that $b=r_1'$. Since $k>1$ and by minimality of $k$, we deduce that $p_1$ is non-adjacent to $r_1'$. If $r_1' \in R$, then $G$ contains a theta between $p_1$ and $r_1'$ through the paths $p_1 \dd r_1 \dd r_1'$, $p_1 \dd x \dd H_x \dd r_1'$ (possibly shortcutting through the edge $p_1x'$), and $p_1 \dd P \dd p_k \dd r_1'$, a contradiction. This proves that $r_1' \not \in R$, and so $r_s \in \{y, r_1\}$. Since $N_J(x) = \{p_1\}$, $x'$ is anticomplete to $P \setminus \{p_1\}$, and $N_H(P^*) \subseteq \{x, x'\}$, it follows that $P^*$ is anticomplete to $H$. By Lemma~\ref{lemma:common_nbr}, $p_1$ and $p_k$ have a common neighbor in $H$. It follows that either $r_1=r_s=y$ or $r_1=r_s=y'$. Since $P \cup R$ is a hole, it follows that $k>2$. But now $G$ contains a pyramid from $p_1$ to $r_1r_1'p_k$ through the paths $p_1 \dd r_1$, $p_1 \dd P \dd p_k$, and $p_1 \dd x \dd H_x \dd r_1'$ (possibly shortcutting through $p_1x'$). This proves (4.3).

\medskip

\noindent \emph{(4.4) $w$ is a hub for $H$}.

If $w$ has no neighbor in $M \setminus \{x', y'\}$, then by Lemma \ref{lemma:1}, $w$ is a hub for $H$. Suppose $w$ has a neighbor in $M \setminus \{x', y'\}$. Since, by (4.3), $w$ is anticomplete to $M^*$, it follows that the only neighbor of $w$ in $M \setminus \{x', y'\}$ is $r_1'$. Thus, $r_1' \neq y'$, and so $r_1 = y'$. Since $r_1 = y'$ (and thus $p_1$ is adjacent to $y'$), it follows that $w$ is adjacent to $y'$. Since $N_J(w) = \{r_1\}$, it holds that $r_1 = r_s = y'$, and since $J$ is a hole, $P$ has length at least two. If $w$ is non-adjacent to $x'$, then $p_1$ is non-adjacent to $x'$ (as $N_H(p_1) \subseteq  Q \cup (\{x', y'\} \cap N_H(w))$) and hence $G$ contains a pyramid from $x$ to $y'r_1'w$ through $x \dd p_1 \dd y'$, $x \dd w$, and $x \dd x' \dd M^* \dd r_1'$, a contradiction. Therefore, $w$ is adjacent to $x'$. Suppose that the only neighbor of $p_k$ in $M$ is $r_1'$. Then, $G$ contains a pyramid from $p_1$ to $p_kr_1r_1'$ through $p_1 \dd r_1$, $p_1 \dd P \dd p_k$ (recall that $P$ has length at least two), and $p_1 \dd x \dd x' \dd H_x \dd r_1'$ (possibly shortcutting through the edge $p_1 \dd x'$), a contradiction. So $p_k$ has a neighbor in $M$ different from $r_1'$. Let $b'$ be the neighbor of $p_k$ in $M$ closest to $x'$. Now, $G$ contains a pyramid from $y'$ to $xx'w$ through $y' \dd w$, $y' \dd p_1 \dd x$, and $y' \dd p_k \dd b' \dd M \dd x'$, a contradiction. This proves (4.4).

\smallskip

It follows that $w$ is a hub and $N_H(w)=\{x,x',y,y'\}$. Consequently, by (4.2), $p_1$ is anticomplete to $\{x',y'\}$, and so $N_H(p_1) \subseteq Q$. Since $r_1 \in \{y,y'\}$ and $p_1$ is non-adjacent to $y'$, we deduce that $r_1 = y$. Since $N_J(w) = \{r_1\}$, it follows that $y' \not \in R$, and so $r_s = y$. Since $(H \setminus \{y'\}) \cup \{w,p_k\}$ is not a pyramid, it follows that $p_k$ is not a clone of $y'$. Since $a$ and $b$ are distant in $H$ with respect to $w$, it follows that $b \in H \setminus (Q \cup \{x', y'\})$. Since $p_k$ is adjacent to $y$ and to $b$, it holds that $p_k$ is a major vertex for $H$, and so $p_k$ is a hub by the assumption of the theorem. Consequently, $p_k$ is adjacent to $y'$, and $p_k$ is non-adjacent to $x$. Since $P \cup R$ is a hole and $r_1 = r_s$, it follows that $k>2$. But now $G$ contains a pyramid from $y$ to $wxx'$ given by paths $y \dd w$, $y \dd p_1 \dd x$ and $y \dd p_k \dd H_x \dd x'$.  This proves that $x$ is anticomplete to $J$ and completes the proof of (4).

\medskip

\noindent \emph{(5) If $x'$ is not anticomplete to $P \setminus p_k$, then $p_1x \in E(G)$.}

Assume $x'$ has a neighbor in $P \setminus p_k$, but $p_1x \not \in E(G)$. By our assumption, $N_H(P^*) \subseteq \{x\} \cup (N_H(w) \cap \{x'\})$, and by Lemma \ref{lem:extended_nhbrhd}, $N_H(p_1) \subseteq Q \cup (\{x', y'\} \cap N_H(w))$. Hence, $wx' \in E(G)$. Let $z$ be the neighbor of $x'$ in $P$ closest to $p_1$. Note that if $z \neq p_1$, then $P$ is of length at least two, so $P \cup R$ is a hole and by (4), $x$ is anticomplete to $P$. In particular, $x$ is anticomplete to $p_1 \dd P \dd z$. Consider the triangle given by $wxx'$. If $N_Q(p_1) = a$, then $G$ contains a pyramid from $a$ to $wxx'$ through $a \dd Q \dd y \dd w$, $a \dd Q \dd x$, and $a \dd P \dd z \dd x'$, a contradiction. Suppose $p_1$ has two non-adjacent neighbors in $Q$ and let $q$ and $q'$ be the neighbors of $p_1$ in $Q$ closest to $x$ and $y$, respectively. Then, $G$ contains a pyramid from $p_1$ to $wxx'$ through $p_1 \dd q' \dd Q \dd y \dd w$, $p_1 \dd q \dd Q \dd x$, and $p_1 \dd P \dd z \dd x'$, a contradiction. Finally, suppose $p_1$ has exactly two adjacent neighbors in $Q$ and let $N_H(p_1) = \{q, q'\}$, where $q$ is between $x$ and $q'$ in $Q$. Then, $G$ contains a prism between $p_1qq'$ and $x'xw$ through $p_1 \dd P \dd z \dd x'$, $q \dd Q \dd x$, and $q' \dd Q \dd y \dd w$, a contradiction. This proves (5).

\medskip

\noindent \emph{(6) If $P \cup R$ is a hole, then $\{x, x'\}$ is anticomplete to $P \setminus p_k$. In particular, $N_H(P^*) = \emptyset$.}

Suppose $P \cup R$ is a hole. By (4), $x$ is anticomplete to $P$. If $x'$ has neighbors in $P \setminus p_k$, then, by (5), $p_1x \in E(G)$, contradicting that $x$ is anticomplete to $P$. This proves the first assertion. Next, suppose that $N_H(P^*) \neq \emptyset$. Then, $P^* \neq \emptyset$, and so $P \cup R$ is a hole. Now, by the first assertion, $\{x, x'\}$ is anticomplete to $P \setminus p_k$. But $N_H(P^*) \subseteq \{x, x'\}$, a contradiction. This proves (6).

\medskip

By (6), $N_H(P^*) = \emptyset$, and so the symmetry between $x$ and $y$ is restored. Let $T = N_H(p_1) \cap N_H(p_k)$. By Lemma \ref{lemma:common_nbr}, $T \neq \emptyset$. Because $N_H(p_1) \subseteq Q \cup \{x', y'\}$ and $N_H(p_k) \subseteq H \setminus Q^*$, it follows that $T \subseteq \{x, x', y, y'\}$. Suppose first that one of $T \cap \{x, x'\}$ and $T \cap \{y, y'\}$ is empty. We may assume up to symmetry that $T \subseteq \{x, x'\}$. Because $p_1$ and $p_k$ do not have a common neighbor in $\{y, y'\}$, it follows that $P \cup R$ is a hole. Then, by (6), $\{x, x'\}$ is anticomplete to $p_1$, a contradiction. Therefore, we may assume that $T \cap \{x,x'\} \neq \emptyset$ and $T \cap \{y,y'\} \neq \emptyset$. By Lemma \ref{lemma:4} and since $p_k$ is anticomplete $Q^*$, it follows that $p_k$ is adjacent to at most one of $x$ and $y$, and so not both $x$ and $y$ are in $T$. Suppose that $x'$ and $y$ are both in $T$. Then, $w$ has three neighbors in the hole given by $x' \dd x \dd Q \dd y \dd p_k \dd x'$ and $w$ is not a clone or a major vertex for this hole, contradicting Lemmas~\ref{lemma:1} and~\ref{lemma:2}. This proves that not both $x'$ and $y$ are in $T$. By symmetry, not both $y$ and $x'$ are in $T$. It follows that $T=\{x',y'\}$. Because $p_1$ and $w$ are major and non-adjacent, and $p_1$ is adjacent to $x'$ and $y'$, it follows by Lemma \ref{lemma:7} that $H \cup \{p_1, w\}$ is MNC configuration (5). Therefore, $p_1$ is adjacent to $x$ and $y$. Since $T=\{x',y'\}$, it follows that $\{x,y\}$ is anticomplete to $V(P) \setminus \{p_1\}$. Further, because $p_1$ is not a hub, it follows that $w$ is not a hub, so $w$ has neighbors in $H \setminus (Q \cup \{x', y'\})$. Note also that $b \in H \setminus (Q \cup \{x', y'\})$.  Then, $G$ contains a theta between $p_1$ and $w$, through $x$, $y$, and $P \cup (H \setminus (Q \cup \{x', y'\}))$, a contradiction.
\end{proof}

Let $H = h_1 \dd h_2 \dd \dots \dd h_k \dd h_1$ be a hole in a graph $G \in \C$ and let $v \in V(G)$. We say that $v$ is a \emph{gem-center} if $k \geq 5$ and $N_H(v) = \{h_1, h_2, h_3, h_4\}$.

\begin{corollary}
Let $G \in \C$ and let $H$ be a hole in $G$ of length greater than six. Let $w$ be a major vertex for $H$ such that $w$ is not complete to $H$, $w$ is not a gem-center, and either $w$ is not a hub or every major vertex for $H$ is a hub. Then, $w$ is the center of a star cutset in $G$.
\end{corollary}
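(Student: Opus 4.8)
\emph{Proof idea.}
The plan is to show that $S := N[w]$ is itself a star cutset of $G$ with center $w$. Since $w$ is complete to $S \setminus \{w\} = N(w)$, the only thing to verify is that $G \setminus N[w]$ is disconnected, so assume for contradiction that $G \setminus N[w]$ is connected. The strategy is to produce two vertices of $H \setminus N[w]$ that are distant in $H$ with respect to $w$, link them by a path avoiding $N[w]$, and contradict Theorem~\ref{theorem:star_cutset}.

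The first step is to argue that $w$ has at least two $w$-sectors of length at least two. Since $w$ is major, Lemma~\ref{lemma:1} splits into two cases. If $w$ has exactly three, pairwise non-adjacent, neighbors in $H$, then each of the three $w$-sectors has length at least two. Otherwise $w$ has at least four neighbors in $H$; a $w$-sector has length exactly one precisely when its two ends are consecutive neighbors of $w$ along $H$ that happen to be adjacent, so if at most one $w$-sector had length at least two, then $N_H(w)$ would be a subpath of $H$ on at least four vertices. If $N_H(w) = V(H)$ then $w$ is complete to $H$, contrary to hypothesis; and if $N_H(w)$ is a proper subpath of $H$ on four vertices then $w$ is a gem-center, also contrary to hypothesis. (The case that needs care is a proper subpath on five or more vertices, which a priori also leaves only one long sector; I would check that this configuration is excluded under the standing hypotheses.) Hence $w$ has two $w$-sectors $Q_1 \neq Q_2$ of length at least two. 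Fix $a \in Q_1^*$ and $b \in Q_2^*$. Since the interior of a $w$-sector is anticomplete to $w$, we have $a, b \in V(H) \setminus N[w]$, and $a$ and $b$ are distant in $H$ with respect to $w$: each non-neighbor of $w$ on $H$ lies in the unique extended neighborhood of $w$ containing the $w$-sector whose interior it belongs to, and these are distinct for $Q_1$ and $Q_2$. In particular $a \neq b$; moreover $a$ and $b$ are non-adjacent, since two adjacent vertices of $H$ that are both non-neighbors of $w$ lie in a common $w$-sector, hence in a common extended neighborhood.

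For the second step, since $G \setminus N[w]$ is connected and $a, b \in V(H) \setminus N[w]$, there is an induced path $P$ from $a$ to $b$ with $V(P) \cap N[w] = \emptyset$, and $P$ has length at least two because $a$ and $b$ are non-adjacent. Let $P' = p_1 \dd \dots \dd p_m$ be the path obtained from $P$ by deleting its two ends $a$ and $b$, so $m \ge 1$. Then $p_1$ is adjacent to $a$ and $p_m$ is adjacent to $b$, whence $a \in N_H(p_1) \setminus N_H(w)$ and $b \in N_H(p_m)$, and $a$ and $b$ are distant in $H$ with respect to $w$; so $P'$ is $(H,w)$-significant. But $V(P') \subseteq V(P)$ is disjoint from $N[w]$, so $w$ has no neighbor in $P'$ — contradicting Theorem~\ref{theorem:star_cutset}, whose hypotheses (on $H$, that it has length greater than six; on $w$, that it is major and either not a hub or every major vertex for $H$ is a hub) are precisely those assumed here. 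This contradiction shows $G \setminus N[w]$ is disconnected, so $w$ is the center of a star cutset of $G$.

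The main obstacle is the sector count in the first step — specifically, confirming that excluding complete vertices and gem-centers really does force two $w$-sectors of length at least two (the subtle point being the ruling out of neighborhoods that are long subpaths of $H$). Once that is settled, everything else is an immediate application of Theorem~\ref{theorem:star_cutset} to the path $P'$.
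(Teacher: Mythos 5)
There is a genuine gap, and it is exactly the one you flagged: the configuration in which $N_H(w)$ is a proper subpath of $H$ on five or more vertices is \emph{not} excluded by the hypotheses. For a concrete witness, let $H = h_1 \dd h_2 \dd \dots \dd h_8 \dd h_1$ and let $w$ be adjacent to exactly $h_1,\dots,h_5$. Then $w$ is major, not complete to $H$, not a gem-center, and not a hub, yet one can check directly that $H \cup \{w\}$ contains no theta, pyramid, prism, or turtle (every triangle contains $w$, so there is no prism; there is only one vertex outside $H$ and only one hole through $w$, which kills turtles; and the candidate theta/pyramid paths always pick up a chord to $w$). In this configuration $w$ has a unique $w$-sector of length at least two, so your first step cannot produce two non-neighbors $a,b$ of $w$ that are distant with respect to $w$. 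Worse, your global strategy fails here, not just its first step: taking $G = H \cup \{w\}$, the set $N[w]$ is not a cutset at all ($G \setminus N[w] = \{h_6,h_7,h_8\}$ is connected), so no argument can show that $N[w]$ is a star cutset. The corollary is still true for this $G$ because $N[w] \setminus \{h_3\}$ is a star cutset centered at $w$.

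The paper's proof avoids this by asking for less: it picks a single non-neighbor $u$ of $w$ on $H$ and a vertex $v \in V(H)$ distant from $u$ with respect to $w$, where $v$ is \emph{allowed to be a neighbor of $w$} (the definition of ``distant'' and of an $(H,w)$-significant path only require the first endpoint $a$ to lie in $N_H(p_1) \setminus N_H(w)$; the second endpoint $b$ merely lies in $N_H(p_k)$). The existence of such a $v$ follows exactly as in your sector count, except that the only way it can fail is if the extended neighborhood containing $u$'s sector is all of $H$, forcing $N_H(w)$ into a subpath of length at most three, i.e.\ $w$ complete to $H$ or a gem-center. Then Theorem~\ref{theorem:star_cutset} shows $w$ has a neighbor in the interior of every $u$--$v$ path, so $N[w] \setminus \{v\}$ (rather than $N[w]$) separates $u$ from $v$ and is a star cutset with center $w$. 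Your second step is correct as written and transfers verbatim to this modified cutset; the fix is therefore to drop the requirement that both distant vertices avoid $N[w]$ and to delete $v$ from the star.
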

\begin{proof}
Let $u \in H$ such that $uw \not \in E(G)$. We claim that there exists a vertex $v \in H$ such that $u$ and $v$ are distant in $H$ with respect to $w$. Suppose otherwise. Let $Q$ be the $w$-sector containing $u$ and let $\overline{Q}$ be the extended neighborhood of $w$ containing $Q$. It follows that $\overline{Q} = H$, so $N_H(w)$ is contained in a subpath of $H$ of length at most three. Since $w$ is major, it follows that $N_H(w)$ is a subpath of $H$ of length exactly three, so $w$ is a gem-center, a contradiction. Let $v \in H$ be such that $u$ and $v$ are distant in $H$ with respect to $w$. It follows from Theorem \ref{theorem:star_cutset} that $w$ has a neighbor in the interior of every path from $u$ to $v$. Therefore, $u$ and $v$ are in different components of $G \setminus (N[w] \setminus v)$, so $w$ is the center of a star cutset in $G$. 
\end{proof}

\section{Structure of proper separators} \label{sec:structure_proper}

In this section, we consider minimal separators of graphs in $\C$. We start with the following result concerning minimal separators that are cliques.

\begin{lemma}[\hspace{1sp}\cite{BPS}]
For every graph $G$, there are at most $\mathcal{O}(|V(G)|)$ minimal clique separators of $G$ and they can be enumerated in time $\mathcal{O}(|V(G)||E(G)|)$.
\label{lem:clique_sep}
\end{lemma}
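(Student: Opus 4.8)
The statement is a classical result of \cite{BPS} (building on earlier work of Leimer and of Parra and Scheffler); I sketch the argument I would give. The strategy is to reduce the count to the number of minimal separators of a chordal graph, exploiting the fact that clique separators are ``non-crossing''. The one combinatorial ingredient to record first is this: if $C$ is a clique of $G$ and $S$ is any minimal separator of $G$, then $C$ does not cross $S$, because $C \setminus S$ is a clique of $G \setminus S$ and hence lies in a single connected component of $G \setminus S$. In particular the clique minimal separators of $G$ are pairwise non-crossing, and each is non-crossing with every minimal separator of $G$.

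Given this, I would extend the family of all clique minimal separators of $G$ to a maximal family $\mathcal{S}$ of pairwise non-crossing minimal separators of $G$. By the Parra--Scheffler correspondence between maximal non-crossing families of minimal separators and minimal triangulations, there is a minimal triangulation $H$ of $G$ (so $V(H)=V(G)$, $E(G)\subseteq E(H)$, $H$ chordal, and $H$ edge-minimal with these properties) whose set of minimal separators is exactly $\mathcal{S}$. Here one checks that a clique minimal separator $C$ of $G$ indeed survives in $H$: since the fill edges of a minimal triangulation lie inside the ``blocks'' determined by $\mathcal{S}$, no fill edge joins two distinct components of $G \setminus C$ (a clique is crossed by no such edge), so $C$ still disconnects $H$, and it remains inclusion-minimal as a separator. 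Finally, $H$ is chordal on $n := |V(G)|$ vertices, so it has a clique tree with at most $n$ bags, hence at most $n-1$ edges; its minimal separators are exactly the intersections of adjacent bags, so $H$ has at most $n-1$ minimal separators. Since the clique minimal separators of $G$ are contained in $\mathcal{S}$, which is the set of minimal separators of $H$, there are at most $n-1 = \mathcal{O}(n)$ of them.

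For the time bound, a minimal triangulation $H$ of $G$ can be computed in time $\mathcal{O}(nm)$ (for instance by \textsc{MCS-M} or \textsc{LB-Triang}), its clique tree and the resulting list of at most $n-1$ candidate separators can be read off in time $\mathcal{O}(n+m)$, and each candidate $C$ can be tested in time $\mathcal{O}(m)$ for being a clique of $G$ with $G \setminus C$ disconnected; this yields $\mathcal{O}(|V(G)||E(G)|)$ overall. Alternatively, \cite{BPS} gives a direct $\mathcal{O}(nm)$ algorithm that computes the clique-minimal-separator decomposition and outputs the clique minimal separators as a byproduct.

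The step I expect to be the main obstacle is the correspondence argument: verifying carefully that a clique minimal separator of $G$ is still a minimal separator of the chosen minimal triangulation, i.e.\ that it both still disconnects the graph and stays inclusion-minimal after the fill edges are added. A more self-contained alternative avoids triangulations and argues directly with the clique-minimal-separator decomposition: repeatedly split $G$ along clique minimal separators until every piece (``atom'') has no clique separator; each split raises the number of pieces by at least one and consumes exactly one clique minimal separator, the number of atoms is $\mathcal{O}(n)$ (in fact at most $n$, and independent of the splitting order, by Leimer's theorem), so at most $n-1$ distinct clique minimal separators can appear, and one shows that every clique minimal separator of $G$ does appear in such a decomposition.
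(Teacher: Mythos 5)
Your argument is correct, and the paper itself gives no proof of this lemma---it is quoted directly from the cited reference \cite{BPS}. Your sketch (clique separators are non-crossing with all minimal separators, hence all appear among the at most $|V(G)|-1$ minimal separators of a single minimal triangulation, computable in $\mathcal{O}(|V(G)||E(G)|)$ time) is essentially the standard proof from that reference, so there is nothing to reconcile with the paper.
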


A separator in a graph is \emph{proper} if it is minimal and not a clique. By Lemma \ref{lem:clique_sep}, we restrict our attention here to proper separators. Our goal is to prove that graphs in $\C$ have polynomially many proper separators.

Let $C$ be a minimal separator of a graph $G$. A connected component $D$ of $G \setminus C$ is a \emph{full component for $C$} if every vertex of $C$ has a neighbor in $D$, i.e., $N(D) = C$. Recall that there are at least two full components for every minimal separator. The next lemma, while not necessary for our results, is a convenient observation about full components for proper separators of graphs in $\C$.

\begin{lemma}
If $C$ is a proper separator of a graph $G \in \C$, then there are exactly two full components for $C$.
\label{lemma:two_full_comps}
\end{lemma}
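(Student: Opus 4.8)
The plan is to argue by contradiction: suppose $C$ is a proper separator of $G\in\C$ with three distinct full components $D_1,D_2,D_3$, so $N(D_i)=C$ for $i=1,2,3$. Since $C$ is not a clique, there are two non-adjacent vertices $a,b\in C$. Using the full-component property, the strategy is to build, for each $i$, a path $R_i$ from $a$ to $b$ whose interior lies in $D_i$; since the $D_i$ are pairwise anticomplete and disjoint from $C$, these three paths are pairwise disjoint except at $a$ and $b$, and each $R_i^*$ is anticomplete to $C\setminus\{a,b\}$ and to the other $R_j$'s. We want to choose the $R_i$ to be \emph{induced} $a$–$b$ paths with interior in $D_i$; this is possible because $a$ and $b$ each have a neighbor in $D_i$ ($N(D_i)=C$) and $D_i$ is connected, so there is an induced path from a neighbor of $a$ in $D_i$ to a neighbor of $b$ in $D_i$ through $D_i$, which extends to an induced $a$–$b$ path.

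The next step is to understand the union $R_1\cup R_2\cup R_3\cup\{a,b\}$. Each $R_i$ has length at least $2$ (its interior is nonempty, lying in $D_i$, and $a\not\sim b$). For $i\neq j$, the graph on $V(R_i)\cup V(R_j)$ has no edges between $R_i^*$ and $R_j^*$ (different components) and no edges from $a$ or $b$ except along the paths themselves (the paths are induced, and $a,b$ have no neighbors in $R_j^*$ beyond those prescribed), so $V(R_i)\cup V(R_j)$ induces exactly a hole. Hence $R_1,R_2,R_3$ together with $a,b$ form a \emph{theta} between $a$ and $b$ — contradicting $G\in\C$.

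The one subtlety to handle carefully — and the main obstacle — is making sure the three constructed paths are genuinely pairwise vertex-disjoint apart from $a$ and $b$, and that $a$ and $b$ themselves are not "absorbed" into a path in a way that destroys the theta structure. Disjointness of interiors follows from $D_1,D_2,D_3$ being distinct components of $G\setminus C$; the interiors avoid $C$, so they avoid $a,b$. The only remaining possibility to rule out is that some $R_i$ might use a vertex of $C\setminus\{a,b\}$ — but by construction $R_i^*\subseteq D_i$, so this does not happen. One should also double-check the edge condition: a chord of the hole $V(R_i)\cup V(R_j)$ would have to join $R_i$ and $R_j$, but the only vertices shared or adjacent across the two paths are $a$ and $b$, and $ab\notin E(G)$, while $R_i$ is induced; so no chord exists. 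This gives the theta and completes the proof. (Note this argument in fact shows something slightly stronger: any minimal non-clique separator of \emph{any} theta-free graph has exactly two full components.)
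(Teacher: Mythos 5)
Your proposal is correct and is essentially the paper's own (much terser) argument: pick a non-edge $c_1c_2$ in $C$, route an induced $c_1$--$c_2$ path through each of the three full components, and observe that the union is a theta. The only cosmetic point is that ``extending'' an induced path between a neighbor of $a$ and a neighbor of $b$ need not stay induced if $a$ or $b$ has several neighbors on it; taking a shortest $a$--$b$ path in $G[D_i\cup\{a,b\}]$ avoids even this triviality.
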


\begin{proof}
Let $c_1c_2$ be a non-edge in $C$, and suppose that there are three full components for $C$. Then, $G$ contains a path from $c_1$ to $c_2$ through each of the three full components, and so $G$ contains a theta between $c_1$ and $c_2$, a contradiction.
\end{proof}

For the rest of this section, we let $C$ be a proper separator of a graph $G \in \C$, and we denote by $L$ and $R$ the two full components for $C$. Let $H$ be a hole with $V(H) \cap V(C) = \{c_1, c_2\}$, and let $H_L$ and $H_R$ be the two paths of $H$ between $c_1$ and $c_2$. We say that $H$ is a \emph{$(C, c_1, c_2)$-hole} if $H_L^* \subseteq L$ and $H_R^* \subseteq R$.  A vertex $v \in V(G)$ is \emph{$(c_1, c_2)$-heavy} with respect to $H$ if $v$ is major for $H$ and $c_1, c_2$ are distant in $H$ with respect to $v$. Note that if $v \in V(G)$ is $(c_1, c_2)$-heavy with respect to $H$, then $v$ has a neighbor in $H_L^*$ and a neighbor in $H_R^*$, and therefore $v \in C$. The \emph{frame} of $H$ is given by $F(H) = (c_1, c_2, \ell_1', \ell_1, r_1, r_1', \ell_2', \ell_2, r_2, r_2')$, where $\ell_1$ is the neighbor of $c_1$ in $H_L$, $\ell_1'$ is the neighbor of $\ell_1$ in $H_L^*$ if $\ell_1 \neq \ell_2$, and otherwise $\ell_1' = \ell_1 = \ell_2$. We define similarly $\ell_2, \ell_2', r_1, r_1', r_2, r_2'$. We denote by $V(F)$ the vertices of $F$. We call $F$ a $(C, c_1, c_2)$-frame if $F$ is the frame of a $(C, c_1, c_2)$-hole. A hole $H$ is an \emph{$F$-hole} if $H$ is a $(C, c_1, c_2)$-hole with frame $F$.

\begin{lemma}
Let $H$ be a $(C, c_1, c_2)$-hole with frame $F = (c_1, c_2, \ell_1', \ell_1, r_1, r_1',$ $\ell_2', \ell_2, r_2, r_2')$. Assume that $v \in V(G) \setminus V(H)$ has a neighbor both in $H_L^* \setminus \{\ell_1, \ell_2\}$ and in $H_R^* \setminus \{r_1, r_2\}$. Then, $v$ is $(c_1, c_2)$-heavy with respect to $H$.
\label{lemma:heavy_nbrs_interior}
\end{lemma}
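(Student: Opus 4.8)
The plan is to verify the two conditions in the definition of a $(c_1,c_2)$-heavy vertex separately: that $v$ is major for $H$, and that $c_1$ and $c_2$ are distant in $H$ with respect to $v$. The common input is a distance bound. Let $a_L$ be a neighbour of $v$ in $H_L^*\setminus\{\ell_1,\ell_2\}$ and $a_R$ a neighbour of $v$ in $H_R^*\setminus\{r_1,r_2\}$, which exist by hypothesis. Since $\{c_1,c_2\}$ separates $H$ into the two arcs $H_L^*$ and $H_R^*$, every path of $H$ between $a_L$ and $a_R$ passes through $c_1$ or $c_2$; and $a_L$ is at distance at least $2$ in $H$ from each of $c_1,c_2$ (because $a_L\ne\ell_1,\ell_2$), and similarly for $a_R$. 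Hence $\mathrm{dist}_H(a_L,a_R)\ge 4$; in particular, $a_L$ and $a_R$ do not lie in a common subpath of $H$ on three vertices.

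First I would show that $v$ is major. Since $v$ has a neighbour in $H$, if it were not major it would be minor, and then by Lemma~\ref{lemma:2} it is a pendant, cap, or clone for $H$, so $N_H(v)$ is contained in a subpath of $H$ on at most three vertices. But then $a_L$ and $a_R$ lie in such a subpath, contradicting $\mathrm{dist}_H(a_L,a_R)\ge 4$. So $v$ is major for $H$.

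Next I would show that $c_1$ and $c_2$ are distant in $H$ with respect to $v$. Suppose not; then $c_1,c_2$ both lie in some extended neighbourhood $N$ of $v$, say $N=Q\cup(\{x',y'\}\cap N_H(v))$ where $Q$ is a $v$-sector with endpoints $x,y$ and $x',y'$ are the neighbours of $x,y$ in $H\setminus Q$. Now $N$ is a subpath of $H$ containing both $c_1$ and $c_2$, so $V(N)$ contains all of $V(H_L)$ or all of $V(H_R)$ (the sub-arc of $N$ joining $c_1$ to $c_2$ is one of the two $c_1$–$c_2$ arcs $H_L,H_R$ of $H$); by the symmetry between $L$ and $R$ (with $a_L$ and $a_R$ exchanged) we may assume $V(H_L)\subseteq V(N)$. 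Then $N$ is the path $H_L$ possibly extended into $H_R^*$ at one or both ends, so neither endpoint of $N$ lies in $V(H_L^*)$. Next I track where $a_L$ can sit in $N$: it lies in $V(H_L)\subseteq V(N)$; it is not in $Q^*$, since $Q^*$ is anticomplete to $v$; and it is not one of $x',y'$, since those are endpoints of $N$ and $a_L\in V(H_L^*)$. Hence $a_L$ is an endpoint of $Q$, say $a_L=x$. Then $x'$ is an $H$-neighbour of $a_L$; since $a_L\ne\ell_1,\ell_2$, both $H$-neighbours of $a_L$ lie in $V(H_L^*)$, so $x'\in V(H_L^*)$, so $x'$ is not an endpoint of $N$, so $x'\notin V(N)$, so $x'\notin N_H(v)$ — which forces $x=a_L$ itself to be an endpoint of $N$, contradicting $a_L\in V(H_L^*)$. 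Thus no extended neighbourhood of $v$ contains both $c_1$ and $c_2$, so $c_1,c_2$ are distant in $H$ with respect to $v$; together with the previous step, $v$ is $(c_1,c_2)$-heavy with respect to $H$.

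The first step is immediate once the distance bound is established, so the main obstacle is the argument in the last paragraph: showing that an extended neighbourhood containing both $c_1$ and $c_2$ must engulf an entire side $H_L$ of $H$, and then eliminating the two borderline positions for the deep neighbour $a_L$ — being an endpoint of the $v$-sector $Q$, or being one of the two ``extension'' vertices $x',y'$. This is precisely where one needs $a_L$ to avoid $\ell_1$ and $\ell_2$, not merely to lie in $H_L^*$.
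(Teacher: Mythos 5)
Your proof is correct and follows essentially the same route as the paper's: assume $c_1,c_2$ lie in a common extended neighborhood $N=Q\cup(\{x',y'\}\cap N_H(v))$, observe that $N$ must then contain all of $H_L$ or all of $H_R$, and derive a contradiction from the fact that $Q^*$ is anticomplete to $v$ while the deep neighbor $a_L$ (or $a_R$) cannot sit at $x,y,x',y'$. The only differences are organizational — the paper splits on whether $c_1$ is adjacent to $v$ while you track the position of $a_L$ directly, and you explicitly verify that $v$ is major (which the paper leaves implicit) — so no substantive gap.
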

\begin{proof}
Suppose that $v$ is not $(c_1, c_2)$-heavy with respect to $H$. Then, $c_1$ and $c_2$ are in an extended neighborhood $\overline{Q}$ of $v$. Let $\overline{Q} = Q \cup (N_H(v) \cap \{x', y'\})$, where $Q = x \hdots y$ is a $v$-sector, and $x'$ and $y'$ are the neighbors of $x$ and $y$ in $H \setminus Q^*$, respectively. Then, $c_1$ and $c_2$ are either in $V(Q)$ or have a neighbor in $V(Q)$. Since $v$ has a neighbor in $H_L^* \setminus \{\ell_1, \ell_2\}$, it follows that $H_L^* \setminus V(Q) \neq \emptyset$. Similarly, $H_R^* \setminus V(Q) \neq \emptyset$. 

Suppose first that $c_1$ is not adjacent to $v$. Let $S$ be the $v$-sector of $H$ that contains $c_1$. Since $c_1v \notin E(G)$ and $c_1 \in \overline{Q}$, it follows that $S = Q$. Since $c_2 \in \overline{Q}$, either $v$ has no neighbor in $H_L^* \setminus \{\ell_1, \ell_2\}$ or $v$ has no neighbor in $H_R^* \setminus \{r_1, r_2\}$, a contradiction. Thus, $c_1v \in E(G)$, and similarly $c_2v \in E(G)$. But then, since $c_1, c_2 \in \overline{Q}$, either $H_L^* \setminus V(Q) = \emptyset$ or $H_R^* \setminus V(Q) = \emptyset$, a contradiction.
\end{proof}

The \emph{potential} of a $(C, c_1, c_2)$-hole $H$ is the total number of $(c_1, c_2)$-heavy vertices with respect to $H$. The following lemma shows that the potential of a $(C, c_1, c_2)$-hole only depends on its frame.

\begin{lemma}
Let $H_1$ and $H_2$ be $(C, c_1, c_2)$-holes with the same frame, given by $F(H_1) = F(H_2) = (c_1, c_2, \ell_1', \ell_1, r_1, r_1',$ $\ell_2', \ell_2, r_2, r_2')$. Then, $v \in V(G)$ is $(c_1, c_2)$-heavy with respect to $H_1$ if and only if $v$ is $(c_1, c_2)$-heavy with respect to $H_2$. In particular, the potential of $H_1$ and the potential of $H_2$ are equal.
\label{lemma:potential_depends_on_frame}
\end{lemma}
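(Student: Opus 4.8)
The plan is to reduce to the case where $H_1$ and $H_2$ differ in only one of their two arms, and then to show that, once the frame is fixed, $(c_1,c_2)$-heaviness of a vertex is forced.

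\textbf{Reduction via a mixed hole.} Write $H_1 = H_1^L\cup H_1^R$ and $H_2 = H_2^L\cup H_2^R$ for the two arms of $H_1$ and $H_2$, where the superscript indicates the side, so $(H_i^L)^*\subseteq L$ and $(H_i^R)^*\subseteq R$. Since $L$ and $R$ are distinct components of $G\setminus C$, they are anticomplete; hence $H_3:=H_1^L\cup H_2^R$ is again an induced cycle, its arms have interiors in $L$ and $R$ respectively, so $H_3$ is a $(C,c_1,c_2)$-hole, and because $F(H_1)=F(H_2)$ the first and last three vertices of each arm of $H_3$ agree with those of $H_1$ on the left and of $H_2$ on the right, so $F(H_3)=F(H_1)=F(H_2)$. (Note both arms have length at least two, since otherwise the frame would not be defined, so $|V(H_3)|\ge 4$.) It therefore suffices to prove the lemma when $H_1$ and $H_2$ share one arm: applying that to the pair $(H_1,H_3)$ and then, with the roles of $L$ and $R$ interchanged, to $(H_3,H_2)$ gives the general statement. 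Moreover, by symmetry it is enough to prove one implication in the shared-arm case.

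\textbf{The shared-arm case.} Assume $H_1^L = H_2^L=:H^L$ and $F(H_1)=F(H_2)=F$, and let $v$ be $(c_1,c_2)$-heavy with respect to $H_1$; we must show $v$ is $(c_1,c_2)$-heavy with respect to $H_2$. Since $v$ is heavy, $v\in C$, so $v\notin V(H_1)\cup V(H_2)$; moreover $N(v)\cap V(H^L)$ is the same for both holes, and $N(v)\cap V(F)$ is the same for both holes because the frames coincide. As observed before Lemma~\ref{lemma:heavy_nbrs_interior}, $v$ has a neighbor in $(H^L)^*$ and a neighbor in $(H_1^R)^*$. Using $G\in\C$, one first records that for a vertex with a neighbor in each arm, being major for $H_i$ is equivalent to not being a clone of $c_1$ or of $c_2$ (a vertex with exactly two hole-neighbors would create a theta, and a vertex whose three hole-neighbors induce a path through $c_1$ is a clone of $c_1$, since that path must be $\ell_1\dd c_1\dd r_1$; similarly for $c_2$); this depends only on $N(v)\cap V(F)$. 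The lemma then reduces to two claims: \emph{(a)} $v$ has a neighbor in $(H_2^R)^*$, and \emph{(b)} $c_1$ and $c_2$ are distant with respect to $v$ in $H_2$. For \emph{(b)}: if $v$ has a neighbor in $(H^L)^*\setminus\{\ell_1,\ell_2\}$ and one in $(H_2^R)^*\setminus\{r_1,r_2\}$ we are done by Lemma~\ref{lemma:heavy_nbrs_interior}, so we may assume not; one then inspects the $v$-sector of $H_2$ through $c_1$ and the one through $c_2$. Because $v$ has a neighbor in $(H^L)^*$ and, by \emph{(a)}, one in $(H_2^R)^*$, neither of these sectors wraps around to contain the other of $c_1,c_2$, so the only way $c_1$ and $c_2$ could lie in a common extended neighborhood of $v$ is via one of a few short-range configurations involving only the frame vertices near $c_1$ and near $c_2$ (for instance $vc_2\in E(G)$ together with $\ell_2$ being the unique neighbor of $v$ in $(H^L)^*$, which forces the sector through $c_1$ to terminate at $\ell_2$ with $c_2$ as its extension vertex). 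Each such configuration refers only to $H^L$ and $V(F)$, hence would equally well make $c_1,c_2$ non-distant with respect to $v$ in $H_1$, contradicting that $v$ is heavy there.

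\textbf{Main obstacle.} The hard part is claim \emph{(a)}: a heavy vertex whose only neighbors on $H_1^R$ lie deep inside $(H_1^R)^*$ (so that none of $r_1,r_1',r_2',r_2$ is a neighbor) must nevertheless have a neighbor on the replacement arm $(H_2^R)^*$. This cannot be argued locally and is where the global structure of $G\in\C$ enters: one uses that $R$ is a full component for $C$ (so $v$ has neighbors in $R$ and $H_2^R$ is an induced $c_1$--$c_2$ path through $R$ with the prescribed frame ends), together with Theorem~\ref{theorem:star_cutset} applied to $H_2$ --- noting $v$ is major for $H_2$, and that we may assume $v$ is not a hub or else that every major vertex for $H_2$ is a hub --- to exclude a replacement arm $H_2^R$ avoiding $N(v)$: such an arm would yield an $(H_2,v)$-significant path containing no neighbor of $v$, contradicting the theorem. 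Carrying this out, and the bookkeeping of extended neighborhoods in \emph{(b)}, is the technical heart of the proof.
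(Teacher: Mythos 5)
Your mixed-hole reduction is valid and pleasant: since $L$ and $R$ are anticomplete, $H_3=H_1^L\cup H_2^R$ is indeed an $F$-hole, so it suffices to treat the case of two $F$-holes sharing one arm (the paper does not make this reduction, but it is sound). However, the shared-arm argument itself has two genuine gaps. First, in claim \emph{(a)} you apply Theorem~\ref{theorem:star_cutset} to $H_2$; but the theorem needs an $(H_2,v)$-significant path, i.e.\ a pair of vertices of $H_2$ that are \emph{already known} to be distant in $H_2$ with respect to $v$, plus the non-hub hypothesis for $H_2$. If $v$ has no neighbor on $(H_2^R)^*$, then all of $N_{H_2}(v)$ lies on the shared arm, so $c_1$, $c_2$ and every vertex of $H_2^R$ lie in one common extended neighborhood of $v$ in $H_2$ and no significant path through that region exists; the theorem is vacuous there. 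The workable application (and the one the paper makes, in its claim~(1)) is to $H_1$, where heaviness of $v$ supplies the distant pair for free, taking the \emph{new} arm as the candidate significant path; one must also separately handle $v$ anticomplete to $\{\ell_1,\ell_2\}$ (via Lemma~\ref{lemma:9}), verify the non-hub hypothesis, and dispose of holes of length at most six.

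Second, and more seriously, claim \emph{(b)} is not reduced to ``short-range configurations involving only the frame.'' Having a neighbor in $(H_2^R)^*$ (your weak \emph{(a)}) does not prevent the extended neighborhood of $v$ in $H_2$ containing $c_1$ and $c_2$ from swallowing the whole arm $H_2^R$: this happens precisely when $N(v)\cap (H_2^R)^*\subseteq\{r_1,r_2\}$, e.g.\ $v$ adjacent to $c_1,r_1,r_2,c_2$ and to nothing deeper on $H_2^R$, so that the sector $r_1\dd\cdots\dd r_2$ has extended neighborhood containing both $c_1$ and $c_2$. This configuration is determined by the \emph{absence} of neighbors deep in $(H_2^R)^*$, which does not transfer to $H_1$: on $H_{1R}^*$ the vertex $v$ may have further neighbors that break up the corresponding sector and witness distantness there. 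So your transfer step fails exactly in the critical case. Closing it requires the stronger statement that $N(v)\cap H_{2R}^*\subseteq\{r_1,r_2\}$ forces $N(v)\cap H_{1R}^*\subseteq\{r_1,r_2\}$ --- this is the paper's claim~(1), proved with Lemma~\ref{lemma:9} and Theorem~\ref{theorem:star_cutset} applied to $H_1$ --- and it, rather than your \emph{(a)}, is the technical heart of the lemma.
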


\begin{proof}
Suppose $v \in V(G)$ is $(c_1, c_2)$-heavy with respect to $H_1$ and not with respect to $H_2$.

\medskip

\noindent \emph{(1) If $v$ has no neighbor in $H_{2L}^* \setminus \{\ell_1, \ell_2\}$, then $N(v) \cap H_{1L}^* \subseteq \{\ell_1, \ell_2\}$. Similarly, if $v$ has no neighbor in $H_{2R}^* \setminus \{r_1, r_2\}$, then $N(v) \cap H_{1R}^* \subseteq \{r_1, r_2\}$.}

By symmetry, it suffices to prove the first statement. So assume that $v$ has no neighbor in $H_{2L}^* \setminus \{\ell_1, \ell_2\}$. We may assume that $\ell_1, \ell_1', \ell_2, \ell_2'$ are all distinct, since otherwise the result clearly holds. In particular, $H_1$ and $H_2$ are both of length greater than six.

Since $v$ is $(c_1, c_2)$-heavy with respect to $H_1$, $v$ has a neighbor in both $H_{1L}^*$ and $H_{1R}^*$. Suppose $v$ is anticomplete to $\{\ell_1, \ell_2\}$. Then, there exists a path $P = p_1 \dd \dots \dd p_k$ in $(H_1 \setminus \{\ell_1,\ell_2\}) \cup \{v\}$ such that $P \cap H_2 = \emptyset$, $p_1$ has a neighbor in $H_{2L}^*$, $p_k$ has a neighbor in $H_{2R}$, and $P^*$ is anticomplete to $H_2$. Note that $v \in P$ and $p_1\in L$ (i.e. $p_1 \neq v$), so $P$ is of length at least 1. But then $P$ and $H_2$ contradict Lemma \ref{lemma:9}. So $v$ is not anticomplete to $\{\ell_1, \ell_2\}$. Thus, we may assume that $v$ is adjacent to $\ell_1$. Let $Q$ be the $v$-sector of $H_1$ that contains $\ell_1'$. Then, $\ell_1 \in V(Q)$. Since $c_1$ and $c_2$ are distant in $H_1$ with respect to $v$, it follows that $v$ is a major vertex for $H_1$. We claim that $v$ is not a hub for $H_1$. Suppose $v$ is a hub for $H_1$. Since $\ell_1 \in N(v)$ and $\ell_1' \not \in N(v)$, it follows that $c_1 \in N(v)$. But then $c_1$ and $c_2$ are not distant in $H_1$ with respect to $v$, a contradiction. This proves that $v$ is not a hub for $H_1$. Now, since $v$ is anticomplete to $\ell_1' \dd H_{2L} \dd \ell_2'$, it follows from Theorem \ref{theorem:star_cutset} that $\ell_1'$ and $\ell_2'$ are not distant in $H_1$ with respect to $v$. Since $v$ is not adjacent to $\ell_2'$, it follows that $\ell_2' \in Q$, so $N(v) \cap H_{1L}^* \subseteq \{\ell_1, \ell_2\}$. This proves (1).

\medskip

By Lemma \ref{lemma:heavy_nbrs_interior}, we may assume that $v$ has no neighbor in $H_{2L}^* \setminus \{\ell_1, \ell_2\}$. By (1), it follows that $N(v) \cap H_{1L}^* \subseteq \{\ell_1, \ell_2\}$.

\medskip

\noindent \emph{(2) $v$ has a neighbor in $H_{2R}^* \setminus \{r_1, r_2\}$.}

Assume that $v$ has no neighbor in $H_{2R}^* \setminus \{r_1, r_2\}$. Then, by (1), $N(v) \cap H_{1R}^* \subseteq \{r_1, r_2\}$. But now, $N(v) \cap H_1 = N(v) \cap H_2$, and so $c_1$ and $c_2$ are distant in $H_2$ with respect to $v$, a contradiction. This proves (2).

\medskip

Since $c_1$ and $c_2$ are not distant in $H_2$ with respect to $v$, there exists an extended neighborhood $\overline{Q}$ of $v$ in $H_2$ such that $c_1$ and $c_2$ are both in $\overline{Q}$. Let $\overline{Q} = Q \cup (N_{H_2}(v) \cap \{x', y'\})$ where $Q = x \hdots y$ is a $v$-sector in $H_2$ and $x'$ and $y'$ are the neighbors of $x$ and $y$ in $H_2 \setminus Q$, respectively. Since $\overline{Q}$ contains $c_1$ and $c_2$, it follows that either $H_{2L} \subseteq \overline{Q}$ or $H_{2R} \subseteq \overline{Q}$. Suppose that $H_{2L} \subseteq \overline{Q}$.  Since $c_1$ and $c_2$ are distant in $H_1$ with respect to $v$ and $N(v) \cap H_{1L}^* \subseteq \{\ell_1, \ell_2\}$, we may assume that $v$ is adjacent to $\ell_1$. Since $c_1$ is in $\overline{Q}$, it follows that $v$ is also adjacent to $c_1$. Because $\overline{Q}$ is an extended neighborhood of $v$ in $H_2$ that contains $c_1$ and $c_2$, $v$ is either non-adjacent to $\ell_2$, or $v$ is adjacent to $\ell_2$ and $c_2$. But now $c_1$ and $c_2$ are not distant in $H_1$ with respect to $v$, a contradiction.  Therefore, $H_{2R} \subseteq \overline{Q}$. However, by (2), $v$ has a neighbor in $H_{2R}^* \setminus \{r_1, r_2\}$, a contradiction.
\end{proof}

Let $F = (c_1, c_2, \ell_1', \ell_1, r_1, r_1',$ $\ell_2', \ell_2, r_2, r_2')$ be a $(C, c_1, c_2)$-frame. We say that a vertex $v \in V(G)$ is \emph{$F$-heavy} if there exists an $F$-hole $H$ such that $v$ is $(c_1, c_2)$-heavy with respect to $H$.  Note that Lemma \ref{lemma:potential_depends_on_frame} implies that an $F$-heavy vertex $v$ is $(c_1, c_2)$-heavy with respect to every hole $H$ with frame $F$. A vertex $v$ that is not $F$-heavy is said to be \emph{$F$-light}. The \emph{potential} of $F$ is the total number of $F$-heavy vertices. 

Let $c_1, c_2 \in C$. We denote by $\dist_L(c_1, c_2)$ and $\dist_R(c_1, c_2)$ the length of the shortest path from $c_1$ to $c_2$ through $L$ and $R$, respectively, and we let $\dist(c_1, c_2) = \min(\dist_R(c_1, c_2), \dist_L(c_1, c_2))$. We say that $(c_1, c_2)$ is a \emph{long pair} of $C$ if $\dist(c_1, c_2) \geq 4$. A $(C, c_1, c_2)$-frame $F$ is \emph{long} if $(c_1, c_2)$ is a long pair of $C$. A proper separator $C$ is \emph{rich} if there exist $c_1, c_2 \in C$ such that $(c_1, c_2)$ is a long pair, and \emph{poor} otherwise.

\begin{lemma}
Suppose $F$ is a $(C, c_1, c_2)$-frame, $H$ is an $F$-hole, and $c_3 \in C \setminus \{c_1, c_2\}$ is $F$-light. Let $P = p_k \dd \hdots \dd p_1 \dd c_3 \dd q_1 \dd \hdots \dd q_j$ be a path such that $c_3 \dd p_1 \dd \hdots \dd p_k$ is a path from $c_3$ to $H_L^*$ through $L$ and $c_3 \dd q_1 \dd \hdots \dd q_j$ is a path from $c_3$ to $H_R^*$ through $R$ (possibly $c_3 = p_k$ or $c_3 = q_j$), and assume $P$ has length at least two. Then, up to symmetry between $c_1$ and $c_2$, one of the following holds: 
\begin{enumerate}[(i)]
\itemsep-0.2em
\item $c_1$ and $c_2$ are anticomplete to $P^*$, $N_H(p_k) = \{\ell_1, c_1\}$, and $N_H(q_j) = \{c_1, r_1\}$,

\item $c_2$ is anticomplete to $P^*$, $c_1$ has neighbors in $P^*$, $p_k$ is either adjacent to $c_1$ or a pendant of $H$ with neighbor $\ell_1$, and $q_j$ is either adjacent to $c_1$ or a pendant of $H$ with neighbor $r_1$.
\end{enumerate}
\vspace{-\baselineskip}
\vspace{0.2cm}
\label{lemma:butterfly_end_shapes}
\end{lemma}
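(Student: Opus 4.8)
The plan is to analyse how the path $P$ attaches to the hole $H$, using that $c_3$ is $F$-light together with the excluded induced subgraphs. Write $P^* = V(P)\setminus\{p_k,q_j\}$ for the interior of $P$. First I would record the basic adjacency facts. Since $P$ is a path from $c_3$ to $H_L^*$ through $L$ joined at $c_3$ to a path from $c_3$ to $H_R^*$ through $R$, we have $p_k,q_j\notin V(H)$; moreover $N_H(p_k)$ is contained in the path $H_L$ of $H$ (as $p_k\in L$ is anticomplete to $R\supseteq H_R^*$, so its $H$-neighbours lie in $H_L^*\cup\{c_1,c_2\}=V(H_L)$), symmetrically $N_H(q_j)\subseteq V(H_R)$, and the only vertices of $H$ with a neighbour in $P^*$ are $c_1$ and $c_2$ (each $p_i$ with $i<k$ is anticomplete to $H_L^*$ and, lying in $L$, to $H_R^*$; similarly each $q_i$ with $i<j$; and $c_3\in C$ is anticomplete to $H_L^*\cup H_R^*$). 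In particular $P^*$ is anticomplete to $V(H)\setminus\{c_1,c_2\}$.

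Next I would prove two reductions. \emph{(a) At least one of $c_1,c_2$ has a neighbour in $V(P)$.} Otherwise $N_H(p_k)\subseteq H_L^*$ and $N_H(q_j)\subseteq H_R^*$ are disjoint, no interior vertex of $P$ has a neighbour in $H$, and $p_k,q_j$ are strictly nested with respect to $H$ (the arcs $H_L^*,H_R^*$ are separated by $\{c_1,c_2\}$ on $H$); then Lemma~\ref{lemma:9} forces $p_k$ and $q_j$ to be pendants of $H$ with adjacent neighbours, which is impossible since no vertex of $H_L^*$ is adjacent to a vertex of $H_R^*$. \emph{(b) Not both $c_1$ and $c_2$ have a neighbour in $P^*$.} Otherwise, choosing extremal neighbours of $c_1$ and of $c_2$ on $P^*$ and the subpath of $P$ between them, we get an induced $c_1c_2$-path $P'$ with $V(P')\setminus\{c_1,c_2\}\subseteq P^*$ anticomplete to $V(H)\setminus\{c_1,c_2\}$; then $H_L,H_R,P'$ are three internally disjoint induced paths between the non-adjacent vertices $c_1,c_2$ that pairwise induce holes, i.e., a theta --- a contradiction.

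By (b) and the symmetry exchanging $c_1$ with $c_2$ I may assume $c_2$ has no neighbour in $P^*$, and then I split into Case~A, where $c_1$ also has no neighbour in $P^*$ (so $P^*$ is anticomplete to $H$), and Case~B, where $c_1$ has a neighbour in $P^*$. In Case~B I would show, using the symmetry exchanging the $p$-side with the $q$-side, that $p_k$ is adjacent to $c_1$ or is a pendant of $H$ with neighbour $\ell_1$: the options for $N_H(p_k)\subseteq V(H_L)$ are classified by Lemmas~\ref{lemma:1} and~\ref{lemma:2} (pendant, cap, clone, or major), and for every option other than the two asserted ones I would exhibit a forbidden subgraph --- typically a theta between $c_1$ and a neighbour $a\ne\ell_1$ of $p_k$ in $H_L^*$ (routed along $H_L$, around $H$ through $c_2$, and along the subpath of $P$ from $p_k$ to the neighbour of $c_1$ on $P$ closest to $p_k$), or a pyramid when $p_k$ is a cap or a clone (the triangle living at $c_1$), or a prism in the remaining corner configurations. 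The identical argument applied to $q_j$ and $r_1$ then yields outcome (ii). In Case~A I would similarly pin down $N_H(p_k)$ and $N_H(q_j)$: by (a) and the available symmetries I may assume $c_1p_k\in E(G)$, and forbidden-subgraph arguments of the same flavour force $N_H(p_k)=\{\ell_1,c_1\}$, then $c_1q_j\in E(G)$ and $N_H(q_j)=\{c_1,r_1\}$, hence also $c_2$ anticomplete to all of $P$ --- this is outcome (i). The hypothesis that $c_3$ is $F$-light is used through the contrapositive of Lemma~\ref{lemma:heavy_nbrs_interior} (so $c_3$ has no neighbour in both $H_L^*\setminus\{\ell_1,\ell_2\}$ and $H_R^*\setminus\{r_1,r_2\}$) and, when $c_3=p_k$ or $c_3=q_j$, by applying $F$-lightness to the $F$-hole obtained by rerouting a segment of $H_L$ or $H_R$ through $P$, which rules out the configurations in which $c_3$ would become $(c_1,c_2)$-heavy.

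The main obstacle is the exhaustiveness of this last case analysis: the cases are indexed by the type of $p_k$ (and of $q_j$) with respect to $H$, by the four adjacencies $c_1p_k,c_2p_k,c_1q_j,c_2q_j$, by the position of $N_H(p_k)$ inside $H_L$ relative to $\ell_1,\ell_2$, and by whether $P^*$ meets $c_1$; in each case one must select exactly the right three paths and verify chordlessness. The degenerate situations --- $\ell_1=\ell_2$ or $r_1=r_2$, very short $H_L$ or $H_R$, $k=0$ or $j=0$, and the clone configurations at the corner $c_1$ --- need separate handling, and it is in several of these that $F$-lightness of $c_3$ is genuinely needed.
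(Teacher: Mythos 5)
Your outline follows the paper's proof: both arguments first rule out $c_1$ and $c_2$ simultaneously meeting $P^*$ via a theta through $H_L$, $H_R$, $P^*$, then split on whether $c_1$ meets $P^*$, and in each case invoke Lemma~\ref{lemma:9} to pin down how $p_k$ and $q_j$ attach to $H$. Two remarks on where your sketch is looser than what is actually needed. First, in Case~A your reduction~(a) is too weak: knowing that \emph{some} $c_i$ has a neighbour in $V(P)$ does not rule out the configuration $c_1p_k,c_2q_j\in E(G)$, $c_1q_j,c_2p_k\notin E(G)$, which is not covered by outcome~(i); and your stated order of deductions (``force $N_H(p_k)=\{\ell_1,c_1\}$, then $c_1q_j\in E(G)$'') cannot work, since e.g.\ $p_k$ a clone of $\ell_1$ with $N_H(p_k)=\{c_1,\ell_1,\ell_1'\}$ is only excluded by a pyramid whose apex triangle is $q_jc_1r_1$ --- you need $q_j$ before you can restrict $p_k$. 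The fix is exactly the Lemma~\ref{lemma:9} application you already run inside~(a): applied to the whole of $P$ (whose interior is clean in Case~A), the pendant outcome is impossible, so $p_k$ and $q_j$ have a \emph{common} neighbour, necessarily in $H_L\cap H_R=\{c_1,c_2\}$; this is how the paper anchors both ends at the same corner. Second, in Case~B the paper avoids your exhaustive typing of $p_k$ by applying Lemma~\ref{lemma:9} to the subpath $p_k\dd P\dd r$, where $r$ is the neighbour of $c_1$ in $P^*$ closest to $p_k$: since $N_H(r)=\{c_1\}$, the two outcomes of that lemma are precisely ``$p_kc_1\in E(G)$'' and ``$p_k$ is a pendant at $\ell_1$''. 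Finally, the $F$-lightness of $c_3$ plays no role in the paper's proof of this lemma (an $F$-heavy $c_3$ would force $P$ to have length $0$), so the rerouting machinery you describe for the degenerate cases is unnecessary.
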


\begin{proof}
If both $c_1$ and $c_2$ have neighbors in $P^*$, then $G$ contains a theta between $c_1$ and $c_2$ through $H_L$, $H_R$, and $P^*$, so we may assume that $c_2$ is anticomplete to $P^*$.

Suppose $c_1$ is also anticomplete to $P^*$. By Lemma \ref{lemma:9}, either $p_k$ and $q_j$ have a common neighbor in $H$, or their neighbors in $H$ form an edge. Since $p_k$ has a neighbor in $H_L^*$ and $q_j$ has a neighbor in $H_R^*$, it follows that the neighbors of $p_k$ and $q_j$ in $H$ do not form an edge. Hence, we may assume that $p_k$ and $q_j$ are both adjacent to $c_1$. If $p_k$ and $q_j$ both have neighbors in $H \setminus \{c_1\}$ other than $\ell_1$ and $r_1$, respectively, then $G$ contains a theta between $p_k$ and $q_j$ through $P$, $c_1$, and $H \setminus \{\ell_1, c_1, r_1\}$, a contradiction. Suppose $N_H(q_j) = \{c_1, r_1\}$ and $p_k$ has a neighbor in $H_L$ other than $\ell_1$. Let $s$ be the neighbor of $p_k$ in $H_L$ closest to $c_2$. Then, $G$ contains a pyramid from $p_k$ to $q_jc_1r_1$ through $p_k \dd P \dd q_j$, $p_k \dd c_1$, and $p_k \dd s \dd H \setminus \{c_1\} \dd r_1$, a contradiction. By definition, $p_k$ and $q_j$ have neighbors in $H_L^*$ and $H_R^*$, respectively, and so $N_H(p_k) = \{\ell_1, c_1\}$ and $N_H(q_j) = \{c_1, r_1\}$, and outcome (i) holds. 

Next, suppose $c_1$ has neighbors in $P^*$. Let $r$ be the closest neighbor of $c_1$ to $p_k$ in $P^*$. By Lemma \ref{lemma:9} applied to the path $p_k \dd P \dd r$, either $p_k$ and $r$ have a common neighbor in $H$, or $p_k$ and $r$ are pendants of $H$ with adjacent neighbors in $H$. Since $N_H(r) = \{c_1\}$, either $p_k$ is adjacent to $c_1$ or $N_H(p_k) = \{\ell_1\}$. By symmetry, either $q_j$ is adjacent to $c_1$ or $N_H(q_j) = \{r_1\}$, and outcome (ii) holds.
\end{proof}

Let $H$ be an $F$-hole and let $c_3 \in C \setminus \{c_1, c_2\}$ be $F$-light. A \emph{$c_3$-butterfly} is a path $P = p_k \dd \hdots \dd p_1 \dd c_3 \dd$ $q_1 \dd \hdots \dd q_j$, where $c_3 \dd p_1 \dd \hdots \dd p_k$ is a shortest path from $c_3$ to $H_L^*$ through $L$ and $c_3 \dd q_1 \dd \hdots \dd q_j$ is a shortest path from $c_3$ to $H_R^*$ through $R$ (possibly $p_k = c_3$ or $c_3 = q_j$). We call the path $c_3 \dd p_1 \dd \hdots \dd p_k$ the \emph{left wing} of $P$, and the path $c_3 \dd q_1 \dd \hdots \dd q_j$ the \emph{right wing} of $P$. We say that $c_3$ is a \emph{central vertex} of $P$ if $c_3 \neq p_k, p_{k-1}, q_j, q_{j-1}$. 

The following results deal with the structure of $c_3$-butterflies. 

\begin{lemma}
Suppose $F$ is a $(C, c_1, c_2)$-frame, $H$ is an $F$-hole, and $c_3 \in C \setminus \{c_1, c_2\}$ is $F$-light. Suppose further that if $C$ is a rich separator, then $F$ is long, and if $C$ is a poor separator, then $\dist(c_1, c_2)$ is maximum over all non-adjacent pairs in $C$. Let $P$ be a $c_3$-butterfly and assume $c_2$ is anticomplete to $P^*$. Suppose that $c_3$ is a central vertex of $P$. Then, $(c_3, c_2)$ is a long pair of $C$. In particular, $C$ is a rich separator.
\label{lemma:c3c2_long_pair}
\end{lemma}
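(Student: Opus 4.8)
The plan is to prove the stronger statement that $\dist_L(c_3,c_2)\ge 4$. Every hypothesis of the lemma is invariant under exchanging the two full components $L$ and $R$: this swaps the two wings of $P$, exchanges $H_L$ with $H_R$, and replaces $F$ by the (equally valid) $(C,c_1,c_2)$-frame $(c_1,c_2,r_1',r_1,\ell_1,\ell_1',r_2',r_2,\ell_2,\ell_2')$ of the same hole, all while preserving that $c_3$ is $F$-light, that $c_2$ is anticomplete to $P^*$, and the rich/poor hypothesis (the notions of $(c_1,c_2)$-heaviness, of a long pair, and of richness depend only on $H$, $c_1$, $c_2$, $C$, not on which side is named $L$). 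Hence $\dist_L(c_3,c_2)\ge 4$ also yields $\dist_R(c_3,c_2)\ge 4$, so $\dist(c_3,c_2)\ge 4$, i.e. $(c_3,c_2)$ is a long pair and $C$ is rich by definition. So I would assume for a contradiction that $\dist_L(c_3,c_2)\le 3$, fix a shortest path $M=c_3\dd m_1\dd\hdots\dd m_t\dd c_2$ through $L$ with $t\le 2$ and $m_1,\dots,m_t\in L$ (and $t=0$ exactly when $c_3c_2\in E(G)$), and note that since $c_3$ is central both wings $c_3\dd p_1\dd\hdots\dd p_k$ and $c_3\dd q_1\dd\hdots\dd q_j$ have $k,j\ge 2$, so $P$ has length at least $4$; then Lemma~\ref{lemma:butterfly_end_shapes} applied to $P$ puts us in case (i) or case (ii).

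The heart of the argument is case (i): $c_1$ and $c_2$ are anticomplete to all of $P$ and, up to the symmetry between $c_1$ and $c_2$, either $N_H(p_k)=\{\ell_1,c_1\}$, $N_H(q_j)=\{c_1,r_1\}$, or $N_H(p_k)=\{\ell_2,c_2\}$, $N_H(q_j)=\{c_2,r_2\}$. In the first sub-case $\{c_1,q_j,r_1\}$ is a triangle, and I would exhibit a pyramid from the apex $c_3$ to this triangle through $P_1=c_3\dd p_1\dd\hdots\dd p_k\dd\ell_1\dd c_1$, $P_2=c_3\dd q_1\dd\hdots\dd q_j$, and $P_3=c_3\dd m_1\dd\hdots\dd m_t\dd c_2\dd r_2\dd\hdots\dd r_1$ (extending $M$ along $H_R$ back to $r_1$). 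In the second sub-case $p_k$ and $q_j$ are both adjacent to $c_2$, and instead I would exhibit a theta between $c_3$ and $c_2$ through $M$, the path $c_3\dd p_1\dd\hdots\dd p_k\dd c_2$, and the path $c_3\dd q_1\dd\hdots\dd q_j\dd c_2$. In each case the verification is routine bookkeeping: $P_1\setminus c_3\subseteq L\cup H_L^*$, $P_2\setminus c_3\subseteq R$ and the third path lies in $L\cup\{c_2\}\cup H_R^*$; $L$ is anticomplete to $R$; $c_3$ is anticomplete to $H_L^*\cup H_R^*$ (it is the first vertex of each wing); $N_H(c_1)=\{\ell_1,r_1\}$ and $N_H(c_2)=\{\ell_2,r_2\}$; and $c_1,c_2$ are anticomplete to $P$. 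Granting these facts, the only ways the construction can fail are (a) $H_L$ or $H_R$ has length at most $2$, so frame vertices collide, (b) $M$ shares a vertex with the left wing or meets $H_L^*$, (c) some edge $\ell_1m_i$ exists, or (d) $c_1c_3$ (resp.\ $c_2c_3$) is an edge.

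Each degeneracy is dispatched separately. For (a): in the rich case $F$ is long, so $\dist_L(c_1,c_2),\dist_R(c_1,c_2)\ge 4$, hence $|H_L|,|H_R|\ge 4$, excluding it; in the poor case $\dist(c_1,c_2)$ is maximal among non-adjacent pairs of $C$, so once one has reached $\dist(c_3,c_2)\ge 4>\dist(c_1,c_2)$ this maximality is contradicted — that is, under the lemma's hypotheses $C$ cannot be poor at all, which is precisely the assertion that $C$ is rich. For (b) and (c): by minimality of $M$ and of the left wing one extracts from $M$ a second path from $c_3$ into $H_L^*$, internally disjoint from $W_L$ and no longer than $M$, forcing $k=2$; combined with $H_L$, $W_L$ and $c_1$ this yields a (shorter) theta, prism, or pyramid, or else, re‑attached to $H$, it makes $c_3$ a $(c_1,c_2)$-heavy vertex of an $F$-hole, contradicting that $c_3$ is $F$-light. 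For (d): dropping the edge $c_1c_3$ or $c_2c_3$ shortens one of the three paths by one edge but leaves at least two of length $\ge 2$, so the pyramid/theta survives (or collapses to a prism, still forbidden); when the triangle $\{c_1,q_j,r_1\}$ is blocked by a chord one instead bases the pyramid at another available triangle among $\{c_1,p_k,\ell_1\}$ and $\{c_1,q_j,r_1\}$. Finally, case (ii) of Lemma~\ref{lemma:butterfly_end_shapes} I would reduce to case (i): since $c_1$ has a neighbour in $P^*$, replacing the portion of $P$ strictly between the extreme neighbours of $c_1$ on $P^*$ by the vertex $c_1$ produces a shorter butterfly-type path through which the case-(i) analysis applies, or one reads off directly a theta between $c_1$ and $c_2$ through $H_L$, $H_R$, and a subpath of $P$.

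The main obstacle I anticipate is exactly this last work: enumerating all the ways $M$ can interfere with the left wing and with $H_L$, controlling the short-hole collisions of frame vertices, and checking in every branch that one genuinely lands on a theta, pyramid, prism, or turtle (or on a heavy vertex contradicting $F$-lightness of $c_3$, or on a pair of $C$ contradicting the maximality of $\dist(c_1,c_2)$). The two clean constructions above — the pyramid with apex $c_3$ over $\{c_1,q_j,r_1\}$ and the theta between $c_3$ and $c_2$ — should be the generic core; organising the degenerate cases so that the rich hypothesis ($F$ long) kills the hole-length collisions and the poor hypothesis (maximality of $\dist(c_1,c_2)$) delivers the final contradiction is the delicate part.
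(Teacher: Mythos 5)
Your overall strategy---reduce to $\dist_L(c_3,c_2)\ge 4$ by the $L$/$R$ symmetry, suppose there is a short path $M=c_3\dd m_1\dd\hdots\dd m_t\dd c_2$ through $L$, and play it against the butterfly using Lemma~\ref{lemma:butterfly_end_shapes} and Lemma~\ref{lemma:9}---is the paper's strategy. But the two ``clean generic constructions'' you build the argument around are not correct as written, and the degeneracies you defer are where essentially all of the content of the lemma lives. First, $P_1=c_3\dd p_1\dd\hdots\dd p_k\dd\ell_1\dd c_1$ is not an induced path in precisely the sub-case where you invoke it: $N_H(p_k)=\{\ell_1,c_1\}$ makes $p_k\ell_1c_1$ a triangle, so $p_kc_1$ is a chord of $P_1$. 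Second, the reduction of case~(ii) fails: there is no ``theta between $c_1$ and $c_2$ through $H_L$, $H_R$, and a subpath of $P$,'' because $c_2$ is anticomplete to $P^*$ by hypothesis, so any third $c_1c_2$-path must exit through $M$---exactly the object whose attachments you have not controlled; and replacing the middle of $P$ by $c_1$ does not produce a $c_3$-butterfly (the wings are no longer shortest paths through $L$ and $R$, and $c_1\in C$), so neither Lemma~\ref{lemma:butterfly_end_shapes} nor your case-(i) analysis applies to the modified path.

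The deferred degeneracies are the proof. Since $M^*$ and the left wing both lie in $L$, they can meet or be joined by edges, and each $m_i$ can attach to $H_L^*$, to $c_1$, or to nothing; the paper's argument consists almost entirely of pinning this down: centrality of $c_3$ gives $k,j\ge 2$, hence $m_1\notin H$ and $m_1$ has no neighbour in $H_L^*$, so $N_H(m_1)\subseteq\{c_1,c_2\}$, which is then whittled to a single vertex by a theta; in the length-three case, successive applications of Lemma~\ref{lemma:9} to $m_2\dd m_1$, to $m_2\dd m_1\dd c_3$, and to $m_2\dd m_1\dd c_3\dd\hdots\dd q_j$ show that $m_1$ is anticomplete to $H$, that $c_1$ is anticomplete to $\{c_3,q_1,\dots,q_j\}$, and finally that $r_1c_2\in E(G)$, contradicting $\dist(c_1,c_2)\ge 3$. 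Your sketches do not accomplish this: ``forcing $k=2$'' leads nowhere you specify, and $M$ reaching $H_L^*$ cannot make $c_3$ $(c_1,c_2)$-heavy, since a central $c_3$ has no neighbour in $H_L^*\cup H_R^*$ at all, so that escape route is vacuous. Finally, your use of the poor-case maximality only ``at the end'' is circular as phrased (you cannot cite $\dist(c_3,c_2)\ge 4$ while proving it), and it misses that the maximality is needed \emph{inside} the degenerate-case analysis as the working bound $\dist(c_1,c_2)\ge\dist(c_3,c_2)$, which is what excludes edges such as $c_1m_t$ and $r_1c_2$. So the proposal assembles the right ingredients but does not amount to a proof.
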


\begin{proof}
Assume for a contradiction that $(c_3, c_2)$ is not a long pair of $C$. Then, there exists a path from $c_3$ to $c_2$ of length less than or equal to three through $L$ or through $R$. First, assume that there exists a path of length two from $c_3$ to $c_2$, say $c_3 \dd x \dd c_2$, and without loss of generality let $x \in L$. Because $P$ is a butterfly and $c_3$ is a central vertex of $P$, neither $c_3$ nor $c_3 \dd x$ is the left wing of a $c_3$-butterfly, so $x \notin H$ and $x$ is anticomplete to $H_L^*$. It follows that $N_H(x) \subseteq \{c_1, c_2\}$. If $N_H(x) = \{c_1, c_2\}$, then $G$ contains a theta between $c_1$ and $c_2$ through $H_L$, $H_R$, and $x$, so $N_H(x) = \{c_2\}$. If $c_1$ has neighbors in $P^*$, then $G$ contains a theta between $c_1$ and $c_2$ through $H_L$, $H_R$, and $P^* \cup \{x\}$, so $c_1$ is anticomplete to $P^*$. It follows from Lemma \ref{lemma:butterfly_end_shapes} that $N_H(p_k) = \{\ell_1, c_1\}$. Now, $G$ contains a pyramid from $c_2$ to $p_k\ell_1c_1$ through $c_2 \dd x \dd c_3 \dd P \dd p_k$, $c_2 \dd H_L \dd \ell_1$, and $c_2 \dd H_R \dd c_1$, a contradiction. Therefore, there is no path of length two from $c_3$ to $c_2$. 

Next, let $c_3 \dd x \dd y \dd c_2$ be a path of length three from $c_3$ to $c_2$, and without loss of generality let $x, y \in L$. Since $\dist(c_3, c_2) = 3$, it follows that $\dist(c_1, c_2) \geq 3$. In particular, $c_1$ is not adjacent to $y$. Because $c_3$ is a central vertex of $P$, it follows that neither $c_3$ nor $c_3 \dd x$ is the left wing of a $c_3$-butterfly. Therefore, $x, y \notin H$ and $N_H(x) \subseteq \{c_1\}$. Suppose $x$ is adjacent to $c_1$. Then, $x$ and $y$ are strictly nested with respect to $H$. By Lemma \ref{lemma:9}, $x$ and $y$ are pendants of $H$ with adjacent neighbors in $H$, so $c_1$ is adjacent to $c_2$, a contradiction. Hence, $x$ is anticomplete to $H$.

Suppose first that $c_1$ is adjacent to $c_3$. Consider the path $y - x - c_3$. By Lemma \ref{lemma:9}, either $y$ and $c_3$ have a common neighbor in $H$, or $y$ and $c_3$ are pendants of $H$ with adjacent neighbors in $H$. Since $N_H(c_3) = \{c_1\}$ and $y$ is not adjacent to $c_1$, it follows that $y$ is a pendant with $N_H(y) = \{\ell_1\}$. But $c_2 \in N_H(y)$, a contradiction. This shows that $c_1$ is not adjacent to $c_3$. Next, suppose that $c_1$ has a neighbor in $\{q_1, q_2, \dots, q_j\}$. Let $t$ be minimum such that $c_1$ is adjacent to $q_t$. Let $Q$ be a path from $y$ to $q_t$ with $Q^* \subseteq \{x, c_3, q_1, \dots, q_{t-1}\}$. By Lemma \ref{lemma:9}, either $y$ and $q_t$ have a common neighbor in $H$, or $y$ and $q_t$ are pendants of $H$ with adjacent neighbors in $H$. Suppose $t \neq j$, so $N_H(q_t) = \{c_1\}$. Since $y$ is not adjacent to $c_1$, it follows that $y$ is a pendant of $H$ and $N_H(y) = \{\ell_1\}$. But $c_2 \in N_H(y)$, a contradiction. Therefore, $t = j$. Since $q_j$ is adjacent to $c_1$ and $q_j$ has a neighbor in $H_R^*$, $q_j$ is not a pendant of $H$. Therefore, $q_j$ and $y$ have a common neighbor in $H$. Since $y \in L$ and $q_j \in R$, the common neighbor of $y$ and $q_j$ is $c_2$. Then, $q_j$ is a common neighbor of $c_1$ and $c_2$, contradicting that $\dist(c_1, c_2) \geq 3$. This proves that $c_1$ is anticomplete to $\{c_3, q_1, \dots, q_j\}$. 

Since $q_j$ is not adjacent to $c_1$, by Lemma \ref{lemma:butterfly_end_shapes}, $N_H(q_j) = \{r_1\}$. Now, consider the path $Q = y \dd x \dd c_3 \dd \hdots \dd q_j$. By Lemma \ref{lemma:9}, either $y$ and $q_j$ are pendants of $H$ with adjacent neighbors in $H$, or $y$ and $q_j$ have a common neighbor in $H$. Since $y \in L$ and $N_H(q_j) = \{r_1\}$, $y$ and $q_j$ do not have a common neighbor in $H$. Therefore, $r_1$ is adjacent to $c_2$, contradicting that $\dist(c_1, c_2) \geq 3$.
\end{proof}

By Lemma \ref{lemma:butterfly_end_shapes} and Lemma \ref{lemma:c3c2_long_pair}, if $C$ is a poor separator and $\dist(c_1, c_2)$ is maximum over all non-adjacent pairs in $C$, then $c_3$ is not a central vertex of $P$. The following two lemmas prove a similar result for rich separators.

\begin{lemma} 
Suppose $C$ is a rich separator, $(c_1, c_2)$ is a long pair of $C$, $F$ is a $(C, c_1, c_2)$-frame, $H$ is an $F$-hole, $c_3 \in C \setminus \{c_1, c_2\}$ is $F$-light, and $P = p_k \dd \hdots \dd c_3 \dd \hdots \dd q_j$ is a $c_3$-butterfly with $c_2$ anticomplete to $P^*$. Assume that $c_3$ is a central vertex of $P$ and let $w$ be an $F$-heavy vertex. Let $s$ be the neighbor of $p_k$ in $H_L$ closest to $c_2$, and let $t$ be the neighbor of $q_j$ in $H_R$ closest to $c_2$. Let $S$ be the path from $s$ to $t$ in $H \setminus \{c_1\}$, and consider the $(C, c_3, c_2)$-hole given by $J = P \cup S$. Then, $w$ is a $(c_3, c_2)$-heavy vertex with respect to $J$.
\label{lemma:w_stays_heavy}
\end{lemma}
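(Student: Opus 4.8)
The plan is to reduce to finding, inside the hole $J$, two suitably placed neighbours of $w$, and then to invoke Lemma \ref{lemma:heavy_nbrs_interior} applied to $J$. I would start by recording the basic facts: since $w$ is $F$-heavy, Lemma \ref{lemma:potential_depends_on_frame} gives that $w$ is $(c_1,c_2)$-heavy with respect to $H$ itself, so $w$ is major for $H$, the vertices $c_1$ and $c_2$ are distant in $H$ with respect to $w$, and $w$ has a neighbour in each of $H_L^*$ and $H_R^*$ (hence $w\in C$). I would also note that $J$ meets $H$ exactly in the subpath $S$, because the butterfly vertices $p_1 \dd \hdots \dd p_k \dd c_3 \dd q_1 \dd \hdots \dd q_j$ lie in $L \cup \{c_3\} \cup R$ and avoid $H$; and that through $L$ and $R$ we have $J_L = c_3 \dd p_1 \dd \hdots \dd p_k \dd s \dd \hdots \dd \ell_2 \dd c_2$ and $J_R = c_3 \dd q_1 \dd \hdots \dd q_j \dd t \dd \hdots \dd r_2 \dd c_2$, with $p_1$ (resp.\ $q_1$) the neighbour of $c_3$ and $\ell_2$ (resp.\ $r_2$) the neighbour of $c_2$ on those paths.

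With this set up, Lemma \ref{lemma:heavy_nbrs_interior}, applied to $J$ in place of $H$ (its ``$\ell_1,\ell_2$'' being $p_1$ and the neighbour of $c_2$ in $J_L$, and its ``$r_1,r_2$'' being $q_1$ and the neighbour of $c_2$ in $J_R$), reduces the claim to: $w$ has a neighbour in the strict interior of $J_L$ (that is, in $J_L^*$ but distinct from $p_1$ and from the neighbour of $c_2$ in $J_L$), and symmetrically a neighbour in the strict interior of $J_R$. By symmetry it suffices to treat $J_L$. If $w$ has a neighbour in $H_L^*$ that lies on $S$ and is not $\ell_2$, or if $w$ is adjacent to some $p_i$ with $2 \le i \le k$, then such a neighbour is already produced. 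Otherwise every neighbour of $w$ in $H_L$ lies strictly on the $c_1$-side of $s$ (or is one of the excluded frame vertices not on $S$), and then I would exhibit an $(H,w)$-significant path using (part of) the butterfly: the reversed left wing $p_k \dd \hdots \dd p_1 \dd c_3$ when $c_3$ has a neighbour in $H$ (necessarily $c_1$), or else the whole butterfly $p_k \dd \hdots \dd q_j$ or its right wing. Indeed one endpoint has a neighbour $a$ in $H_L$ with $a\notin N_H(w)$, the other endpoint attaches to $H$ at $c_1$ (or at the relevant neighbour of $q_j$), and $a$ and that vertex are distant in $H$ with respect to $w$ exactly because $c_1$ and $c_2$ are. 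Theorem \ref{theorem:star_cutset} then forces $w$ to have a neighbour on that path, i.e.\ on $\{p_2,\dots,p_k\}\cup\{c_3\}\cup\{q_1,\dots,q_j\}$. Finally I would rule out the possibility that the only such neighbours are $p_1$ or $c_3$ (endpoints of $J$) by short forbidden-subgraph arguments combining $w$, $H$, and the butterfly, using Lemma \ref{lemma:butterfly_end_shapes} to pin down $N_H(p_k),N_H(q_j)$ and Lemma \ref{lemma:c3c2_long_pair} (so $\dist(c_3,c_2)\ge 4$).

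The hard part will be the bookkeeping of the degenerate configurations rather than any single inequality: namely when $s$ or $t$ sits close to $c_2$ so that little of $H_L^*$ or $H_R^*$ survives in $J$; when $w$'s only surviving neighbours are frame vertices such as $p_1$ or $\ell_2$, so that Lemma \ref{lemma:heavy_nbrs_interior} does not apply verbatim and one must instead verify by hand that $w$ is major for $J$ and that $c_3,c_2$ are distant in $J$ with respect to $w$ (here using Lemma \ref{lemma:9} to control how $w$ and the butterfly attach to $H$); and the case in which $w$ is a hub for $H$, where Theorem \ref{theorem:star_cutset} needs its side hypothesis, so hubs are handled by a separate argument that directly produces a theta, pyramid, or prism from $w$, $H$, and the butterfly wings. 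I expect most of the length of the proof to live in these case distinctions, each closed by locating one of the four forbidden induced subgraphs.
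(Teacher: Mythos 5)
Your overall architecture matches the paper's: assume for contradiction that $w$ is not $(c_3,c_2)$-heavy with respect to $J$, use Lemma \ref{lemma:potential_depends_on_frame} to get that $w$ is $(c_1,c_2)$-heavy with respect to $H$, locate neighbours of $w$ in $J_L^*$ and $J_R^*$ away from the attachments of $c_3$ and $c_2$, apply Lemma \ref{lemma:heavy_nbrs_interior} to $J$, and close the degenerate cases with forbidden-subgraph arguments. However, the mechanism you propose for producing the required neighbour of $w$ does not work. You apply Theorem \ref{theorem:star_cutset} to the butterfly or one of its wings. First, these paths are usually not $(H,w)$-significant: the attachments of their endpoints to $H$ are confined to $\{\ell_1,c_1,r_1\}$ (by Lemma \ref{lemma:butterfly_end_shapes} and centrality of $c_3$), and this set is a subpath of $H$ of length two, so its vertices generally lie in a common extended neighbourhood of $w$ and are not distant; moreover when $c_3c_1\notin E(G)$ the left wing has an endpoint with no neighbour in $H$ at all, so no valid $b$ exists. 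Second, and more fundamentally, even when such a path is significant, Theorem \ref{theorem:star_cutset} only yields that $w$ has a neighbour \emph{somewhere} on the path -- possibly at $c_3$, at $p_1$, or on the right wing -- which is useless for certifying a neighbour in $J_L^*\setminus\{p_1,\ell_2\}$. The paper's claim (1) avoids both problems by applying Theorem \ref{theorem:star_cutset} to the path $p_k \dd s \dd H_L \dd \ell_2$, which hugs the hole $H$ rather than the butterfly: its endpoints attach to $H$ at $c_1$ and $c_2$, which are distant with respect to $w$ by hypothesis, and every vertex of this path lies in $J_L^*\setminus\{p_1\}$, so the neighbour it produces is automatically in the right place.

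There is a second gap in the endgame. The neighbour produced on $p_k \dd s \dd H_L \dd \ell_2$ may be $\ell_2$ itself, i.e.\ the attachment of $c_2$ on $J_L$, in which case Lemma \ref{lemma:heavy_nbrs_interior} does not apply. You acknowledge this case but propose to ``verify by hand that $w$ is major for $J$ and that $c_3,c_2$ are distant in $J$ with respect to $w$.'' That is not how this case resolves: in the paper it is shown to be \emph{impossible}, via a substantial argument -- one deduces $wc_2\in E(G)$, that $w$ has neighbours in the interior of $c_1 \dd H_L \dd s$, that $p_kc_1\in E(G)$ so $w$ and $p_k$ cross with respect to $H$, and then Lemmas \ref{lemma:7} and \ref{lemma:8} (MNC configurations, the clone case) are used to extract a theta or pyramid. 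None of this machinery appears in your sketch, and your stated fallback (re-verifying heaviness directly) would not by itself yield the contradiction. So while the skeleton is right, the two load-bearing steps -- the correct choice of significant path and the crossing analysis for the $\ell_2$-only case -- are missing.
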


\begin{proof}
By Lemma \ref{lemma:butterfly_end_shapes}, $p_k$ is either adjacent to $c_1$ or $N_H(p_k) = \{\ell_1\}$. Since $(c_1, c_2)$ is a long pair of $C$, it follows that $s \neq \ell_2$. By symmetry, $t \neq r_2$. Assume for a contradiction that $w$ is not $(c_3, c_2)$-heavy with respect to $J$.

\medskip

\noindent \emph{(1) If $p_k$ is adjacent to $c_1$, then $w$ has a neighbor in $p_k \dd s \dd H_L \dd \ell_2$.}

Because $(c_1, c_2)$ is a long pair of $C$, $H$ has length at least eight. By Lemma \ref{lemma:potential_depends_on_frame}, $w$ is $(c_1,c_2)$-heavy with respect to $H$. Suppose $w$ is anticomplete to the path given by $p_k \dd s \dd H_L \dd \ell_2$. If $w$ is not complete to $\{c_1, c_2\}$, then since $c_1$ and $c_2$ are distant in $H$ with respect to $w$, the path $p_k \dd s \dd H_L \dd \ell_2$ is $(H, w)$-significant and $w$ is not a hub for $H$, a contradiction to Theorem \ref{theorem:star_cutset}. So $w$ is complete to $\{c_1, c_2\}$. Let $w'$ be the neighbor of $w$ in $H_L^*$ that is closest to $\ell_2$. Note that $w'$ exists and $w' \neq \ell_1$ since $w$ is $(c_1, c_2)$-heavy with respect to $H$. Also, observe that there is a path $Q$ from $w$ to $p_k$ through $P \cup H_R \setminus \{c_1, r_1\}$: either $w$ has a neighbor in $P \setminus \{p_k\}$, or there is a path $Q' = q_j \dd t \dd H_R \dd w'' \dd w$, where $w''$ is the neighbor of $w$ in $t \dd H_R \dd c_2$ closest to $t$ (possibly $w'' = c_2)$. Let $s'$ be the neighbor of $p_k$ in $w' \dd H_L \dd c_2$ closest to $w'$; note that $s'$ exists since $s$ is in $w' \dd H_L \dd c_2$ (possibly $s' = w'$). Now, $G$ contains a theta between $p_k$ and $w$ through $p_k \dd s' \dd H_L \dd w' \dd w$, $p_k \dd c_1 \dd w$, and $p_k \dd Q \dd w$, a contradiction. This proves (1).

\medskip

\noindent \emph{(2) $w$ has a neighbor in $J_L^* \setminus N_J(c_3)$ and a neighbor in $J_R^* \setminus N_J(c_3)$.}

By symmetry, it suffices to show that $w$ has a neighbor in $J_L^* \setminus N_J(c_3)$. Assume first that $s = \ell_1$. Then, $H_L^* \subseteq J_L^*$. Further, $c_3$ does not have a neighbor in $H_L^*$, otherwise $c_3 = p_k$, contradicting that $c_3$ is a central vertex of $P$. Finally, since $w$ is $(c_1, c_2)$-heavy with respect to $H$, $w$ has a neighbor in $H_L^*$, and we are done. Hence, we may assume that $s \neq \ell_1$. By Lemma \ref{lemma:butterfly_end_shapes}, $p_k$ is adjacent to $c_1$, so by (1), $w$ has a neighbor in $p_k \dd s \dd H_L \dd \ell_2$. Note that $p_k \dd s \dd H_L \dd \ell_2 \subseteq J_L^*$, so $w$ has a neighbor in $J_L^*$. Further, $c_3$ does not have a neighbor in $p_k \dd s \dd H_L \dd \ell_2$, otherwise $c_3$ is adjacent to $p_k$, contradicting that $c_3$ is a central vertex of $P$. Therefore, $w$ has a neighbor in $J_L^* \setminus N_J(c_3)$. This proves (2).

\medskip

By Lemma \ref{lemma:heavy_nbrs_interior}, $w$ does not have a neighbor in both $J_L^* \setminus N_J(\{c_3, c_2\})$ and $J_R^* \setminus N_J(\{c_3, c_2\})$, so by (2) we may assume that $\ell_2$ is the only neighbor of $w$ in $J_L^* \setminus N_J(c_3)$. By (2) $w$ has a neighbor in $J_R^* \setminus N_J(c_3)$ and since $c_3$ and $c_2$ are not distant in $J$ with respect to $w$, it follows that $w$ is also adjacent to $c_2$. Because $c_1$ and $c_2$ are distant in $H$ with respect to $w$, it follows that $s \neq \ell_1$ and $w$ has neighbors in the interior of $c_1 \dd H_L \dd s$. Since $s \neq \ell_1$, by Lemma \ref{lemma:butterfly_end_shapes}, it follows that $p_k$ is adjacent to $c_1$. Thus, $w$ and $p_k$ cross with respect to $H$. There are two cases: either $p_k$ is a clone of $\ell_1$, or $p_k$ is major and by Lemma \ref{lemma:7} $H \cup \{w, p_k\}$ is MNC configuration (4), (5), (6), (7), or (8). Suppose the first case holds, so $p_k$ is a clone of $\ell_1$. Since $p_k$ is a clone of $\ell_1$, it holds that $s = \ell_1'$. By Lemma~\ref{lemma:8}, $H$ contains a $\{w, p_k\}$-complete edge, so $w$ is adjacent to $\ell_1$ and $c_1$ (note that $w$ is not adjacent to $s = \ell_1'$ since $\ell_2$ is the only neighbor of $w$ in $J_L^* \setminus N_J(c_3)$). Then, $N(w) \cap H_L = \{c_1, \ell_1, \ell_2, c_2\}$, so $c_1$ and $c_2$ are not distant in $H$ with respect to $w$, a contradiction. Therefore, $p_k$ is major and $H \cup \{w, p_k\}$ is MNC configuration (4), (5), (6), (7), or (8). It follows that there is a $\{w, p_k\}$-complete edge in $c_1 \dd H_L \dd s$. Let $e = v_1v_2$ be a $\{w, p_k\}$-complete edge in $c_1 \dd H_L \dd s$ such that $v_2$ is between $v_1$ and $s$ in $H_L$. Suppose that $w$ is not a cap with respect to $J$ and let $u$ be the neighbor of $w$ in the $p_kc_2$-subpath of $J \setminus \ell_2$ that is closest to $p_k$. Then $G$ contains a theta between $p_k$ and $w$ through $p_k \dd v_1 \dd w$, $p_k \dd s \dd H_L \dd \ell_2 \dd w$, and $p_k \dd J \setminus \{\ell_2\} \dd u \dd w$. Hence, $N_J(u) = \{\ell_2, c_2\}$. Then, $G$ contains a pyramid from $p_k$ to $w\ell_2 c_2$ through $p_k \dd v_1 \dd w$, $p_k \dd s \dd H_L \dd \ell_2$, and $p_k \dd P \dd q_j \dd t \dd H_R \dd c_2$, a contradiction.
\end{proof}

\begin{lemma}
Suppose $C$ is a rich separator, $F$ is a $(C, c_1, c_2)$-frame with maximum potential over all long frames, and $H$ is an $F$-hole. Let $c_3 \in C \setminus \{c_1, c_2\}$ be $F$-light, and let $P = p_k \dd \hdots \dd c_3 \dd \hdots \dd q_j$ be a $c_3$-butterfly. Then, $c_3$ is not a central vertex of $P$.
\label{lemma:rich_not_int}
\end{lemma}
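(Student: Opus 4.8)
\medskip
\noindent\textbf{Proof proposal.}
The plan is to contradict the maximality of the potential of $F$: starting from a butterfly in which $c_3$ is central, I would build a new long frame whose potential is strictly larger than that of $F$.

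Suppose for contradiction that $c_3$ is a central vertex of $P$. Then $k,j\ge 2$, so $P$ has length at least two and Lemma~\ref{lemma:butterfly_end_shapes} applies; up to the symmetry between $c_1$ and $c_2$ (which only relabels $F$ and preserves the properties of being long and of being $F$-heavy) we may assume that $c_2$ is anticomplete to $P^*$ and that $p_k,q_j$ have the shapes of outcome (i) or (ii) of that lemma, so in particular $p_k$ is adjacent to $c_1$ or is a pendant of $H$ with neighbour $\ell_1$, and symmetrically $q_j$ with $r_1$. Since $C$ is rich and $F$ is long, Lemma~\ref{lemma:c3c2_long_pair} gives that $(c_3,c_2)$ is a long pair of $C$. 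Now let $s$ (resp.\ $t$) be the neighbour of $p_k$ (resp.\ $q_j$) in $H_L$ (resp.\ $H_R$) closest to $c_2$, let $S$ be the $s$-to-$t$ subpath of $H\setminus\{c_1\}$, and put $J=P\cup S$; as in Lemma~\ref{lemma:w_stays_heavy}, $J$ is a $(C,c_3,c_2)$-hole, and since $(c_3,c_2)$ is a long pair, its frame $F(J)$ is a long frame.

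By Lemma~\ref{lemma:w_stays_heavy}, every $F$-heavy vertex is $(c_3,c_2)$-heavy with respect to $J$ and hence, $J$ being an $F(J)$-hole, is $F(J)$-heavy; so the set of $F(J)$-heavy vertices contains every $F$-heavy vertex. To finish I would exhibit an $F(J)$-heavy vertex that is not $F$-heavy, namely $c_1$. The ``not $F$-heavy'' part is immediate: $c_1\in V(H)$ cannot be a major vertex of the $F$-hole $H$, so $c_1$ is not $(c_1,c_2)$-heavy with respect to $H$, hence not $F$-heavy by Lemma~\ref{lemma:potential_depends_on_frame}; in particular $c_1$ is distinct from every $F$-heavy vertex. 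Also $c_1\notin V(J)$, because $V(P)\subseteq L\cup R\cup\{c_3\}$ and $c_1\notin S\subseteq H\setminus\{c_1\}$. Thus once $c_1$ is shown to be $(c_3,c_2)$-heavy with respect to $J$, the long frame $F(J)$ has potential at least that of $F$ plus one, contradicting the maximality of the potential of $F$ over all long frames.

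It remains to prove that $c_1$ is $(c_3,c_2)$-heavy with respect to $J$, and this is where the real work lies. I would apply Lemma~\ref{lemma:heavy_nbrs_interior} to the hole $J$ and the vertex $c_1$: it suffices to find a neighbour of $c_1$ in $J_L^*$ different from the $J_L$-neighbours $p_1$ of $c_3$ and $\ell_2$ of $c_2$, and symmetrically on $J_R$. The key observation, and the main use of the long-pair hypothesis, is that $p_k$ is not adjacent to $c_2$: otherwise, using $p_k$ adjacent to $c_1$ (the relevant case of Lemma~\ref{lemma:butterfly_end_shapes}), $c_1\dd p_k\dd c_2$ would be a path of length two from $c_1$ to $c_2$ through $L$, contradicting $\dist(c_1,c_2)\ge 4$. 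Hence $s\in H_L^*$, the $J_L$-neighbour of $c_2$ is exactly $\ell_2$, and $\dist(c_1,c_2)\ge 4$ also gives $\ell_1\ne\ell_2$ and $p_k\ne\ell_2$, while $\ell_1\ne p_1$ and $p_k\ne p_1$ since $c_3$ is anticomplete to $H_L^*$ and $k\ge 2$. By Lemma~\ref{lemma:butterfly_end_shapes}, $c_1$ is adjacent to $p_k$ or to $\ell_1=s$, and both of these lie in $J_L^*\setminus\{p_1,\ell_2\}$; the mirror statement holds on $J_R$ with $q_j$, $r_1$, $t$. Then Lemma~\ref{lemma:heavy_nbrs_interior} gives that $c_1$ is $(c_3,c_2)$-heavy with respect to $J$, completing the argument. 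I expect the main obstacle to be precisely this last verification: one must run through all shapes of the butterfly ends permitted by Lemma~\ref{lemma:butterfly_end_shapes} (cap, pendant, clone, or a more general major vertex of $H$, and whether the neighbour of $p_k$ nearest to $c_2$ is $\ell_1$ itself or lies deeper in $H_L^*$) and check in each case that $c_1$ still has a neighbour strictly inside $J_L$ away from $p_1$ and $\ell_2$, with the degenerate collapses (short wings, $s=\ell_2$, $p_k\in V(H)$) controlled throughout by $\dist(c_1,c_2)\ge 4$ and by the wings being shortest paths.
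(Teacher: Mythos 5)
Your proposal is correct and follows essentially the same route as the paper's proof: assume $c_3$ is central, form the new hole $J=P\cup S$, invoke Lemma~\ref{lemma:w_stays_heavy} to keep all $F$-heavy vertices heavy, verify via Lemma~\ref{lemma:heavy_nbrs_interior} that $c_1$ becomes $(c_3,c_2)$-heavy, and contradict maximality of potential using Lemma~\ref{lemma:c3c2_long_pair}. Your explicit checks that $c_1$ is not $F$-heavy and lies outside $V(J)$, and that $p_k$ is anticomplete to $\{c_2,\ell_2\}$ by the long-pair condition, are exactly the (partly implicit) details the paper relies on.
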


\begin{proof}
Suppose that $c_3$ is a central vertex of $P$. By Lemma \ref{lemma:butterfly_end_shapes}, we may assume that $c_2$ is anticomplete to $P^*$. Furthermore, by Lemma \ref{lemma:butterfly_end_shapes}, $p_k$ is adjacent to $c_1$ or $N_H(p_k) = \{\ell_1\}$. Since $(c_1, c_2)$ is a long pair of $C$, it follows that $p_k$ is anticomplete to $\{c_2, \ell_2\}$. By symmetry, $q_j$ is anticomplete to $\{c_2, r_2\}$. Let $r$ be the neighbor of $p_k$ in $H_L$ closest to $c_2$, and let $s$ be the neighbor of $q_j$ in $H_R$ closest to $c_2$. Let $S$ be the path in $H$ from $r$ to $s$ through $c_2$. Let $J$ be the $(C,c_3,c_2)$-hole given by $P \cup S$, and let $F'$ be the $(C,c_3,c_2)$-frame of $J$. By Lemma \ref{lemma:w_stays_heavy} it follows that every $F$-heavy vertex is $(c_2,c_3)$-heavy with respect to $J$, and therefore is $F'$-heavy. Now, consider $c_1$ with respect to the hole $J$. Either $c_1$ is adjacent to $p_k$, or $\ell_1$ is in $J$ and $c_1$ is adjacent to $\ell_1$. Similarly, either $c_1$ is adjacent to $q_j$, or $r_1$ is in $J$ and $c_1$ is adjacent to $r_1$. Then, $c_1$ has a neighbor in $J_L^* \setminus N_J(\{c_3, c_2\})$ and a neighbor in $J_R^* \setminus N_J(\{c_3, c_2\})$, so by Lemma \ref{lemma:heavy_nbrs_interior}, $c_1$ is a $(c_3, c_2)$-heavy vertex of $J$. Finally, it follows from Lemma \ref{lemma:c3c2_long_pair} that $(c_3, c_2)$ is a long pair of $C$. Then, $F'$ is a long $(C, c_3, c_2)$-frame with higher potential than $F$, a contradiction. 
\end{proof}

We call a $(C, c_1, c_2)$-frame $F$ \emph{optimal} if one of the following holds:
\begin{enumerate}[(i)]
\itemsep-0.2em
\item $C$ is a rich separator and $F$ has maximum potential over all long frames of $C$
\item $C$ is a poor separator and $\dist(c_1, c_2)$ is maximum over all non-adjacent pairs of vertices in $C$.
\end{enumerate}

The following theorem combines the results of Lemma \ref{lemma:butterfly_end_shapes}, Lemma \ref{lemma:c3c2_long_pair}, and Lemma \ref{lemma:rich_not_int}. 

\begin{theorem}
Let $F$ be an optimal $(C, c_1, c_2)$-frame, $H$ be an $F$-hole, $c_3 \in C \setminus \{c_1, c_2\}$ be $F$-light, and $P = p_k \dd \hdots \dd c_3 \dd \hdots \dd q_j$ be a $c_3$-butterfly. Then, $c_3$ is not a central vertex of $P$.
\label{theorem:4_final_thm}
\end{theorem}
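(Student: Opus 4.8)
The plan is to recognize that this statement is a direct consequence of results already established in this section, and to prove it by a case split on the definition of \emph{optimal}. By definition, an optimal $(C, c_1, c_2)$-frame $F$ falls into exactly one of two cases: either (i) $C$ is rich and $F$ has maximum potential over all long frames of $C$, or (ii) $C$ is poor and $\dist(c_1, c_2)$ is maximum over all non-adjacent pairs of vertices in $C$. I would handle these two cases separately. In both, I assume for contradiction that $c_3$ is a central vertex of $P$, so that $c_3 \notin \{p_k, p_{k-1}, q_j, q_{j-1}\}$; in particular the left wing of $P$ has length at least two, hence $P$ has length at least two, which is the hypothesis needed to apply Lemma~\ref{lemma:butterfly_end_shapes}.

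In case (i) there is essentially nothing to do: the hypotheses of Lemma~\ref{lemma:rich_not_int} hold verbatim --- $C$ is a rich separator, $F$ is a $(C, c_1, c_2)$-frame of maximum potential over all long frames, $H$ is an $F$-hole, $c_3 \in C \setminus \{c_1, c_2\}$ is $F$-light, and $P$ is a $c_3$-butterfly --- so that lemma immediately yields that $c_3$ is not a central vertex of $P$, a contradiction. In case (ii) I would first apply Lemma~\ref{lemma:butterfly_end_shapes} to $P$: both of its possible outcomes share the feature that $c_2$ is anticomplete to $P^*$, after possibly swapping the names of $c_1$ and $c_2$. This swap is harmless, since the properties of $F$ being an optimal $(C, c_1, c_2)$-frame, of $c_3$ being $F$-light, and of $P$ being a $c_3$-butterfly all depend only on the unordered pair $\{c_1, c_2\}$: being $(c_1, c_2)$-heavy is symmetric in $c_1$ and $c_2$, and the partition of $H$ into $H_L$ and $H_R$, hence the two wings of a butterfly, is unaffected by the swap. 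Having arranged that $c_2$ is anticomplete to $P^*$, I would then invoke Lemma~\ref{lemma:c3c2_long_pair}, whose hypotheses now all hold: $C$ is poor (so the ``$F$ long'' requirement is vacuous, while $\dist(c_1, c_2)$ is maximum over non-adjacent pairs), $P$ is a $c_3$-butterfly with $c_2$ anticomplete to $P^*$, and $c_3$ is a central vertex of $P$. Its conclusion is that $C$ is a rich separator, contradicting that $C$ is poor. In both cases we reach a contradiction, so $c_3$ is not a central vertex of $P$.

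I do not expect a genuine obstacle in this step, as the substantive work is front-loaded into Lemma~\ref{lemma:c3c2_long_pair} and Lemma~\ref{lemma:rich_not_int} (the latter resting on the argument of Lemma~\ref{lemma:w_stays_heavy} that $F$-heavy vertices remain heavy with respect to the new hole). The only points requiring a little care are: (a) verifying that $P$ has length at least two so that Lemma~\ref{lemma:butterfly_end_shapes} is applicable, which is immediate from $c_3$ being central; (b) checking that passing to the outcome ``$c_2$ anticomplete to $P^*$'' via the symmetry between $c_1$ and $c_2$ is compatible with all the remaining hypotheses, as explained above; and (c) matching the two clauses of the definition of ``optimal'' to the respective hypotheses of Lemma~\ref{lemma:rich_not_int} and Lemma~\ref{lemma:c3c2_long_pair} --- in particular, observing that clause (ii) already presupposes that $(c_1, c_2)$ is a non-adjacent pair of $C$, which is exactly what makes ``$\dist(c_1, c_2)$ maximum over all non-adjacent pairs'' the right condition to hand to Lemma~\ref{lemma:c3c2_long_pair}.
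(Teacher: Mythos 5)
Your proof is correct and is essentially identical to the paper's: the paper explicitly presents Theorem~\ref{theorem:4_final_thm} as the combination of Lemma~\ref{lemma:butterfly_end_shapes}, Lemma~\ref{lemma:c3c2_long_pair}, and Lemma~\ref{lemma:rich_not_int}, with the rich case handled by Lemma~\ref{lemma:rich_not_int} and the poor case by Lemma~\ref{lemma:butterfly_end_shapes} followed by Lemma~\ref{lemma:c3c2_long_pair}. Your additional care about the length of $P$ and the $c_1$/$c_2$ symmetry is sound and matches the paper's intent.
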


\section{Constructing proper separators} \label{sec:constructing_separators}

In this section, we show how to use the structure results from previous sections to prove the main result of the paper. Our goal is to reconstruct proper separators $C$ given only an optimal frame $F$ of $C$, and two 4-tuples $M_1(C), M_2(C)$ of vertices in $C$. We first show that we can construct an $F$-hole $H$, and then show that we can construct three sets $C_1, C_2, C_3$ such that $C = C_1 \cup C_2 \cup C_3$.

We begin with a key observation about the structure of graphs in $\C$.

\begin{lemma}
If $G \in \C$, then $G$ does not contain a $3$-creature.
\label{lemma:three_paths_lemma}
\end{lemma}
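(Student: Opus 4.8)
The plan is to argue by contradiction: I assume that $G\in\C$ contains a $3$-creature, with $A$, $B$, $x_1,x_2,x_3$, $y_1,y_2,y_3$ as in the definition, and I produce a theta, pyramid, prism, or turtle in $G$. The first step is to extract a convenient attachment structure for $x_1,x_2,x_3$ inside $A$ and, symmetrically, for $y_1,y_2,y_3$ inside $B$. Since $G[A]$ is connected and each $x_i$ has a neighbor in $A$, I would take a minimal connected subgraph whose vertex set lies in $A\cup\{x_1,x_2,x_3\}$ and contains $x_1,x_2,x_3$; by minimality (and after replacing the pieces between branch vertices by shortest, hence induced, subpaths in order to remove chords) this subgraph is, up to the edges present inside $\{x_1,x_2,x_3\}$, one of a short list of shapes — the key one being a \emph{spider}: three induced paths $R_1^A,R_2^A,R_3^A$ from a common vertex $d_A\in A$ to $x_1,x_2,x_3$, pairwise meeting only at $d_A$ and with no edge between distinct legs except at $d_A$; when $\{x_1,x_2,x_3\}$ contains an edge this may instead be a triangle — an edge $x_ix_j$ together with a common neighbor in $A$, or $\{x_1,x_2,x_3\}$ itself if it induces a triangle — off which a third leg hangs. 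I would record the analogous structure for $y_1,y_2,y_3$ in $B$.

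The second step is to splice the two sides together through the matching edges $x_iy_i$. In the generic case (a spider with apex $d_A$ on the $A$-side and a spider with apex $d_B$ on the $B$-side), let $P_i$ be the path obtained by concatenating $R_i^A$ from $d_A$ to $x_i$, the edge $x_iy_i$, and $R_i^B$ from $y_i$ to $d_B$. These three paths are internally disjoint, each of length at least two, and pairwise their union induces a hole: the only possible chords would join two legs, or an $x$ to a $y$, or an $A$-vertex to a $B$-vertex, and all of these are non-edges because distinct legs of a spider meet only at the apex, $x_iy_j\notin E(G)$ for $i\ne j$, $A$ is anticomplete to $B$, each $x_i$ is anticomplete to $B$, and each $y_i$ is anticomplete to $A$. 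Hence $G$ contains a theta between $d_A$ and $d_B$. If one side yields a triangle rather than a spider apex, the identical splicing yields a pyramid (triangle versus vertex); if both sides yield triangles, it yields a prism (using $P_i=x_iy_i$ when no vertices of $A$ or $B$ are needed, which is exactly the case $\{x_1,x_2,x_3\}$ and $\{y_1,y_2,y_3\}$ both triangles). When an edge $x_ix_j$ does not extend to a triangle in $A$, I would instead route $x_i$ and $x_j$ through $A$ and treat $x_ix_j$ as a chord of the resulting cycle; the configurations arising here, together with the cases where — despite minimality — two legs happen to be joined by an edge, are what produce the turtle outcome, and each is checked on its own.

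The part I expect to be the main obstacle is the bookkeeping in these degenerate cases rather than the generic construction. In every case one must verify that each path in the configuration produced really has the length required by the relevant definition (so that, e.g., we never create a ``theta'' containing a path of length one), that every edge inside $\{x_1,x_2,x_3\}$ or $\{y_1,y_2,y_3\}$ is correctly absorbed — either as a triangle edge of a prism/pyramid or as a harmless chord — and that no unexpected chord slips in between two of the three constructed paths. Once the shapes of the attachment structures in $A$ and in $B$ are pinned down, each individual case collapses to a short check of the defining non-edges of a theta, pyramid, prism, or turtle, so the difficulty is organizational: choosing the minimal connecting structures so that only finitely many shapes occur and matching each pair of shapes to the right forbidden configuration.
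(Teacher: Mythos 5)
Your route is genuinely different from the paper's. The paper never attempts to classify how $x_1,x_2,x_3$ attach to $A$: after first dispatching the case where some $x_i$ is adjacent to both other $x_j$'s, it arranges (by relabelling) that $x_1x_2,y_1y_2\notin E(G)$, forms the single hole $H=x_1\dd Q_A\dd x_2\dd y_2\dd Q_B\dd y_1\dd x_1$ from the first two strands, attaches the third strand as one path $R$ whose two ends are strictly nested in $H$, and then lets Lemma \ref{lemma:9} (whose proof already encapsulates the theta/pyramid/prism/turtle constructions for nested attachments) do most of the work, finishing with a few explicit thetas and pyramids to control how $x_1,x_2,y_1,y_2$ can meet $R^*$. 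That strategy buys exactly what your strategy is missing: it avoids having to understand the induced structure of a minimal connector inside $A$.

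The gap in your proposal is the very first step. The assertion that a minimal connected subgraph of $G[A\cup\{x_1,x_2,x_3\}]$ containing the three terminals is, after replacing pieces by shortest paths, ``one of a short list of shapes'' with the induced spider as the key case is not proved, and as stated it is not true: taking shortest paths between branch vertices removes chords within a leg but says nothing about edges between distinct legs; a vertex-minimal connected induced subgraph containing the terminals need not be a tree at all (e.g.\ if $A$ induces a triangle and each $x_i$ is pendant on a different corner, every minimal connector is a triangle with three legs, a shape absent from your list); and terminals with several neighbours in $A$, or edges among $x_1,x_2,x_3$ that do not extend to a triangle, create further shapes. Establishing the correct finite classification is essentially a ``three-in-a-tree''/connectifier lemma in its own right. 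Moreover, even granting the classification, you explicitly defer all the degenerate splices (``each is checked on its own''), yet those cases --- edges between legs, triangle blocks, and the configurations that must yield a turtle rather than a theta, pyramid, or prism --- are the entire content of the lemma; the spider-versus-spider splice you verify in detail is the easy part. As written, the proof is a plan for an argument rather than an argument.
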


\begin{proof}
Assume that $G$ contains a $3$-creature with notation as in the definition of a $k$-creature. Suppose first that $x_3$ is adjacent to $x_1$ and $x_2$. Let $Q_A$ be a path from $x_1$ to $x_2$ through $A$, and let $Q_B$ be a path from $y_1$ to $y_2$ through $B$. Then, $x_1 \dd Q_A \dd x_2 \dd y_2 \dd Q_B \dd y_1 \dd x_1$ is a hole $H$ in $G$. Let $R_B = y_3 \dd \hdots \dd b$ be a path from $y_3$ to $Q_B$ through $B$. Consider the path $R = x_3 \dd y_3 \dd R_B \dd b$. Since $x_3$ and $b$ are strictly nested with respect to $H$, by Lemma \ref{lemma:9}, it follows that $x_3$ and $b$ are pendants of $H$ with adjacent neighbors in $H$. However, $x_3$ is adjacent to $x_1$ and $x_2$, a contradiction. 

We may therefore assume that $x_1$ is not adjacent to $x_2$ and $y_1$ is not adjacent to $y_2$. Let $Q_A$ be a path from $x_1$ to $x_2$ through $A$, and let $Q_B$ be a path from $y_1$ to $y_2$ through $B$. Then, $x_1 \dd Q_A \dd x_2 \dd y_2 \dd Q_B \dd y_1 \dd x_1$ is a hole $H$ in $G$. Let $R_A = x_3 \dd \hdots \dd a$ be a path from $x_3$ to $Q_A^*$ through $A$ and let $R_B = y_3 \dd \hdots \dd b$ be a path from $y_3$ to $Q_B^*$ through $B$. Consider the path $R = a \dd R_A \dd x_3 \dd y_3 \dd R_B \dd b$. If $\{x_1, y_1, x_2, y_2\}$ is anticomplete to $R^*$, then $a$ and $b$ are strictly nested with respect to $H$, and $a$ and $b$ are not pendants of $H$ with adjacent neighbors in $H$, contradicting Lemma \ref{lemma:9}. Hence, one of $x_1, y_1, x_2, y_2$ has a neighbor in $R^*$. In particular, $R^*$ is not empty. Suppose $x_1$ and $x_2$ both have neighbors in $R^*$. Then, $G$ contains a theta between $x_1$ and $x_2$ through $Q_A$, $Q_B$, and $R^*$, a contradiction. Therefore, not both $x_1$ and $x_2$ have neighbors in $R^*$. Similarly, not both $y_1$ and $y_2$ have neighbors in $R^*$. Since $\{x_1, y_1, x_2, y_2\}$ is not anticomplete to $R^*$, we may assume that $x_1$ has a neighbor in $R^*$. If $y_2$ also has a neighbor in $R^*$, then $G$ contains a theta between $x_1$ and $y_2$ through $Q_A$, $Q_B$, and $R^*$, a contradiction. Therefore, $y_2$ is anticomplete to $R^*$. 

Let $c$ be the closest neighbor of $x_1$ to $x_3$ in $R_A$. Suppose $y_1$ is anticomplete to $R^*$ and consider the path $c \dd R_A \dd x_3 \dd y_3 \dd R_B \dd b$. Then, $c$ and $b$ are strictly nested with respect to $H$. Since $b$ has a neighbor in $Q_B^*$, $c$ and $b$ are not pendants of $H$ with adjacent neighbors in $H$, contradicting Lemma \ref{lemma:9}. Hence, $y_1$ has a neighbor in $R^*$. Let $a'$ be the neighbor of $a$ in $Q_A$ closest to $x_2$, and let $b'$ be the neighbor of $b$ in $Q_B$ closest to $y_2$. Let $H'$ be the hole given by $H' = x_3 \dd R_A \dd a \dd a' \dd Q_A \dd x_2 \dd y_2 \dd Q_B \dd b' \dd b \dd R_B \dd y_3 \dd x_3$. Since $x_1$ and $y_1$ are strictly nested with respect to $H'$, by Lemma \ref{lemma:9}, $x_1$ and $y_1$ are pendants of $H'$ with adjacent neighbors in $H'$. Therefore, $N_{H'}(x_1) = \{x_3\}$ and $N_{H'}(y_1) = \{y_3\}$. In particular, $a'x_1, b'y_1 \notin E(G)$. Since $R^* \neq \emptyset$, without loss of generality $y_3 \neq b$. Now, $G$ contains a theta between $x_3$ and $y_1$ through $y_3$, $x_1$, and $x_3 \dd R_A \dd a \dd a' \dd Q_A \dd x_2 \dd y_2 \dd Q_B \dd y_1$, a contradiction.
\end{proof}

For the rest of the section, unless otherwise specified, let $C$ be a proper separator of $G \in \C$ and let $F = (c_1, c_2, \ell_1', \ell_1, r_1, r_1',$ $\ell_2', \ell_2, r_2, r_2')$ be an optimal $(C, c_1, c_2)$-frame. We denote by $G_F$ the graph $G \setminus (N(\{c_1, c_2, \ell_1, r_1, \ell_2, r_2\}) \setminus \{\ell_1', \ell_2', r_1', r_2'\})$. The following two lemmas show that we can construct a set $W = W(F)$ containing every $F$-heavy vertex $v$ such that $v \in V(G_F)$.

\begin{lemma}
Let $H$ be an $F$-hole and let $v \in C \cap V(G_F)$ be major for $H$. Then, $v$ is $F$-heavy. 
\label{lemma:major_is_heavy}
\end{lemma}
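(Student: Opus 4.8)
The plan is to reduce the statement to showing that $c_1$ and $c_2$ are distant in $H$ with respect to $v$. Indeed, $F$-heaviness is a property of the frame: by the remark following the definition of $F$-heavy (which invokes Lemma~\ref{lemma:potential_depends_on_frame}), $v$ is $F$-heavy if and only if $v$ is $(c_1,c_2)$-heavy with respect to our given $F$-hole $H$. Since $v$ is major for $H$ by hypothesis, it remains only to verify that $c_1$ and $c_2$ are distant in $H$ with respect to $v$. First I would unpack what $v\in V(G_F)\cap C$ gives: since $v\in C$ and $V(H)\cap C=\{c_1,c_2\}$, $v$ is none of $\ell_1',\ell_2',r_1',r_2'$ (those lie in $L\cup R$), so $v$ is anticomplete to $\{c_1,c_2,\ell_1,r_1,\ell_2,r_2\}$. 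Because $v$ is non-adjacent to $c_1$ and $c_2$, each of $c_1,c_2$ lies in the interior of a unique $v$-sector, and since a subpath of the cycle $H$ whose interior contains both $c_1$ and $c_2$ must contain one of $H_L,H_R$ entirely, it follows that $c_1,c_2$ fail to be distant with respect to $v$ exactly when $v$ has no neighbour in $H_L^*$ or no neighbour in $H_R^*$. So it suffices to show that $v$ has a neighbour in both $H_L^*$ and $H_R^*$.

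Assume not; by symmetry $v$ has no neighbour in $H_R^*$, and hence, being also non-adjacent to $c_1,c_2$, $v$ is anticomplete to $H_R$. Since $v$ is major, Lemma~\ref{lemma:1} gives $v$ at least three neighbours in $H$, all of which therefore lie in $H_L^*\setminus\{\ell_1,\ell_2\}$. Now I would use that $v\in C=N(R)$: since $R$ is connected and $H_R^*\subseteq R$ is non-empty, there is an induced path $\rho=v\dd p_1\dd\hdots\dd p_k$ from $v$ to $H_R^*$ through $R$. Note that $\rho$ is disjoint from $H$ (each $p_i\in R\setminus H_R^*$, and $v\notin H$), its interior $\{p_1,\dots,p_{k-1}\}$ is anticomplete to $H_R^*$ (by definition of such a path) and to $H_L^*$ (different component of $G\setminus C$), and $p_k$ has a neighbour in $H_R^*$. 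The argument then splits on how $c_1$ and $c_2$ attach to the interior of $\rho$.

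If both $c_1$ and $c_2$ have a neighbour among $p_1,\dots,p_{k-1}$, then, as $c_1c_2$ is a non-edge, a suitable subpath $\sigma$ of $\rho$ with all vertices in $\{p_1,\dots,p_{k-1}\}$ yields an induced path $c_1\dd\sigma\dd c_2$; together with $H_L$ and $H_R$ this produces three internally disjoint induced $c_1$--$c_2$ paths of length at least two whose pairwise unions are holes (the interiors lie in $L$, in $H_R^*$, and in $R\setminus H_R^*$ respectively, and $\sigma$ is anticomplete to $H_R^*$), i.e.\ a theta between $c_1$ and $c_2$, a contradiction. If neither $c_1$ nor $c_2$ has a neighbour among $p_1,\dots,p_{k-1}$, then the interior of $\rho$ is anticomplete to all of $H$; moreover $v$ and $p_k$ are strictly nested with respect to $H$, since all neighbours of $v$ in $H$ lie on the arc $H_L$, all neighbours of $p_k$ in $H$ lie on the arc $H_R$, and these neighbour sets are disjoint. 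Lemma~\ref{lemma:9} then forces $v$ to be a pendant of $H$, contradicting that $v$ is major. Finally, if exactly one of $c_1,c_2$, say $c_1$, has a neighbour among $p_1,\dots,p_{k-1}$, let $q$ be such a neighbour closest to $v$ along $\rho$; then the subpath $v\dd p_1\dd\hdots\dd q$ is disjoint from $H$, its interior is anticomplete to $H$, and $q$ is a pendant of $H$ with $N_H(q)=\{c_1\}$, so again $v$ and $q$ are strictly nested and Lemma~\ref{lemma:9} forces $v$ to be a pendant of $H$, a contradiction.

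The main obstacle is the bookkeeping in the latter two cases: one must verify carefully that the (possibly truncated) path is genuinely vertex-disjoint from $H$ and has interior anticomplete to $H$, and that $v$ and the far endpoint are strictly nested rather than crossing, so that Lemma~\ref{lemma:9} is applicable and its conclusion is incompatible with $v$ being major. A few degenerate possibilities ($\ell_1=\ell_2$ and similar frame collapses, $|H_R^*|=1$, or $p_k$ adjacent to $c_1$ or $c_2$) have to be checked, but they do not affect the argument.
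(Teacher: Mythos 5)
Your proof is correct. It differs from the paper's mainly in organization: the paper's proof is a four-line corollary of the butterfly machinery --- assuming $v$ is $F$-light, it takes a $v$-butterfly $P$, notes that $v$ (being major and, since $v\in C\cap V(G_F)$, anticomplete to $\{c_1,c_2,\ell_1,\ell_2,r_1,r_2\}$) must be an endpoint of $P$, observes that every outcome of Lemma~\ref{lemma:butterfly_end_shapes} forces that endpoint to be adjacent to $c_1$ or to $\ell_1$/$r_1$ or to be a pendant (all impossible for $v$), so $P$ has length at most one, then excludes length zero by Lemma~\ref{lemma:heavy_nbrs_interior} and length one by Lemma~\ref{lemma:9}. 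You instead bypass Lemma~\ref{lemma:butterfly_end_shapes} and, for the single nontrivial wing $v\dd p_1\dd\hdots\dd p_k$, re-run essentially the same case analysis that proves that lemma: both of $c_1,c_2$ attaching to the interior yields a theta through $H_L$, $H_R$ and the interior; otherwise you truncate at the first neighbour of $c_1$ (or $c_2$) and apply Lemma~\ref{lemma:9} to a strictly nested pair, contradicting that $v$ is major. Your opening reduction (distant $\Leftrightarrow$ neighbours in both $H_L^*$ and $H_R^*$, using that $v$ is non-adjacent to $c_1,c_2$) is also correct and supplies the direction of Lemma~\ref{lemma:heavy_nbrs_interior} that the paper cites. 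So the two proofs rest on identical ingredients (Lemma~\ref{lemma:9} plus theta-freeness applied to a path from $v$ across to the other full component); yours is more self-contained but duplicates a special case of work the paper has already packaged into Lemma~\ref{lemma:butterfly_end_shapes}, and your closing list of degenerate cases ($\ell_1=\ell_2$ forces $v$ to have no neighbour in $H$ at all, so it is vacuous; $p_k$ adjacent to $c_1$ or $c_2$ does not disturb strict nestedness since $N_H(p_k)\subseteq H_R$ while $N_H(v)\subseteq H_L^*\setminus\{\ell_1,\ell_2\}$) indeed causes no trouble.
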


\begin{proof}
Assume that $v$ is not $F$-heavy, and let $P$ be a $v$-butterfly.  Since $v$ is major for $H$, $v$ must be an endpoint of $P$. Since $v \in V(G_F)$, $v$ is anticomplete to $\{c_1, c_2, \ell_1, r_1, \ell_2, r_2\}$, and so by Lemma~\ref{lemma:butterfly_end_shapes}, $P$ is of length at most one. Since $v$ is $F$-light, by Lemma \ref{lemma:heavy_nbrs_interior}, $P$ is of length exactly one. But then since $v$ is anticomplete to $\{c_1, c_2, \ell_1, r_1, \ell_2, r_2\}$, $P$ and $H$ contradict Lemma \ref{lemma:9}.
\end{proof}

We call $v \in C$ a \emph{$(c_1, c_2)$-strong} vertex of $G$ if $c_1$ and $c_2$ belong to different components of $G \setminus N[v]$. Note that given a graph $G$, and $v, c_1, c_2 \in V(G)$, one can determine if $v$ is $(c_1, c_2)$-strong in time $\mathcal{O}(|V(G)|^2)$.

\begin{lemma}
One can construct in polynomial time a set $W = W(F)$ that contains all $F$-heavy vertices $v$ such that $v$ is anticomplete to $\{c_1, c_2, \ell_1, \ell_2, r_1, r_2\}$ and $W \subseteq C$.
\label{lemma:construct_Y1}
\end{lemma}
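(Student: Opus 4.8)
## Proof proposal for Lemma~\ref{lemma:construct_Y1}

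The plan is to build $W$ by a ``candidate-then-verify'' procedure: enumerate a polynomially-sized family of candidate vertices together with small witness structures certifying that they are $(c_1,c_2)$-heavy with respect to \emph{some} $F$-hole, discard everything that fails the certification, and add everything that passes. The key point is that, by Lemma~\ref{lemma:potential_depends_on_frame}, being $F$-heavy is a property of the frame, not of a particular $F$-hole, so it suffices to exhibit one $F$-hole witnessing heaviness for each vertex we include; and by Lemma~\ref{lemma:major_is_heavy}, within $V(G_F)$ (where every vertex is automatically anticomplete to $\{c_1,c_2,\ell_1,r_1,\ell_2,r_2\}$) a vertex is $F$-heavy if and only if it is major for some $F$-hole. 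So the task reduces to: detect, in polynomial time, which vertices $v \in C \cap V(G_F)$ are major for some $F$-hole. Note that membership in $C$ need not be checked directly (since $C$ is not given as input) because any $F$-heavy vertex lies in $C$ by the remark following the definition of $(c_1,c_2)$-heavy; thus it is enough that everything we put into $W$ is genuinely $F$-heavy and that every $F$-heavy $v$ anticomplete to $\{c_1,c_2,\ell_1,r_1,\ell_2,r_2\}$ gets included.

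Concretely, I would first construct a single canonical $F$-hole $H_0$: take a shortest $c_1$--$c_2$ path $H_{0L}$ through $L$ beginning $c_1\dd\ell_1\dd\ell_1'\dd\cdots$ and a shortest one $H_{0R}$ through $R$ beginning $c_1\dd r_1\dd r_1'\dd\cdots$, both ending $\cdots\dd\ell_2\dd c_2$ and $\cdots\dd r_2\dd c_2$ respectively, using the ``path from $X$ to $Z$ through $Y$'' primitive described in the Definitions (the sides $L,R$ are recoverable as the components of $G\setminus C$, but since $C$ is unknown one works instead in $G_F$ and uses the frame vertices — one subtlety to handle carefully is recovering enough of the hole structure from $F$ alone). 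Then for each vertex $v$ anticomplete to $\{c_1,c_2,\ell_1,r_1,\ell_2,r_2\}$ and with a neighbor in $H_{0L}^*\setminus\{\ell_1,\ell_2\}$ and a neighbor in $H_{0R}^*\setminus\{r_1,r_2\}$, Lemma~\ref{lemma:heavy_nbrs_interior} immediately certifies that $v$ is $(c_1,c_2)$-heavy with respect to $H_0$, hence $F$-heavy; put all such $v$ into $W$. The content of the lemma is then the converse: every $F$-heavy $v$ anticomplete to $\{c_1,c_2,\ell_1,r_1,\ell_2,r_2\}$ has a neighbor in $H_{0L}^*\setminus\{\ell_1,\ell_2\}$ and in $H_{0R}^*\setminus\{r_1,r_2\}$, and so is caught. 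This converse is where Lemma~\ref{lemma:potential_depends_on_frame} does the work: $v$ is $(c_1,c_2)$-heavy with respect to $H_0$ (since $H_0$ is an $F$-hole), so it has a neighbor in $H_{0L}^*$ and in $H_{0R}^*$; and since $v$ is anticomplete to $\{\ell_1,\ell_2,r_1,r_2\}$ those neighbors automatically lie in $H_{0L}^*\setminus\{\ell_1,\ell_2\}$ and $H_{0R}^*\setminus\{r_1,r_2\}$. Finally, $W\subseteq C$ because every element of $W$ is $F$-heavy.

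The main obstacle I anticipate is not the graph theory — which is essentially immediate from Lemmas~\ref{lemma:potential_depends_on_frame} and~\ref{lemma:heavy_nbrs_interior} once $H_0$ is in hand — but the bookkeeping needed to construct the canonical $F$-hole $H_0$ from the frame $F$ \emph{without} knowing $C$, $L$, or $R$ in advance. One must argue that the frame $F$, together with the ambient graph, determines enough: the two full components $L,R$ are the components of $G\setminus C$, and while $C$ itself is the unknown we are ultimately reconstructing, for the purposes of this lemma it is cleanest to phrase the construction inside $G_F$, observing that $\ell_1,\ell_2$ (resp. $r_1,r_2$) and the paths avoiding $N(\{c_1,c_2,\ell_1,r_1,\ell_2,r_2\})$ stay on the correct side, and that a shortest such path from $\ell_1'$ to $\ell_2'$ through $G_F$ together with the edges $c_1\ell_1,\ell_1\ell_1',\ell_2'\ell_2,\ell_2 c_2$ reconstitutes a valid $H_{0L}$ (degenerate cases $\ell_1=\ell_2$ etc.\ handled separately via the frame's convention). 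I would also double-check the edge cases where $H$ has length exactly seven or eight so that $H_{0L}^*\setminus\{\ell_1,\ell_2\}$ is nonempty whenever $v$ genuinely has an interior neighbor there — but this is forced, since an $F$-heavy vertex has a neighbor strictly between $\ell_1$ and $\ell_2$ on the left side by definition of being distant from $c_1,c_2$. Everything else is a routine polynomial-time enumeration over $O(|V(G)|)$ candidate vertices with an $O(|V(G)|^2)$ adjacency check each.
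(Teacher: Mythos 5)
There is a genuine gap, and it is exactly at the point you dismiss as ``bookkeeping'': the construction of the canonical $F$-hole $H_0$ from $F$ alone. In the paper this is Lemma~\ref{lemma:construct_H}, and its proof \emph{uses} Lemma~\ref{lemma:construct_Y1}: the hole is obtained by taking shortest paths from $\ell_1'$ to $\ell_2'$ and from $r_1'$ to $r_2'$ in $G_F \setminus W$, i.e.\ only \emph{after} the set $W$ of this lemma has been removed. Your plan therefore runs in a circle: you need $H_0$ to compute $W$, but you need $W$ to compute $H_0$. Your proposed shortcut --- that shortest paths in $G_F$ between $\ell_1'$ and $\ell_2'$ ``stay on the correct side'' --- is false: an $F$-heavy vertex $w$ that is anticomplete to $\{c_1,c_2,\ell_1,\ell_2,r_1,r_2\}$ survives in $G_F$, lies in $C$, and may be adjacent to both $\ell_1'$ and $\ell_2'$, in which case the shortest $\ell_1'$--$\ell_2'$ path in $G_F$ is $\ell_1'\dd w\dd \ell_2'$, which does not lie in $L$; the resulting cycle is not an $F$-hole and Lemma~\ref{lemma:heavy_nbrs_interior} no longer certifies anything. (Your appeal to Lemma~\ref{lemma:potential_depends_on_frame} and Lemma~\ref{lemma:heavy_nbrs_interior} is sound \emph{given} a genuine $F$-hole; the problem is producing one.)

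The paper breaks the circularity by never constructing a hole in the algorithm at all. It computes $X_1$, the set of $(c_1,c_2)$-strong vertices of $G_F$ (vertices $v$ with $c_1,c_2$ in different components of $G_F\setminus N[v]$), then $X_2$, the $(c_1,c_2)$-strong vertices of $G_F\setminus X_1$, and outputs $W=X_1\cup X_2$; an $F$-hole with shortest-path sides appears only in the correctness analysis, as an existential object. Soundness ($W\subseteq C$) is immediate because a strong vertex must have neighbors in both $H_L^*$ and $H_R^*$; completeness uses Theorem~\ref{theorem:star_cutset}: non-hub heavy vertices are caught in the first round, and the shortest-path choice of $H_L$, $H_R$ forces every remaining major vertex of $G_F\setminus X_1$ to be a hub, so the ``every major vertex is a hub'' case of Theorem~\ref{theorem:star_cutset} catches the heavy hubs in the second round. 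Your proposal does not engage with the hub distinction or with the star-cutset theorem, which is where the real content of this lemma lives.
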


\begin{proof}
Let $H$ be an $F$-hole where the path from $\ell_1'$ to $\ell_2'$ through $H_L$ is a shortest path from $\ell_1'$ to $\ell_2'$ in $L$, and the path from $r_1'$ to $r_2'$ through $H_R$ is a shortest path from $r_1'$ to $r_2'$ through $R$. We may assume that $H$ has length greater than six since otherwise $W$ is empty. Let $X_1$ be the set of all $(c_1, c_2)$-strong vertices of $G_F$, and let $X_2$ be the set of all $(c_1, c_2)$-strong vertices of $G_F \setminus X_1$. Note that $X_1$ and $X_2$ can be constructed in time $\mathcal{O}(|V(G)|^3)$. If $v$ is $(c_1, c_2)$-strong, then $v$ has a neighbor in $H_L^*$ and a neighbor in $H_R^*$, so $v \in C$. It follows that $X_1 \cup X_2 \subseteq C$. We claim that $W = X_1 \cup X_2$ contains all $F$-heavy vertices $v$ such that $v$ is anticomplete to $\{c_1, c_2, \ell_1, r_1, \ell_2, r_2\}$. 

By Theorem \ref{theorem:star_cutset}, $X_1$ contains all $F$-heavy vertices $v$ in $G_F$ such that $v$ is not a hub of $H$. Now, consider $G_F \setminus X_1$. Every $F$-heavy vertex in $G_F \setminus X_1$ is a hub. Suppose $v \in V(G_F \setminus X_1)$ is a major vertex for $H$ and $v$ is $F$-light. By Lemma \ref{lemma:major_is_heavy}, $v \in L$ or $v \in R$. Without loss of generality suppose $v \in L$. Since $v$ is a major vertex for $H$ and $v \in V(G_F \setminus X_1)$, it follows that $N(v) \cap (H_L^* \setminus \{\ell_1, \ell_2\})$ is not contained in a path of length three, so there exists a shorter path from $\ell_1'$ to $\ell_2'$ in $L$ through $v$, a contradiction. Therefore, every major vertex for $H$ in $G_F \setminus X_1$ is $F$-heavy, so every major vertex for $H$ in $G_F \setminus X_1$ is a hub. Then, it follows from Theorem \ref{theorem:star_cutset} that $X_2$ contains every $F$-heavy vertex of $H$ in $G_F \setminus X_1$.

Finally, let $v$ be an $F$-heavy vertex in $G$ such that $v$ is anticomplete to $\{c_1, c_2, \ell_1, r_1, \ell_2, r_2\}$.  If $v$ is not a hub, then $v$ is an $F$-heavy vertex in $G_F$, so $v \in X_1$. If $v$ is a hub and $v \not \in X_1$, then $v$ is an $F$-heavy vertex of $G_F \setminus X_1$,  so $v \in X_2$. 
\end{proof}

\begin{lemma}
Given an optimal frame $F$ of $C$, one can construct in polynomial time an $F$-hole $H$.
\label{lemma:construct_H}
\end{lemma}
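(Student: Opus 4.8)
The plan is to reconstruct the two arms $H_L$ and $H_R$ of a would‑be $F$‑hole separately, each as a shortest path between two consecutive frame vertices in a subgraph derived from $G_F$, and then to glue the arms at $c_1,c_2$ and check the result is an $F$‑hole. Throughout I only use that $F$ is a \emph{genuine} frame, so that \emph{some} $F$‑hole $H_0$ exists; since a hole is induced, $H_0$ hands me for free all the adjacency constraints among frame vertices that I will need: $c_1$ is non‑adjacent to $c_2$, adjacent to $\ell_1$ and $r_1$, and anticomplete to $\{\ell_2,r_2,\ell_1',\ell_2',r_1',r_2'\}$; $\ell_1$ is adjacent to $\ell_1'$ but non‑adjacent to $\ell_2$ and $\ell_2'$; and the symmetric statements, with the evident degeneracies when an arm of $H_0$ has length $2$ or $3$ (so that some of $\ell_1,\ell_1',\ell_2',\ell_2$ coincide). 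Only a bounded number of such degenerate frame shapes occur, each is recognized directly from $F$, and each is dispatched by hand; below I describe the generic case.

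Let $W=W(F)$ be as in Lemma~\ref{lemma:construct_Y1}, let $D_L$ be the connected component of $\ell_1'$ in $G_F\setminus(\{\ell_1,\ell_2,r_1,r_2\}\cup W)$, and symmetrically let $D_R$ be the component of $r_1'$. The key claim is that $\ell_2'\in D_L\subseteq L$ and $r_2'\in D_R\subseteq R$. For the first part: the interior of $H_{0L}$ other than $\ell_1,\ell_2$ consists of vertices anticomplete to $\{c_1,c_2,\ell_1,r_1,\ell_2,r_2\}$ (as $H_0$ is induced), hence vertices of $G_F$ not in $W$ (they lie in $H_{0L}^*\subseteq L$, which is disjoint from $C\supseteq W$), and $H_{0L}$ connects $\ell_1'$ to $\ell_2'$ through them; moreover, by the constraints of the first paragraph together with the definition of $G_F$, the only neighbors of $c_1$ surviving in $G_F$ are $\ell_1,r_1$ and the only ones of $c_2$ are $\ell_2,r_2$, so $c_1,c_2$ become isolated once $\{\ell_1,\ell_2,r_1,r_2\}$ is deleted. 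For the containment $D_L\subseteq L$: a vertex reachable from $\ell_1'$ without leaving $L$ stays in $L$, so the only way to escape $L$ is to pass through a vertex of $C\setminus\{c_1,c_2\}$ that survives in $G_F\setminus W$; such a vertex is anticomplete to $\{c_1,c_2,\ell_1,r_1,\ell_2,r_2\}$ and, not being $F$‑heavy (Lemma~\ref{lemma:construct_Y1}), cannot have neighbors in both $H_{0L}^*$ and $H_{0R}^*$, and the escape route is then ruled out by a structural argument using $H_0$, the earlier lemmas (in particular Lemma~\ref{lemma:9}), and $G\in\C$ — an escaping path joining the two sides of $H_0$ would produce a theta or pyramid. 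I then compute, by breadth‑first search as in the path‑finding routine described in Section~\ref{sec:intro}, a shortest path $P_L$ from $\ell_1'$ to $\ell_2'$ in $G[D_L\cup\{\ell_1',\ell_2'\}]$ and a shortest path $P_R$ from $r_1'$ to $r_2'$ in $G[D_R\cup\{r_1',r_2'\}]$; in degenerate frames $P_L$ (resp.\ $P_R$) is a single vertex or a single edge. All of this is polynomial in $|V(G)|$.

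Finally I set $H_L:=c_1\dd\ell_1\dd P_L\dd\ell_2\dd c_2$, $H_R:=c_1\dd r_1\dd P_R\dd r_2\dd c_2$, and $H:=H_L\cup H_R$, and verify that $H$ is an $F$‑hole. It is a cycle, since the interiors of $H_L$ and $H_R$ lie in the disjoint sets $L\supseteq D_L$ and $R\supseteq D_R$ while $c_1,c_2\notin L\cup R$ and $c_1$ is non‑adjacent to $c_2$. It is chordless: $P_L,P_R$ are shortest paths in induced subgraphs, hence induced; the attached frame vertices introduce no chords by the constraints of the first paragraph together with the fact that $D_L$ avoids $N(\{c_1,c_2,\ell_1,\ell_2\})$ except for $\ell_1',\ell_2'$ (and symmetrically for $D_R$), which follows from the definition of $G_F$; and there are no chords between $H_L^*$ and $H_R^*$ because $L$ is anticomplete to $R$. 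Its frame equals $F$ by construction, and $V(H)\cap C=\{c_1,c_2\}$ with $H_L^*\subseteq L$ and $H_R^*\subseteq R$. Hence $H$ is an $F$‑hole.

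The step I expect to be the main obstacle is the containment $D_L\subseteq L$ (and $D_R\subseteq R$): showing that the breadth‑first search genuinely stays on the correct side, i.e.\ that it cannot leak across $C$ through a vertex surviving in $G_F\setminus W$. This is exactly where the hypothesis $G\in\C$ is essential, through Lemma~\ref{lemma:9} applied to $H_0$; deleting $W$ is what removes the most dangerous such vertices (the $F$‑heavy ones), leaving only configurations the structural lemmas exclude. A secondary, purely bookkeeping, difficulty is enumerating and handling the finitely many degenerate frame shapes.
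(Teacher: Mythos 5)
Your construction is essentially the paper's: build $W=W(F)$ via Lemma~\ref{lemma:construct_Y1}, take shortest paths from $\ell_1'$ to $\ell_2'$ and from $r_1'$ to $r_2'$ inside $G_F\setminus W$, and glue them to $V(F)$. However, the step you yourself flag as ``the main obstacle'' --- that the component $D_L$ of $\ell_1'$ stays inside $L$ (equivalently, that the shortest path cannot leak across $C$) --- is exactly the entire content of the lemma, and you do not prove it; ``the escape route is then ruled out by a structural argument \dots an escaping path joining the two sides of $H_0$ would produce a theta or pyramid'' is a placeholder, not an argument. A direct theta/pyramid extraction does not work when the escaping vertex is far from the reference hole, which is why the paper routes the contradiction through Lemma~\ref{lemma:butterfly_end_shapes}: one takes the first vertex $\ell^*$ of the path that leaves $L$, observes it lies in $C$ and must be $F$-light (an $F$-heavy vertex anticomplete to $\{c_1,c_2,\ell_1,\ell_2,r_1,r_2\}$ would be in $W$, and a vertex of $G_F$ is anticomplete to those six vertices by construction), and then forms a path from a reference $F$-hole's left side through $\ell^*$ to its right side. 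If that path has length at least two, Lemma~\ref{lemma:butterfly_end_shapes} forces its $L$-side endpoint to be adjacent to $c_1$ or to be a pendant at $\ell_1$ --- impossible for a vertex of $G_F\setminus W$; if it has length at most one, $F$-lightness and Lemma~\ref{lemma:9} give the contradiction. Without this (or an equivalent) argument, your proof is incomplete at its crux.

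A secondary point: the needed path from $\ell^*$ to the right side of the reference hole goes \emph{through $R$} and is not generally visible inside $G_F\setminus W$ or inside $H_0$, so the setup of the contradiction genuinely requires leaving the subgraph in which your BFS runs; this is another reason the one-line sketch cannot be patched locally. The rest of your write-up (the gluing, chordlessness, and the polynomial-time bookkeeping, including the degenerate frame shapes) is fine and matches the paper.
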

\begin{proof}
By Lemma \ref{lemma:construct_Y1}, we can construct the set $W = W(F) \subseteq C$ of all $F$-heavy vertices $v$ such that $v$ is anticomplete to $\{c_1, c_2, \ell_1, \ell_2, r_1, r_2\}$. Let $H$ be the graph given by the union of $V(F)$, a shortest path $Q_L$ from $\ell_1'$ to $\ell_2'$ through $G_F \setminus W$, and a shortest path $Q_R$ from $r_1'$ to $r_2'$ through $G_F \setminus W$. We claim that $H$ is an $F$-hole.

If $Q_L \subseteq L$ and $Q_R \subseteq R$, then clearly $H$ is an $F$-hole, so assume without loss of generality that $Q_L \not \subseteq L$. Let $\ell^*$ be the vertex of $Q_L \setminus L$ closest to $\ell_1'$ on $Q_L$. Since $\ell^*$ has a neighbor in $L$ and $\ell^* \not \in L$, it follows that $\ell^* \in C$. Suppose $\ell^*$ is $F$-heavy. Since $W$ contains all $F$-heavy vertices anticomplete to $\{c_1, c_2, \ell_1, \ell_2, r_1, r_2\}$, it follows that $\ell^*$ has a neighbor in $\{c_1, c_2, \ell_1, \ell_2, r_1, r_2\}$, a contradiction. Therefore, $\ell^*$ is $F$-light. Let $J$ be an $F$-hole. Let $P_R$ be a path from $\ell^*$ to $J_R^*$ through $R$, and let $P_L$ be a path from $\ell^*$ to $J_L^*$ contained in  $\ell^* \dd Q_L \dd \ell_1'$. Consider the path $P = P_L \dd \ell^* \dd P_R$ and let $p_k$ be the end of $P_L$ with neighbors in $J_L^*$. Suppose $P$ is of length at least two. By Lemma \ref{lemma:butterfly_end_shapes}, it follows that either $p_k$ is adjacent to $c_1$ or $p_k$ is a pendant with $N_{J}(p_k) = \{\ell_1\}$. Since $P_L \subseteq V(G_F \setminus W)$, $p_k$ is not adjacent to $c_1$ or $\ell_1$, a contradiction. Therefore, $P$ is of length at most one. Because $\ell^*$ is $F$-light and $\ell^*$ is anticomplete to $\{c_1, c_2, \ell_1, \ell_2, r_1, r_2\}$, it follows that $\ell^*$ does not have a neighbor in both $J_L^*$ and $J_R^*$. Therefore, $P$ has length exactly one, and $P$ and $J$ contradict Lemma \ref{lemma:9}.
\end{proof}

By Lemma \ref{lemma:construct_H}, we can construct an $F$-hole $H$. Let $c_3 \in C$ be $F$-light, and let $P = p_k \dd \hdots \dd p_1 \dd c_3 \dd $ $q_1 \dd \hdots \dd q_j$ be a $c_3$-butterfly for $H$. By Theorem \ref{theorem:4_final_thm}, $c_3$ is not a central vertex of $P$. We call $c_3$ an \emph{$L$-end} vertex if $c_3 = p_k$, and an \emph{$L$-adjacent} vertex if $c_3 = p_{k-1}$. We define similarly \emph{$R$-end} and \emph{$R$-adjacent}. The following lemma shows that every $L$-adjacent vertex is in the neighborhood of two vertices in $L$ and that every $R$-adjacent vertex is in the neighborhood of two vertices in $R$.

\begin{lemma}
Let $X \subseteq N(H_L^*) \cap L$ be a minimal subset of $N(H_L^*) \cap L$ such that every $L$-adjacent vertex has a neighbor in $X$. Then, $|X| \leq 2$. Similarly, let $Y \subseteq N(H_R^*) \cap R$ be a minimal subset of $N(H_R^*) \cap R$ such that every $R$-adjacent vertex has a neighbor in $Y$. Then, $|Y| \leq 2$.
\label{lemma:first_catch}
\end{lemma}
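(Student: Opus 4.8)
The plan is to prove $|X| \le 2$ (the bound on $|Y|$ then follows by exchanging the roles of $L$ and $R$ and of $H_L^*$ and $H_R^*$), arguing by contradiction via Lemma~\ref{lemma:three_paths_lemma}: if $|X| \ge 3$ I will exhibit an induced $3$-creature in $G$. The starting point is the standard consequence of minimality. Since $X$ is an inclusion-minimal subset of $N(H_L^*)\cap L$ such that every $L$-adjacent vertex has a neighbor in $X$, for each $x \in X$ the set $X \setminus \{x\}$ fails this property, so there is an $L$-adjacent vertex $c(x)$ with no neighbor in $X\setminus\{x\}$; as $c(x)$ does have a neighbor in $X$, that neighbor is exactly $x$. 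Consequently, distinct $x,x'\in X$ give distinct witnesses $c(x)\ne c(x')$ (a common value would have two different unique neighbors in $X$), and each $c(x)$ is disjoint from $X$ since $c(x)\in C$ while $X\subseteq L$.

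Next, assume $|X|\ge 3$ and fix three distinct $x_1,x_2,x_3\in X$ with witnesses $y_i:=c(x_i)$. I will set $A:=H_L^*$ and $B:=R$, and claim that $G[A\cup B\cup\{x_1,x_2,x_3\}\cup\{y_1,y_2,y_3\}]$ is a $3$-creature. The needed facts are all immediate: $G[A]$ is connected (a subpath of the hole $H$), $G[B]$ is connected (a component of $G\setminus C$), and $A$ is anticomplete to $B$ because $A\subseteq L$ is anticomplete to $R$; the four sets $A,B,\{x_i\},\{y_i\}$ are pairwise disjoint since $x_i\in L\setminus H_L^*$ and $y_i\in C$. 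For each $i$: $x_iy_i\in E(G)$ and $x_jy_i\notin E(G)$ for $j\ne i$ by the choice of witnesses; $x_i\in N(H_L^*)\cap L$ has a neighbor in $A=H_L^*$ and, lying in $L$, is anticomplete to $B=R$; and $y_i$, being in $C=N(R)$, has a neighbor in $B=R$, while, being an $L$-adjacent vertex, it has no neighbor in $A=H_L^*$. This last point is the only place the definitions must be unwound: an $L$-adjacent vertex $c$ is the vertex $p_{k-1}$ of a $c$-butterfly, so its left wing has the form $c\dd p_1$, a path from $c$ to $H_L^*$ through $L$, and by the definition of such a path $c$ is anticomplete to $H_L^*$. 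Hence all conditions of a $3$-creature hold, contradicting Lemma~\ref{lemma:three_paths_lemma}, so $|X|\le 2$.

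I do not expect a genuine obstacle: the argument is short once $A=H_L^*$ and $B=R$ are identified as the two ``sides'' and the edges $x_iy_i$ serve as the creature's legs. The only points requiring care are (i) reading off ``an $L$-adjacent vertex has no neighbor in $H_L^*$'' from the definition of $L$-adjacency (which in turn rests on Theorem~\ref{theorem:4_final_thm} for the classification of $F$-light vertices into $L$-end/$L$-adjacent/$R$-end/$R$-adjacent), and (ii) checking that the witnesses are pairwise distinct and disjoint from $\{x_1,x_2,x_3\}\cup A\cup B$ — both of which follow at once from the fact that witnesses lie in $C$ while $X$ lies in $L\setminus H_L^*$. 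No case analysis should be needed.
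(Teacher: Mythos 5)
Your proof is correct and follows essentially the same route as the paper: minimality yields private witnesses $y_i$ with $N_X(y_i)=\{x_i\}$, and a $3$-creature contradicting Lemma~\ref{lemma:three_paths_lemma} is built with $A=H_L^*$. The only (harmless) difference is that you take $B=R$ outright, whereas the paper uses the smaller connected set formed by $H_R^*$ together with the right wings of the $y_i$-butterflies; your choice works because $R$ is a full component, so each $y_i\in C$ automatically has a neighbor in $R$.
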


\begin{proof}
Suppose $|X| > 2$ and let $x_1, x_2, x_3 \in X$. It follows from the minimality of $X$ that for every $x_i \in X$ there exists $y_i \in C$ such that $y_i$ is $L$-adjacent and $N_X(y_i) = \{x_i\}$. For $i = 1, 2, 3$, let $P_i$ be the right wing of a $y_i$-butterfly. Let $A = H_L^*$ and let $B = (P_1 \setminus \{y_1\}) \cup (P_2 \setminus \{y_2\}) \cup (P_3 \setminus \{y_3\}) \cup H_R^*$. Then, $A$ is anticomplete to $B$, $G[A]$ and $G[B]$ are connected, and for $i = 1, 2, 3$, $x_i$ has a neighbor in $A$ and is anticomplete to $B$, and $y_i$ has a neighbor in $B$ and is anticomplete to $A$. It follows that $A \cup B \cup \{x_1, x_2, x_3\} \cup \{y_1, y_2, y_3\}$ is a 3-creature, contradicting Lemma \ref{lemma:three_paths_lemma}.
\end{proof}

Let $X = \{x_1, x_2\}$ and $Y = \{y_1, y_2\}$ be as in Lemma \ref{lemma:first_catch} (so possibly $x_1 = x_2$ or $y_1 = y_2$). Let $M_1(C) = (x_1, x_2, y_1, y_2)$. Let $C_L = N(H_L^* \cup \{x_1, x_2\})$ and $C_R = N(H_R^* \cup \{y_1, y_2\})$. Note that $C_L$ and $C_R$ depend only on $H$ and $M_1(C)$.

\begin{lemma}
$C_L \cap C_R \subseteq C \subseteq C_L \cup C_R$.
\label{lem:CL_cap_CR_in_C}
\end{lemma}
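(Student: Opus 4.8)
The inclusion $C_L \cap C_R \subseteq C$ should be the easy direction. A vertex $v \in C_L \cap C_R$ has a neighbor in $H_L^* \cup \{x_1, x_2\}$ and a neighbor in $H_R^* \cup \{y_1, y_2\}$. Since $H_L^* \cup \{x_1,x_2\} \subseteq L$ and $H_R^* \cup \{y_1,y_2\} \subseteq R$, and $L, R$ are (components of $G \setminus C$ with) disjoint vertex sets with no edges between them, $v$ cannot itself lie in $L$ (it has a neighbor in $R$) nor in $R$ (it has a neighbor in $L$); since $L$ and $R$ are the two full components for $C$ and any vertex outside $C$ lies in some component of $G \setminus C$ — but a vertex in a third component has no neighbors in $L$ or $R$ — we conclude $v \in C$. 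I would spell this out: $v \notin L$, $v \notin R$, and $v$ is not in any other component of $G \setminus C$ since it has neighbors in both $L$ and $R$, hence $v \in C$.

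For the harder inclusion $C \subseteq C_L \cup C_R$, take $c_3 \in C$ and suppose $c_3 \notin C_L \cup C_R$; I want a contradiction. Since $c_3 \in C = N(L) = N(R)$, $c_3$ has a neighbor in $L$ and a neighbor in $R$. If $c_3$ is itself $F$-heavy, then by definition it has a neighbor in $H_L^*$ and a neighbor in $H_R^*$, so $c_3 \in C_L \cap C_R$, contradiction. So $c_3$ is $F$-light, and we may form a $c_3$-butterfly $P = p_k \dd \cdots \dd p_1 \dd c_3 \dd q_1 \dd \cdots \dd q_j$ (using that $H$ is an $F$-hole, which exists by Lemma~\ref{lemma:construct_H}). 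By Theorem~\ref{theorem:4_final_thm}, $c_3$ is not a central vertex of $P$, so $c_3$ is $L$-end, $L$-adjacent, $R$-end, or $R$-adjacent. If $c_3$ is $L$-end, i.e.\ $c_3 = p_k$, then $c_3$ has a neighbor in $H_L^*$ by definition of a butterfly, so $c_3 \in C_L$, contradiction. If $c_3$ is $L$-adjacent, then by the defining property of $X$ (Lemma~\ref{lemma:first_catch}), $c_3$ has a neighbor in $X = \{x_1, x_2\}$, so again $c_3 \in C_L$. Symmetrically, $c_3$ being $R$-end or $R$-adjacent forces $c_3 \in C_R$. In every case we contradict $c_3 \notin C_L \cup C_R$, completing the proof.

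The main point requiring care is the logical bookkeeping around which butterfly to use and why $c_3$-butterflies exist and have the required endpoint properties: one must observe that $c_3$ being $F$-light is exactly what is needed to invoke the butterfly machinery, and that an $F$-heavy $c_3$ is handled directly. A subtle spot is the $L$-adjacent case: the set $X$ in Lemma~\ref{lemma:first_catch} is extracted as a minimal hitting set for \emph{all} $L$-adjacent vertices (with respect to the fixed $H$), so one must make sure the butterfly used to certify $c_3$ as $L$-adjacent is compatible with the $H$ used to define $X$ — but since both $X$ and the butterflies are taken with respect to the same fixed $F$-hole $H$, this is consistent. I expect no real obstacle here; the lemma is essentially a clean corollary of Theorem~\ref{theorem:4_final_thm} and Lemma~\ref{lemma:first_catch}, together with the trivial separator observations.
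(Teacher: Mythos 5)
Your proof is correct and follows the paper's argument essentially verbatim: the inclusion $C_L \cap C_R \subseteq C$ comes from $C_L \subseteq L \cup C$, $C_R \subseteq R \cup C$, and the disjointness of $L$ and $R$, while $C \subseteq C_L \cup C_R$ is obtained through the same heavy/light dichotomy, Theorem~\ref{theorem:4_final_thm}, and Lemma~\ref{lemma:first_catch}. The only cosmetic omission is that $c_1$ and $c_2$ must be set aside at the start (butterflies and Theorem~\ref{theorem:4_final_thm} are defined only for $F$-light vertices of $C \setminus \{c_1, c_2\}$), but they lie in $C_L \cap C_R$ trivially since $\ell_1, \ell_2 \in H_L^*$ and $r_1, r_2 \in H_R^*$; also, the step ``$F$-heavy implies a neighbor in both $H_L^*$ and $H_R^*$ of \emph{this particular} $H$'' formally uses Lemma~\ref{lemma:potential_depends_on_frame}, which you should cite rather than calling it a matter of definition.
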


\begin{proof}
It follows from the definition of $C_L$ that $C_L \subseteq L \cup C$. Similarly, $C_R \subseteq R \cup C$. Since $L$ and $R$ are disjoint, it follows that $C_L \cap C_R \subseteq C$. Next, suppose $c \in C$. Since $\{c_1, c_2\} \subseteq C_L \cap C_R$, we may assume that $c \in C \setminus \{c_1, c_2\}$. If $c$ is $F$-heavy, then by Lemma \ref{lemma:potential_depends_on_frame} $c$ is $(c_1, c_2)$-heavy with respect to $H$, and hence $c$ has neighbors in both $H_L^*$ and $H_R^*$, so $c \in C_L \cap C_R$. Therefore, we may assume that $c$ is $F$-light. By Theorem \ref{theorem:4_final_thm}, $c$ is either $L$-end, $L$-adjacent, $R$-end, or $R$-adjacent. If $c$ is $L$-end, then $c$ has a neighbor in $H_L^*$. If $c$ is $L$-adjacent, it follows from Lemma \ref{lemma:first_catch} that $c$ has a neighbor in $\{x_1, x_2\}$. Therefore, if $c$ is $L$-end or $L$-adjacent, then $c \in C_L$. By symmetry, if $c$ is $R$-end or $R$-adjacent, $c \in C_R$. Hence, $C \subseteq C_L \cup C_R$.
\end{proof}

Let $C_1 = C_L \cap C_R$. Note that for every $s \in C$, if there exists an $s$-butterfly $P$ of length zero or one, then $s \in C_1$. Let $D = V(G) \setminus (H \cup C_L \cup C_R)$. The following lemmas show how to identify the vertices of $C \setminus C_1$.

\begin{lemma}
Let $S \subseteq C_R \cap R$ be a minimal subset of $C_R \cap R$ such that for every vertex $z \in (C_L \setminus C_R) \cap C$, there exists a path from $z$ to $S$ through $D$. Then, $|S| \leq 2$. Similarly, let $T \subseteq C_L \cap L$ be a minimal subset of $C_L \cap L$ such that for every vertex $z \in (C_R \setminus C_L) \cap C$, there exists a path from $z$ to $T$ through $D$. Then, $|T| \leq 2$.
\label{lem:shortest_paths}
\end{lemma}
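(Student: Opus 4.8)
The plan is to reprise the strategy of Lemma~\ref{lemma:first_catch}: assuming $|S| \ge 3$ I will extract an induced $3$-creature, which is impossible by Lemma~\ref{lemma:three_paths_lemma} since $G \in \C$. The bound $|T| \le 2$ will then follow by the symmetric argument, interchanging the roles of $L$ and $R$ (equivalently of $C_L$ and $C_R$).

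First I would pick three distinct $s_1,s_2,s_3 \in S$. By minimality of $S$, for each $i$ the set $S\setminus\{s_i\}$ lacks the defining property, so there is $z_i \in (C_L\setminus C_R)\cap C$ with no path to $S\setminus\{s_i\}$ through $D$; since $S$ itself has the property, $z_i$ has such a path to $S$, and I would take a shortest one $P_i = z_i \dd p^{(i)}_1 \dd \cdots \dd p^{(i)}_{k_i}$, so that $p^{(i)}_1,\dots,p^{(i)}_{k_i} \in D$. The anticompleteness clause in the definition of ``path through $D$'' forces the unique neighbour of $p^{(i)}_{k_i}$ in $S$ to be $s_i$ (otherwise $P_i$ itself is a path from $z_i$ to $S\setminus\{s_i\}$ through $D$), and shortness forces $z_i$ to miss $s_i$ whenever $k_i \ge 1$. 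A short argument shows $z_1,z_2,z_3$ are pairwise distinct: if $z_i = z_j$ with $i\ne j$, its path to $S$ reaches some $s_\ell$, and $\ell$ cannot avoid both $i$ and $j$, contradicting one of the two choices.

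Next I would build the creature. Set $A := H_L^* \cup \{x_1,x_2\}$ (so $C_L = N(A)$), let the matching partner of $z_i$ be $w_i := p^{(i)}_1$ if $k_i \ge 1$ and $w_i := s_i$ if $k_i = 0$, and set $B := (H_R^* \cup \{y_1,y_2\}) \cup \{\,s_i : k_i \ge 1\,\} \cup \bigcup_{i:\,k_i\ge 1}\{p^{(i)}_2,\dots,p^{(i)}_{k_i}\}$. Then I would verify the $3$-creature conditions one at a time, all of them transcriptions of the verification in Lemma~\ref{lemma:first_catch}: $G[A]$ is connected; $G[B]$ is connected because each $s_i \in C_R$ attaches to $H_R^* \cup \{y_1,y_2\}$ and the chain $p^{(i)}_{k_i}\dd\cdots\dd p^{(i)}_2$ hangs off $s_i$ via the edge $s_i p^{(i)}_{k_i}$; $A$ is anticomplete to $B$ since $A \subseteq L$, $H_R^* \cup \{y_1,y_2\} \subseteq R$, and every vertex of $D$ is non-adjacent to $A$ by the definition of $D$; each $z_i$ has a neighbour in $A$ (as $z_i \in C_L = N(A)$) and is anticomplete to $B$ (it misses $H_R^* \cup \{y_1,y_2\}$ as $z_i \notin C_R$, it misses every $s_j$ by the length-$0$/length-$\ge 1$ dichotomy, and it misses every $p^{(j)}_\ell$ with $\ell \ge 2$ --- for $j=i$ because $P_i$ is induced, for $j\ne i$ because an edge $z_i p^{(j)}_\ell$ would give $z_i$ a path to $s_j \in S\setminus\{s_i\}$ through $D$); each $w_i$ lies in $D$ or in $R$, hence is anticomplete to $A$, has a neighbour in $B$, and lies outside $A \cup B$; the four sets $A$, $B$, $\{z_1,z_2,z_3\}$, $\{w_1,w_2,w_3\}$ are pairwise disjoint; and finally $z_i w_i \in E(G)$ by construction while $z_i w_j \notin E(G)$ for $i\ne j$, again because an edge $z_i w_j$ would give $z_i$ a path to $s_j \in S\setminus\{s_i\}$ through $D$. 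Then $G[A \cup B \cup \{z_1,z_2,z_3\} \cup \{w_1,w_2,w_3\}]$ is a $3$-creature, contradicting Lemma~\ref{lemma:three_paths_lemma}; hence $|S| \le 2$.

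The hard part will not be any single step but the bookkeeping around degenerate configurations, in particular a path $P_i$ of length $0$ (i.e. $z_i$ adjacent to $s_i$): there $s_i$ must play the role of $w_i$ rather than be absorbed into $B$, and the disjointness $w_i \notin B$ must be argued separately --- it follows from $z_i \notin C_R$, which rules out $s_i \in H_R^* \cup \{y_1,y_2\}$. Beyond that, the argument is a direct adaptation of Lemma~\ref{lemma:first_catch}, with the $D$-paths $P_i$ playing the role of the butterfly wings.
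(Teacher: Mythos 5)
Your proposal is correct and follows the paper's argument essentially verbatim: extract $z_1,z_2,z_3$ from the minimality of $S$, use the anticompleteness clause in the definition of a path through $D$ to show $z_i$ has no edge to $P_j\cup\{s_j\}$ for $j\neq i$, and assemble a $3$-creature with $A=H_L^*\cup\{x_1,x_2\}$ and $B$ the union of $H_R^*\cup\{y_1,y_2\}$ with the tails of the paths, contradicting Lemma~\ref{lemma:three_paths_lemma}. The only difference is presentational: the paper folds your length-$0$ case into the uniform choice ``$z_i'=$ the neighbour of $z_i$ on $P_i$'' (with $P_i$ taken to end at $s_i$), whereas you treat $w_i=s_i$ separately; your extra care there is sound and, if anything, makes the degenerate case more transparent.
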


\begin{proof}
First, note that for every vertex $z \in (C_L \setminus C_R) \cap C$ there exists a path from $z$ to $C_R \cap R$ through $D$ given by a subpath of the right wing of a $z$-butterfly. Suppose $|S| > 2$ and let $s_1, s_2, s_3 \in S$. By the minimality of $S$, it follows that there exist $z_1, z_2, z_3 \in (C_L \setminus C_R) \cap C$ such that there exists a path $P_i$ from $z_i$ to $s_i$ through $D$ for $i = 1,2,3$, and there does not exist a path from $z_i$ to $s_j$ through $D$ for $1 \leq i \neq j \leq 3$. Let $z_1', z_2', z_3'$ be the neighbors of $z_1, z_2, z_3$ in $P_1, P_2, P_3$, respectively. Let $A = H_L^* \cup \{x_1, x_2\}$ and let $B = (P_1 \setminus \{z_1, z_1'\}) \cup (P_2 \setminus \{z_2, z_2'\}) \cup (P_3 \setminus \{z_3, z_3'\}) \cup (H_R^* \cup \{y_1, y_2\})$. Then, $A$ is anticomplete to $B$, $G[A]$ and $G[B]$ are connected, and for $i = 1, 2, 3$ $z_i$ has a neighbor in $A$ and is anticomplete to $B$, and $z_i'$ has a neighbor in $B$ and is anticomplete to $A$. It follows that $A \cup B \cup \{z_1, z_2, z_3\} \cup \{z_1', z_2', z_3'\}$ is a 3-creature, contradicting Lemma \ref{lemma:three_paths_lemma}.
\end{proof}

Let $S = \{r_a, r_b\}$ and $T = \{\ell_a, \ell_b\}$ be as in Lemma \ref{lem:shortest_paths} (possibly $r_a = r_b$ or $\ell_a = \ell_b$). Let $M_2(C) = (\ell_a, \ell_b, r_a, r_b)$. Let $C_2$ be the set of all vertices $c \in C_L$ such that there exists a path $P$ from $c$ to $\{r_a, r_b\}$ through $D$. Similarly, let $C_3$ be the set of all vertices $c\in C_R$ such that there exists a path $P$ from $c$ to $\{\ell_a, \ell_b\}$ through $D$. Note that $C_2$ and $C_3$ depend only on $H$, $W$, $C_L$, $C_R$, and $M_2(C)$.

\begin{lemma}
 $C_2 \cup C_3 \subseteq C$.
\label{lem:Y3_Y4_in_C}
\end{lemma}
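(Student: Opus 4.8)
The plan is to deduce both inclusions $C_2 \subseteq C$ and $C_3 \subseteq C$ directly from the definitions of $C_L, C_R, C_2, C_3$; these are symmetric under exchanging the roles of $L$ and $R$ (and correspondingly $C_L \leftrightarrow C_R$ and $\{r_a,r_b\} \leftrightarrow \{\ell_a,\ell_b\}$), so I would write out only the argument for $C_2 \subseteq C$.

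First I would record two elementary containments. Since $H$ is an $F$-hole, $H_L^* \subseteq L$, and since $X \subseteq N(H_L^*) \cap L$ we have $x_1, x_2 \in L$; as $L$ is a connected component of $G \setminus C$, every neighbor of a vertex of $L$ lies in $L \cup C$, so $C_L = N(H_L^* \cup \{x_1,x_2\}) \subseteq L \cup C$. Symmetrically, $C_R \subseteq R \cup C$, and in particular $\{r_a,r_b\} = S \subseteq C_R \cap R \subseteq R$. Next, by Lemma~\ref{lem:CL_cap_CR_in_C} we have $C \subseteq C_L \cup C_R$, so $D = V(G) \setminus (H \cup C_L \cup C_R)$ is disjoint from $C$; that is, $D \subseteq V(G) \setminus C$.

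Given these, the argument is short. Take $c \in C_2$ and suppose for contradiction that $c \notin C$; since $c \in C_L \subseteq L \cup C$, this forces $c \in L$. By definition of $C_2$ there is a path $P = p_0 \dd \hdots \dd p_m$ from $c$ to $\{r_a, r_b\}$ through $D$, so $p_0 = c$, $V(P) \setminus \{c\} \subseteq D$, and $p_m$ has a neighbor in $\{r_a, r_b\}$. Then $V(P) \subseteq \{c\} \cup D \subseteq V(G) \setminus C$, and since $P$ is connected and $c \in L$, the whole of $V(P)$ lies in the connected component $L$ of $G \setminus C$; in particular $p_m \in L$. But $p_m$ has a neighbor in $\{r_a,r_b\} \subseteq R$, contradicting the fact that $L$ and $R$ are distinct connected components of $G \setminus C$ (hence anticomplete to each other). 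Therefore $c \in C$, which gives $C_2 \subseteq C$, and $C_3 \subseteq C$ follows by the symmetric argument.

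I do not expect a genuine obstacle here. All the substantive content — that the minimal sets $X, Y, S, T$ have size at most two (via the $3$-creature lemma, Lemma~\ref{lemma:three_paths_lemma}), and that every $F$-light vertex of $C$ is $L$-end, $L$-adjacent, $R$-end, or $R$-adjacent (Theorem~\ref{theorem:4_final_thm}) — is already built into the definitions of $C_L, C_R, C_2, C_3$. The only steps requiring any care are (i) invoking Lemma~\ref{lem:CL_cap_CR_in_C} to guarantee that $D$ avoids $C$, and (ii) noting that a path that starts in $L$ and otherwise runs through $D \subseteq V(G) \setminus C$ must stay inside $L$ and so cannot end at a vertex adjacent to $R$; both are immediate consequences of $L$ and $R$ being distinct components of $G \setminus C$.
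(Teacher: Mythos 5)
Your proof is correct and follows essentially the same route as the paper's: both reduce to the observations that $C_L\subseteq L\cup C$, that $D$ is disjoint from $C$ by Lemma~\ref{lem:CL_cap_CR_in_C}, and that a path from $c$ to $\{r_a,r_b\}\subseteq R$ through $D$ would otherwise connect $L$ to $R$ inside $G\setminus C$. The only cosmetic difference is that you run the argument by contradiction (placing $c$ in $L$ and tracking the whole path), whereas the paper argues directly that the only vertex of the path that could lie in $C$ is $c$ itself.
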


\begin{proof}
Suppose $c \in C_L$ such that there exists a path $P$ from $c$ to $\{r_a, r_b\}$ through $D$. Since $c \in C_L$, $c$ has a neighbor in $L$, so some vertex of $P$ belongs to $C$. Since, by Lemma \ref{lem:CL_cap_CR_in_C}, $C \subseteq C_L \cup C_R$, no vertex of $P \setminus \{c\}$ is in $C$. It follows that $c \in C$. Therefore, $C_2 \subseteq C$. By symmetry, $C_3 \subseteq C$.
\end{proof}

\begin{lemma}
$C = C_1 \cup C_2 \cup C_3$. In particular, $C$ is uniquely determined by $F$, $M_1(C)$, and $M_2(C)$.
\end{lemma}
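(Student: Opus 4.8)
The plan is to prove the set equality by two inclusions and then read off the uniqueness claim directly from the construction. The inclusion $C_1 \cup C_2 \cup C_3 \subseteq C$ is immediate: $C_1 = C_L \cap C_R \subseteq C$ by Lemma~\ref{lem:CL_cap_CR_in_C}, and $C_2 \cup C_3 \subseteq C$ by Lemma~\ref{lem:Y3_Y4_in_C}.

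For $C \subseteq C_1 \cup C_2 \cup C_3$, I would fix $c \in C$ and split into cases. If $c \in \{c_1, c_2\}$, then $c \in C_L \cap C_R = C_1$. If $c$ is $F$-heavy, then by Lemma~\ref{lemma:potential_depends_on_frame} it is $(c_1, c_2)$-heavy with respect to the $F$-hole $H$, hence has a neighbor in each of $H_L^*$ and $H_R^*$, so again $c \in C_1$. So assume $c$ is $F$-light and $c \notin \{c_1, c_2\}$, and pick a $c$-butterfly $P$ (one exists since $L$ and $R$ are full components for $C$). By Theorem~\ref{theorem:4_final_thm}, $c$ is not a central vertex of $P$, hence $c$ is $L$-end, $L$-adjacent, $R$-end, or $R$-adjacent. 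If $c$ is $L$-end, it has a neighbor in $H_L^*$; if $c$ is $L$-adjacent, then $c$ has a neighbor in $\{x_1, x_2\}$ by Lemma~\ref{lemma:first_catch}; either way $c \in C_L$. Symmetrically, if $c$ is $R$-end or $R$-adjacent then $c \in C_R$. Thus $c \in C_L \cup C_R$. If $c \in C_L \cap C_R$, then $c \in C_1$. Otherwise, say $c \in (C_L \setminus C_R) \cap C$; then the defining property of $S = \{r_a, r_b\}$ from Lemma~\ref{lem:shortest_paths} yields a path from $c$ to $\{r_a, r_b\}$ through $D$, so $c \in C_2$ --- and symmetrically $c \in (C_R \setminus C_L) \cap C$ gives $c \in C_3$. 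In every case $c \in C_1 \cup C_2 \cup C_3$, which proves the equality.

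For the uniqueness statement, I would observe that the $F$-hole $H$ is obtained from $F$ by the deterministic construction of Lemma~\ref{lemma:construct_H}, which itself only uses the set $W = W(F)$ of Lemma~\ref{lemma:construct_Y1}, a function of $F$ alone. Given $H$, the sets $C_L = N(H_L^* \cup \{x_1, x_2\})$ and $C_R = N(H_R^* \cup \{y_1, y_2\})$ are determined by $H$ and $M_1(C) = (x_1, x_2, y_1, y_2)$; then $C_1 = C_L \cap C_R$ and $D = V(G) \setminus (H \cup C_L \cup C_R)$ are determined; and $C_2$, $C_3$ are, respectively, the vertices of $C_L$ joined to $\{r_a, r_b\}$ by a path through $D$ and the vertices of $C_R$ joined to $\{\ell_a, \ell_b\}$ by a path through $D$, so they are determined by $C_L$, $C_R$, $D$, and $M_2(C) = (\ell_a, \ell_b, r_a, r_b)$. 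Hence $C = C_1 \cup C_2 \cup C_3$ depends only on $F$, $M_1(C)$, and $M_2(C)$.

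This is essentially an assembly of the preceding lemmas rather than a new argument, so I do not expect a genuine obstacle. The one point that needs care is that the case split for $F$-light vertices be exhaustive --- which is exactly the content of Theorem~\ref{theorem:4_final_thm} --- and that an $L$-adjacent vertex be certified to lie in $C_L$ via its neighbor in $\{x_1, x_2\}$ (using Lemma~\ref{lemma:first_catch}) rather than only via $H_L^*$; a vertex that is simultaneously, say, $L$-adjacent for one butterfly and $R$-adjacent for another simply lands in $C_L \cap C_R = C_1$, which is harmless.
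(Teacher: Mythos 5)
Your proposal is correct and follows essentially the same route as the paper: both directions are assembled from Lemmas~\ref{lem:CL_cap_CR_in_C} and~\ref{lem:Y3_Y4_in_C} together with Lemma~\ref{lem:shortest_paths} for the vertices in $(C_L \setminus C_R) \cap C$ and $(C_R \setminus C_L) \cap C$. The only difference is cosmetic: you re-derive the inclusion $C \subseteq C_L \cup C_R$ from Theorem~\ref{theorem:4_final_thm} and Lemma~\ref{lemma:first_catch}, whereas the paper simply cites Lemma~\ref{lem:CL_cap_CR_in_C}, which already contains that argument.
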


\begin{proof}By Lemmas \ref{lem:CL_cap_CR_in_C} and \ref{lem:Y3_Y4_in_C}, it follows that $C_1 \cup C_2 \cup C_3 \subseteq C$. Consider $c \in C$. We may assume $c \not \in C_1$. Then, by Lemma \ref{lem:CL_cap_CR_in_C}, either $c \in (C_L \setminus C_R) \cap C$ or $c \in (C_R \setminus C_L) \cap C$. If $c \in (C_L \setminus C_R) \cap C$, it follows from Lemma \ref{lem:shortest_paths} that there is a path $P$ from $c$ to $\{r_a, r_b\}$, so $c \in C_2$. Similarly, if $c \in (C_R \setminus C_L) \cap C$, then $c \in C_3$. Therefore, $C \subseteq C_1 \cup C_2 \cup C_3$.
\end{proof}

Let $C(F, M_1(C), M_2(C)) = C_1 \cup C_2 \cup C_3$ be the set constructed from $F$, $M_1(C)$, and $M_2(C)$, as described in this section. We proved that if $C$ is a proper separator and $F$ is an optimal frame of $C$, then $C(F, M_1(C), M_2(C)) = C$. The following corollary is a summary of the results presented in Section \ref{sec:constructing_separators} so far.

\begin{corollary}
Given the tuples $F$, $M_1$, and $M_2$, one can construct $C(F, M_1, M_2)$ in polynomial time. Further, if $F$ is an optimal frame of $C$, then $C(F, M_1(C), M_2(C)) = C$.
\label{corollary:sec4_summary}
\end{corollary}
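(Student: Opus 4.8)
The plan is to package the constructive lemmas of this section into a single algorithm, bound the running time of each step, and then invoke the structural results for the equality $C(F,M_1(C),M_2(C))=C$.

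For the algorithmic claim, I would run the pipeline exactly as set up in the section. Given $F=(c_1,c_2,\ell_1',\ell_1,r_1,r_1',\ell_2',\ell_2,r_2,r_2')$: first compute $W=W(F)$, which by Lemma~\ref{lemma:construct_Y1} amounts to computing the $(c_1,c_2)$-strong vertices of $G_F$ and then of $G_F\setminus X_1$; each $(c_1,c_2)$-strong test is a single connectivity check, so $W$ is obtained in $\mathcal{O}(|V(G)|^3)$ time. Next build the $F$-hole $H$ as in Lemma~\ref{lemma:construct_H}, namely the union of $V(F)$ with a shortest $\ell_1'$-to-$\ell_2'$ path and a shortest $r_1'$-to-$r_2'$ path through $G_F\setminus W$, each found by breadth-first search. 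Given $M_1=(x_1,x_2,y_1,y_2)$, set $C_L=N(H_L^*\cup\{x_1,x_2\})$, $C_R=N(H_R^*\cup\{y_1,y_2\})$, $C_1=C_L\cap C_R$, and $D=V(G)\setminus(V(H)\cup C_L\cup C_R)$ in linear time. Given $M_2=(\ell_a,\ell_b,r_a,r_b)$, let $C_2$ be the set of $c\in C_L$ with a path from $c$ to $\{r_a,r_b\}$ through $D$ and $C_3$ the set of $c\in C_R$ with a path from $c$ to $\{\ell_a,\ell_b\}$ through $D$; each membership is decided by one breadth-first search in $G[C_L\cup D\cup\{r_a,r_b\}]$, respectively $G[C_R\cup D\cup\{\ell_a,\ell_b\}]$, so this step is $\mathcal{O}(|V(G)|^3)$. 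Output $C(F,M_1,M_2)=C_1\cup C_2\cup C_3$; the whole procedure is polynomial.

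For the correctness claim, I would let $C$ be a proper separator with full components $L,R$ and $F$ an optimal $(C,c_1,c_2)$-frame, and feed the procedure the tuples $M_1(C)=(x_1,x_2,y_1,y_2)$ from Lemma~\ref{lemma:first_catch} and $M_2(C)=(\ell_a,\ell_b,r_a,r_b)$ from Lemma~\ref{lem:shortest_paths}. Lemma~\ref{lemma:construct_H} guarantees the hole $H$ produced is an $F$-hole, so the sets $C_L,C_R,C_1,D,C_2,C_3$ produced by the algorithm are exactly the ones analysed in Lemmas~\ref{lem:CL_cap_CR_in_C}, \ref{lem:Y3_Y4_in_C}, and the final lemma of this section, whence $C(F,M_1(C),M_2(C))=C_1\cup C_2\cup C_3=C$.

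The step I expect to need the most care is making explicit that $M_1(C)$ and $M_2(C)$ are legitimate inputs, i.e.\ that $x_1,x_2,y_1,y_2$ and $\ell_a,\ell_b,r_a,r_b$ are honest tuples of vertices (obtained as minimal hitting sets of the $L$-adjacent/$R$-adjacent vertices, respectively of the $(C_L\setminus C_R)\cap C$/$(C_R\setminus C_L)\cap C$ vertices, via $D$-paths) so that Lemmas~\ref{lemma:first_catch} and~\ref{lem:shortest_paths} apply. Crucially, the algorithm itself does not need to compute these tuples: it is simply handed $M_1,M_2$ as parameters, and in the next section one enumerates over all $\mathcal{O}(|V(G)|^8)$ candidate pairs. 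So for the corollary as stated it suffices to exhibit, for the true separator $C$, one pair $(M_1,M_2)$ --- namely $(M_1(C),M_2(C))$ --- for which the construction returns $C$, and that is exactly what the cited lemmas give. I do not anticipate any genuinely hard obstacle here; the content is entirely in the earlier structural lemmas, and the corollary is their bookkeeping consequence.
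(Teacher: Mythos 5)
Your proposal is correct and matches the paper's intent exactly: the paper offers no separate proof of this corollary, presenting it explicitly as a summary of Lemmas~\ref{lemma:construct_Y1}--\ref{lem:Y3_Y4_in_C} and the final lemma $C=C_1\cup C_2\cup C_3$, which is precisely the pipeline and bookkeeping you describe. Your closing observation --- that the algorithm is merely handed $M_1,M_2$ as parameters and one only needs the existence of a good pair for the true $C$ --- is also how the paper uses the corollary in Theorem~\ref{thm:main_thm2}.
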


Finally, we prove Theorem \ref{thm:main_thm}, which we restate here for convenience. Recall that by \cite{BBC}, to construct a list of all minimal separators of a graph, it suffices to prove that it has polynomially many minimal separators. We prove that $\C$ has the polynomial separator property and provide in addition a polynomial-time algorithm to construct the minimal separators of graphs in $\C$, which follows naturally from the results in this section. 

\begin{theorem}
Let $G \in \C$. One can construct a set $\mathcal{S}$ of size at most $|V(G)|^{18}$ in polynomial time such that $\mathcal{S}$ is the set of all minimal separators of $G$.
\label{thm:main_thm2}
\end{theorem}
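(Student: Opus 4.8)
The plan is a direct enumeration argument: by the time we reach this statement all of the structural work is done, and what remains is to exploit Corollary~\ref{corollary:sec4_summary} and Lemma~\ref{lem:clique_sep} to list all minimal separators efficiently, and then to count the parameters involved. First I would split the minimal separators of $G$ into the clique ones and the proper ones. For the clique separators, Lemma~\ref{lem:clique_sep} already produces the list $\mathcal{S}_1$ of all of them, of size $\mathcal{O}(|V(G)|)$, in polynomial time. For the proper separators, the idea is to brute-force over all triples $(F, M_1, M_2)$, where $F$ ranges over $10$-tuples of vertices of $G$ (candidate frames) and $M_1, M_2$ range over $4$-tuples of vertices of $G$ (candidates for the tuples $M_1(C), M_2(C)$ of Section~\ref{sec:constructing_separators}). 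For each such triple, Corollary~\ref{corollary:sec4_summary} lets us construct $C(F, M_1, M_2)$ in polynomial time; I would then test, also in polynomial time (compute the connected components of $G \setminus C(F, M_1, M_2)$ and check that at least two of them have neighborhood exactly $C(F, M_1, M_2)$), whether this set is a minimal separator, and collect into $\mathcal{S}_2$ all those that are. Finally set $\mathcal{S} = \mathcal{S}_1 \cup \mathcal{S}_2$.

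Correctness in one direction is immediate: every member of $\mathcal{S}$ is a minimal separator by construction. For the converse, let $C$ be any minimal separator; if $C$ is a clique then $C \in \mathcal{S}_1$. Otherwise $C$ is proper, and I would first observe that $C$ admits an optimal frame: taking a shortest path through each of the two full components of $C$ between a non-adjacent pair of vertices of $C$ yields a $(C, c_1, c_2)$-hole, hence a $(C, c_1, c_2)$-frame; choosing the pair appropriately (a long pair if $C$ is rich, a pair maximizing $\dist(c_1, c_2)$ if $C$ is poor) and noting that potential and distance take only finitely many values, one gets an optimal frame $F$ of $C$. By Corollary~\ref{corollary:sec4_summary}, $C = C(F, M_1(C), M_2(C))$ for this $F$, and since the triple $(F, M_1(C), M_2(C))$ is among those enumerated and $C$ is a minimal separator, $C \in \mathcal{S}_2$. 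Hence $\mathcal{S}$ is exactly the set of minimal separators of $G$. The running time is polynomial: there are at most $|V(G)|^{10}\cdot|V(G)|^{4}\cdot|V(G)|^{4} = |V(G)|^{18}$ triples, each processed in polynomial time, plus the $\mathcal{O}(|V(G)||E(G)|)$ cost of Lemma~\ref{lem:clique_sep}.

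For the size bound, $|\mathcal{S}| \le |\mathcal{S}_1| + |\mathcal{S}_2| \le \mathcal{O}(|V(G)|) + |V(G)|^{18}$. To reach the clean bound $|V(G)|^{18}$ I would note that the underlying vertex sets $\{x_1, x_2\}$, $\{y_1, y_2\}$, $\{\ell_a, \ell_b\}$, $\{r_a, r_b\}$ occurring in $M_1$ and $M_2$ each have size at most two, so these tuples may be taken unordered, which gives $|\mathcal{S}_2| \le \binom{|V(G)|}{10}\binom{|V(G)|}{4}^{2}$; a short calculation then absorbs the $\mathcal{O}(|V(G)|)$ clique term, the finitely many small graphs being handled by the crude observation that an $n$-vertex graph has at most $2^{n}$ minimal separators.

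I do not expect a genuine obstacle in this final step: the substance of the theorem is entirely packed into Corollary~\ref{corollary:sec4_summary} and, behind it, Theorem~\ref{theorem:star_cutset}, Lemma~\ref{lemma:three_paths_lemma}, and the analysis of butterflies in Section~\ref{sec:structure_proper}. The only points requiring minor care here are confirming that every proper separator really does admit an optimal frame (so that Corollary~\ref{corollary:sec4_summary} is applicable), which is the shortest-path observation above, and the arithmetic $10 + 4 + 4 = 18$ that pins down the exponent.
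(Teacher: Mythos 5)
Your proposal is correct and follows essentially the same route as the paper: enumerate all $18$-tuples $T$, build $C(F^T,M_1^T,M_2^T)$ via Corollary~\ref{corollary:sec4_summary}, test minimality, and add the clique separators from Lemma~\ref{lem:clique_sep}. Your extra care about the existence of an optimal frame and about absorbing the $\mathcal{O}(|V(G)|)$ clique-separator term into the $|V(G)|^{18}$ bound only makes explicit what the paper leaves implicit.
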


\begin{proof}
Let $\mathcal{S} = \{\}$. By Lemma \ref{lem:clique_sep}, we add to $\mathcal{S}$ all minimal clique separators of $G$. Next, we list the proper separators of $G$. Let $T = (c_1, c_2, \ell_1', \ell_1, r_1, r_1', \ell_2', \ell_2, r_2, r_2', x_1, x_2, y_1, y_2, \ell_a, \ell_b, r_a, r_b)$ be an 18-tuple consisting of vertices in $V(G)$. Let $F^T = (c_1, c_2, \ell_1', \ell_1, r_1, r_1', \ell_2', \ell_2, r_2, r_2')$, $M_1^T = (x_1, x_2, y_1, y_2)$, and $M_2^T = (\ell_a, \ell_b, r_a, r_b)$. For every 18-tuple $T$, let $C^T = C(F^T, M_1^T, M_2^T)$. By Corollary \ref{corollary:sec4_summary}, $C^T$ can be constructed in polynomial time. We can test in time $\mathcal{O}(|E(G)||V(G)|)$ whether $C^T$ is a minimal separator of $G$. We add $C^T$ to $\mathcal{S}$ if and only if $C^T$ is a minimal separator of $G$. Clearly, $\mathcal{S}$ has size at most $|V(G)|^{18}$ and can be constructed in polynomial time.

It remains to show that $\mathcal{S}$ contains every minimal separator of $G$. Let $C$ be a minimal separator of $G$. We may assume that $C$ is proper. Let $F$ be an optimal frame of $C$ and let $T$ be the 18-tuple given by the union of $F$, $M_1(C)$, and $M_2(C)$, in that order. It follows from Corollary \ref{corollary:sec4_summary} that $C^T = C$, so $C \in \mathcal{S}$. 
\end{proof}


\end{document}